\title{\textbf{A Norton-Hoff model with an elastic and inelastic constitutive relationships dependent on temperature}}
\author{
\textbf{Sebastian Owczarek}\\[0.5ex] 
\textbf{\footnotesize{Faculty of Mathematics and
 Information Science, Warsaw University of Technology,}}\\[-1ex]
\textbf{\footnotesize{ul. Koszykowa 75, 00-662 Warsaw, Poland}}\\[-1ex]
\textbf{\footnotesize{E-Mail: s.owczarek@mini.pw.edu.pl}}}
\date{}
\newtheorem{tw}{Theorem}[section]
\newtheorem{lem}[tw]{Lemma}
\newtheorem{de}[tw]{Definition}
\newtheorem{col}[tw]{Corollary}
\newtheorem{remark}[tw]{Remark}
\newtheorem{concl}[tw]{Conclusion}
\DeclareMathOperator{\dev}{dev}
\DeclareMathOperator{\sym}{sym}
\DeclareMathOperator{\dyw}{div}
\begin{document}
\maketitle
\begin{abstract}
The aim of this paper is to prove the existence of weak solution for a quasi-static evolution of thermo-visco-elastic model with Norton-Hoff law of plasticity. The dependence on temperature occurs both in the elastic constitutive equations (generalised Hooke's law) and in describing the evolution of visco-elastic strain. These thermal effects have not been previously considered. The approximations of the considered models did not allow in literature such a general models. The main idea of the article is the revocation to R. Temam articles on the plasticity from eighties of the previous century and to write down the equations related to the plastic deformations in the same way. For the obtained equations we propose approximations in a flow rule. Thanks to this manner of writing the equations, we show the existence of a weak solution.

To characterize the weak limits in nonlinearities occurring in the system at different levels of approximation,  the following tools are used: Young measures, Boccardo's and Gallou{\"e}t's approach, truncation methods for heat equation used by D. Blanchard and Minty-Browder trick.
\end{abstract}
\newcommand{\bl}{\backslash}
\newcommand{\nn}{\nonumber}
\newcommand{\KK}{\sigma_{\rm y}}
\newcommand{\ve}{\varepsilon}
\newcommand{\ep}{\epsilon}
\newcommand{\R}{{\mathbb R}}  
\newcommand{\D}{{\mathbb C}}
\newcommand{\T}{{\mathfrak T}}
\newcommand{\TC}{{\mathcal T}}
\newcommand{\E}{{\cal E}}
\newcommand{\K}{{\cal K}}
\newcommand{\di}{{\mathrm d}}
\renewcommand{\S}{{\cal S}^3}
\renewcommand{\SS}{{\cal S}^3_{\mathrm{dev}}}
\newcommand{\id}{ {1\!\!\!\:1 } }
\def\div{\rm{div\,}}
\section{Introduction}
\subsection{Description of the problem}
\renewcommand{\theequation}{\thesection.\arabic{equation}}
\setcounter{equation}{0}%
In this article, we take into account the model describing inelastic strains in solids, which are subject to the effects of temperature. The mathematical model describing such dynamics consists of the classical continuum mechanical models (see for example \cite{Alber,temam}) and a nonlinear heat equation derived from the first law of thermodynamics. From mathematical point of view a nonzero thermal expansion of material makes the system very complicated and the mathematical analysis of equations describing such process is one of the challenging problems in applied mathematics. One of the main reasons is that the nonlinearities occurring in the system are only integrable functions and the standard energy methods do not work. By analyzing the literature (which we will cover in detail later) we can conclude that the original model is not satisfactorily researched from mathematical point of view. In the considered models, simplifications are made which allow to obtain results concerning the existence of very weak solutions. Here, we study a system that in the presented generality was not considered. For such a system, we use our proposed approximations, thanks to which we show the existence of classic weak solutions. The approach presented in this article is completely new and has not been used in in the framework under consideration.

 Let us assume the body occupies the bounded domain $\Omega\subset\R^3$ with smooth boundary $\partial\Omega$ and  fix a positive real number $\T>0$. The dynamics of inelastic strains in solids can be described in two ways. The standard unknowns in this theory are  the displacement field $u:\Omega\times[0,\T]\rightarrow \R^3$ and the stress tensor $\sigma:\Omega\times[0,\T]\rightarrow \S$, where $\S$ denotes the set of symmetric $3\times3$-matrices. A very popular description is described in Alber's book \cite{Alber}, which was very often used in recent years, see for example \cite{AlberChelminski1, chegwia2, ChelminskiOwczarekthermoII, GKS15, GKO, Roubicek}. In this approach the inelastic part of the infinitesimal strain tensor $\ve(u)=\mathrm{sym}(\nabla_x u)$ is described by an additional internal variable $\varepsilon^p$ i.e.
$$\ve(u)=(\ve(u)-\varepsilon^p)+\varepsilon^p$$ 
and the first part on the right-hand side describes the pure elastic deformations ($\mathrm{sym}(\nabla_x u)$ denotes the symmetric part of the gradient of the displacement). Then, the elastic constitutive equation takes the form of a generalized Hooke's law:
$$\sigma = \mathbb{C}(\ve(u)-\varepsilon^p)\,,$$
where the operator $\mathbb{C}:\S\rightarrow\S$ is a classical $4^{\mathrm{th}}$ order 
elasticity tensor. The inelastic constitutive equation is given in the form of evolution equation for $\varepsilon^p$
\begin{equation}
\label{fl}
\ve^p_t=\, G(\sigma,\ve^p)\,,
\end{equation}
where $G$ is a given constitutive function (in general $G$ may be a multifunction) and $\ve_t^p$ denotes the time derivative of the tensor $\ve_t^p$. In the literature we can find various examples of the function $G$ (for instance \cite{Alber,chegwia1,owcz2,ChelNeffOwczarek14} and many others). Observe that selection of the vector fields $G$ lead to different models.

Here, we are going to use a different description that was used by Roger Temam in the book \cite{temam} and also in the work \cite{temam1, Bensoussan1996,Bensoufrehsebook,Iosofonea}. This approach does not introduce an additional unknown $\varepsilon^p$. It is based only on standard unknowns $u$ and $\sigma$. Assuming small deformations and taking into account a special density of external forces acting on the material (a dumping term), the slow motion of the body is governed by the classical balance of momentum 
\begin{equation}
\label{BM}
\mathrm{div}(\sigma)=-F-\mathrm{div}\,\D (\ve(u_t))\,,
\end{equation}
where the acceleration term is omitted since we consider a quasi-static problem. The function $F:\Omega\times[0,\T]\rightarrow \R^3$ in \eqref{BM} describes the applied body forces. The general equations \eqref{BM} must be completed by constitutive relations. The body is subject to thermal expansion, therefore the Cauchy stress tensor consists of two stresses
\begin{equation}
\label{CE}
\sigma = T-f(\theta)\,\id\,,
\end{equation}
where $T:\Omega\times[0,\T]\rightarrow \S$ is an elastic one, the second one is the thermal stress and $\theta:\Omega\times [0,\T]\rightarrow \R$ is the temperature of the material. The function $f\colon\R\to \R$ is a nonlinear given constitutive function depending on considered material. In this paper we deal with the isotropic materials. This means that the 4th order tensor of elastic constants $\D$ is defined by the Lam\'e constants  $\lambda$ and $\mu$ i.e.
\begin{equation}
\label{CE1}
\D_{ijkl}=\mu(\delta_{ik}\delta_{jl}+\delta_{il}\delta_{jk})+\lambda \delta_{ij}\delta_{kl}\,,
\end{equation}
where $\delta$ denotes the Kronecker symbol. Then, the elastic constitutive equation takes the form of a generalized Hooke's law:
\begin{equation}
\label{CE2}
T = 2\mu\ve(u)+\lambda\mathrm{tr}(\ve(u))\id\,.
\end{equation}
The assumptions $\mu>0$ and $3\lambda+2\mu>0$ means that the deformation of elastic energy is positive definite and they imply that the inverse operator of $\D$ exists. Hence, inverting the relation \eqref{CE2} we obtain  
\begin{equation}
\label{CE3}
\ve(u)=\D^{-1}T\,,
\end{equation}
where $\D^{-1}:\S\rightarrow\S$ is a positive definite operator which leads to
\begin{equation}
\label{CE4}
\ve(u) = \frac{1}{2\mu} T-\frac{\lambda}{2\mu(2\mu+3\lambda)}\mathrm{tr}(T)\id\,.
\end{equation}
In this paper we give a viscous-elastic model. The viscous properties are associated with inelastic deformation. Then the inelastic constitutive relation is in the form   
\begin{equation}
\label{ICE}
\D^{-1}T_t+ G(T,\theta)=\ve(u_t)\,,
\end{equation}
where $G$ is a given constitutive function, which in our model depends on $T$ and $\theta$. We are going to deal with a generalization of the Norton-Hoff type constitutive function which is known and very popular in the literature, see \cite{temam1, GKO,ChelminskiOwczarekthermoII,GKS15,Bensoussan1996}. More precisely, we replace equation \eqref{ICE} by
\begin{equation}
\label{ICE1}
\D^{-1} T_t+\big\{|\dev(T)|-\beta(\theta)\big\}_{+}^{r}\,\frac{\dev(T)}{|\dev(T)|}=\ve(u_t)\,,
\end{equation}
where  $\dev (T)=T-\frac{1}{3}\,\mathrm{tr}\,(T)\cdot\id$ denotes the deviator of  a $3\times 3$-tensor. The symbol $\{\xi\}_{+}$ denotes the nonnegative part of the real number $\xi$. The function $\beta:\R\rightarrow\R$ is given and depends on the material under consideration, while $r>1$ is a fixed number. The equations \eqref{BM}, \eqref{CE}, \eqref{CE2} and \eqref{ICE1} are completed by the heat equation on the temperature function $\theta$. It is a consequence of the first principle of thermodynamics, hence the heat transfer is governed by the equation
\begin{equation}
\label{HT}
\theta_t-\Delta\theta+f(\theta)\mathrm{div}\,u_t=\big\{|\dev(T)|-\beta(\theta)\big\}_{+}^{r}|\dev(T)|\,.
\end{equation}
In conclusion, the problem that we study in this article is: for a fix positive real number $\T>0$, find the displacement field $u:\Omega\times[0,\T]\rightarrow \R^3$, the stress tensor $T:\Omega\times[0,\T]\rightarrow \S$ and the temperature of the material $\theta:\Omega\times [0,\T]\rightarrow \R$ solution of the following system of equations
\begin{equation}
\label{Main}
\begin{split}
\mathrm{div}(T-f(\theta)\id )&=-F-\mathrm{div}\,\D (\ve(u_t))\,,\\[1ex]
\D^{-1} T_t+\big\{|\dev(T)|-\beta(\theta)\big\}_{+}^{r}\,\frac{\dev(T)}{|\dev(T)|}&=\ve(u_t)\,,\\[1ex]
\theta_t-\Delta\theta+f(\theta)\mathrm{div}\,u_t&=\big\{|\dev(T)|-\beta(\theta)\big\}_{+}^{r}|\dev(T)|\,,
\end{split}
\end{equation}
where the tensor $\D$ is defined in \eqref{CE1}. The equations in \eqref{Main} are studied for $x\in\Omega$ and time $t\in [0,\T]$.

The system (\ref{Main}) is considered with the nonhomogeneous Dirichlet boundary condition for the displacement and with the nonhomogeneous Neumann boundary condition for the temperature
\begin{equation}
\label{BC}
\begin{split}
u(x,t)&=g_D(x,t)\quad\, \textrm{ for}\quad x\in \partial \Omega \quad\textrm{and}\quad  t\geq 0\,,\\[1ex]
\frac{\partial\,\theta}{\partial\,n}(x,t)&=g_{\theta}(x,t)\quad\,\,\, \textrm{ for}\quad x\in \partial \Omega \quad\textrm{and}\quad  t\geq 0\,.
\end{split}
\end{equation}
Finally, we adjoin to the system (\ref{Main}) the following initial conditions
\begin{equation}
\label{IC}
u(x,0)=u_0(x),\quad T(x,0)=T_0(x),\quad \theta(x,0)=\theta_0(x)\,.
\end{equation}
There are many different possible ways to deal with problems in which the thermal effects are included. However, none of them guarantee solvability of the problem in the general case. One can linearize the system of equations i.e. $f(\theta)=c_\theta(\theta-\theta_0)\,,$ while the heat equation takes the form
\[
\theta_t - \Delta\theta +c_\theta(\theta-\theta_0)\, \mathrm{div} u_t=r\,.
\]
The nonlinear term is usually approximated in the linear theory by a linear term
$c_0\dyw u_t$ with the physical argument that the temperature in the
deformation process remains close to the reference temperature, see e.g. \cite{KlaweOwczarek,GKO,Bartczak12,Haupt}. The first mathematical analysis of thermoelasticity including the nonlinear heat equation was done in \cite{BlanchardGuibe97} and later in \cite{BlanchardGuibe00}. The authors consider the visco-elastic \emph{Kelvin-Voigt
model}, i.e.\ they assume that the constitutive relation is of the form
\begin{equation}
	\sigma=A\sym(\nabla u) +B\sym(\nabla u_t)-f(\theta)\,\id\,,\nn
\end{equation}
where $A,B$ are symmetric and positive definite linear operators acting on
symmetric matrices and $f$ is a constitutive function describing the
thermal part of the stress tensor. The additional term $B\sym(\nabla u_t)$ in the constitutive relation allows to control the very difficult term $f(\theta)\dyw u_t$ in the
heat equation. Under the fundamental assumption of a
sublinear growth of the function $f$ the authors have proven the existence of \emph{renormalized    
solutions} of the considered system of equations. The other approach is to add additional dumping term into right-hand side of momentum equation, see \cite{ChelminskiOwczarekthermoI,ChelminskiOwczarekthermoII,barowcz2}. This term also helps to control the difficult term in the heat equation. Moreover, in these articles the coupling with thermal effects occurs only in an elastic constitutive relationship (the constitutive function $G$ does not depend on $\theta$).

Additionally, it is worth to emphasize the works \cite{GKS15,GwiazdaKlaweSwierczewska014,tve-Orlicz} and \cite{KlaweOwczarek,GKO}, where the authors deal with thermo-visco-elasticity systems. However, in these works the thermal expansion does not appear, which means that the Cauchy stress tensor does not depend on temperature function. Coupling between temperature and displacement occur only in the inelastic constitutive equation. An important issue is the fact that in the systems considered by Gwiazda at al. the total energy is conserved, contrary to the systems analysed in \cite{Haupt,ChelRacke,Bartczak14,Bartczak12} in which the lack of the total energy is observed. This is caused by the linearisation. The temperature occurring in nonlinear dissipation term of heat equation is only linearised (without any linearision of the Cauchy stress tensor).

Another point of view on thermo-mechanical problems is presented by S. Bartels and T.~ Roub{\'{\i}}{\v{c}}ek (see for example \cite{BarlesRoubicek} and \cite{Roubicek}) where the authors use, the so called, enthalpy transformation and consider energetic solutions. In both of this papers the authors study Kelvin-Voigt viscous material, but in the article \cite{BarlesRoubicek} they consider a plasticity model with hardening in quasistatic case, while in  \cite{Roubicek} the perfect plasticity in dynamical case is considered.

We emphasize that in the present article we consider a model in which coupling with thermal effects occurs both: in the generalized Hooke's law (equation \eqref{CE}) and in the inelastic constitutive equation \eqref{ICE1}. The total energy of the system \eqref{Main} is also conserved.
\subsection{Main assumptions and main result}
The dissipation term $f(\theta)\dyw u_t$ on the right-hand side of the heat equation $\eqref{Main}_3$ is expected in the space $L^1(\Omega\times (0,\T))$. It is known that, in general, for integrable data a weak solution might not exist.  From articles \cite{BoccardoGallouet}, \cite{BlanchardMurat} and \cite{dall96} we are able to deduce that a solution of heat equation with $L^1$-data is expected as a function from $L^p((0,\T)\times\Omega)$ for $p<\frac{N+2}{N}$, where $N$ denotes the dimension of a space in which the heat  equation is considered. For this reason, in this  paper it is assumed that the function $f:\R\rightarrow\R$ is continuous, satisfies the following growth condition
\begin{equation}
\label{warwzrostu}
|f(r)|\leq a+B|r|^{\alpha}\qquad \mathrm{for\,all}\qquad r\in\R_{+}\,,\,\, a,\,B\geq 0\,\,\, \mathrm{and}\,\,\, \alpha\in\Big(\frac{1}{2},\frac{5}{6}\Big)
\end{equation}
and there exists constant $\tilde{B}>0$ such that
\begin{equation}
\label{warwzrostu1}
|f(r)|\leq \tilde{B}(1+|r|)^{\frac{1}{2}}\qquad \mathrm{for\,all}\qquad r\in\R_{-}\,.
\end{equation}
 The above growth assumption on the function $f$ were first used in the articles  \cite{BlanchardGuibe97}, \cite{BlanchardGuibe00} and then also in \cite{ChelminskiOwczarekthermoII}, \cite{barowcz2}.

Additionally let assume that the function $\beta:\R\rightarrow\R$ satisfies the following conditions:
\begin{description}
\item[(C1)] $\beta\in C^1(\R;\R)$,
\item[(C2)] there exists $d>0$ such that $\beta(r)\in [0,d]$ for all $r\in\R$,
\item[(C3)] there exists $\tilde{d}>0$ such that $|\beta'(r)|\in [0,\tilde{d}]$ for all $r\in\R$.
\end{description}
Recall that the Prandtl-Reuss inelastic flow rule with the von Mises yield function is in the form \cite{Mises1913,Lionsfr}
\begin{equation}
\label{P-R}
\ve(u_t)-\D^{-1} T_t\in \partial  I_{K}(T) \,,
\end{equation}
where the set of admissible elastic stresses $K$ is defined in the following form 
$$K  = \{T \in \S : |\dev(T)| \leq k \}\,,$$
with $k > 0$ a given material constant (the yield limit). The function $I_{K}$ is the indicator function of the set $K$ and the function $\partial  I_{K}$ denotes the subgradient of the convex, proper, lower semicontinuous function $I_K$ in the sense of  convex analysis (for more details we refer to \cite{AubFran}). As it know the flow rule \eqref{P-R} can be obtained as a limit of visco-elastic (Norton-Hoff) flow rule (see for example \cite{temam1})
\begin{equation}
\label{N-H}
\ve(u_t)-\D^{-1} T_t=\big\{|\dev(T)|-k)\big\}_{+}^{r}\,\frac{\dev(T)}{|\dev(T)|} 
\end{equation}
with $r\geq 1$. A natural extension of \eqref{P-R}, by including heat effects, is the following flow rule 
\begin{equation}
\label{P-R1}
\ve(u_t)-\D^{-1} T_t\in \partial  I_{K(\theta)}(T) \,,
\end{equation}
where the set $K(\theta)$ is in the form  $K(\theta)  = \{T \in \S : |\dev(T)| \leq k-\theta \}$. The natural area that is used in practice  is when  $0\leq\theta\leq k$ (more information can be found for example in \cite{ChelRacke}). Then the approximation of equation \eqref{P-R1} takes the form
\begin{equation}
\label{N-H1}
\ve(u_t)-\D^{-1} T_t=\big\{|\dev(T)|-(k-\theta))\big\}_{+}^{r}\,\frac{\dev(T)}{|\dev(T)|}\,.
\end{equation}
Assuming in the equation $\eqref{Main}_2$ that $\beta(\theta)=k-\theta$ and $0\leq\theta\leq k$ we obtain that the assumptions (C1), (C2) and (C3) are fulfils naturally. Therefore, the system considered in this article can be treated as an approximation of the Prandtl-Reuss model in which thermal effects were taken into account.

We will assume that the given data $F$, $g_D$, $g_{\theta}$, $u_0$, $T_0$, $\theta_0$ have the regularities  
\begin{equation}
\label{regularity}
    \begin{split}
        F\in L^{\frac{1}{1-\alpha}}(0,\T;L^\frac{1}{1-\alpha}(\Omega;\R^3))\,,&\quad g_D\in W^{1,\frac{1}{1-\alpha}}(0,\T;W^{\alpha,\frac{1}{1-\alpha}}(\partial\Omega;\R^3))\,, \\[1ex]
       u_0\in H^1(\Omega;\R^3)\,,&\quad (T_0,\dev(T_0))\in L^2(\Omega;\S)\times L^{2r}(\Omega;\SS)\,,\\[1ex]
        g_{\theta}\in L^2(0,\T;L^2(\partial\Omega;\R))\,,&\quad \theta_0\in L^1(\Omega;\R)\,,
    \end{split}
\end{equation}
where $\alpha$ is coming from assumption \eqref{warwzrostu} and we can observe that $\frac{1}{1-\alpha}\in (2,6)$. Such an atypical assumptions on the data $F$ and $g_D$ are related to the nonhomogeneous boundary condition for the Dirichlet boundary condition for the displacement vector. Systems with nonhomogeneous boundary condition for the displacement, where the temperature dependence occurs in the two constitutive equations, not been investigated.

Let us introduce a definition of a solution for the system \eqref{Main}. \begin{de}
\label{Maindef} Let $1<q<\frac{5}{4}$. We say that a vector $(u,T,\theta)$ is a solution of the system \eqref{Main} with the boundary and initial conditions \eqref{BC} and \eqref{IC} if:\\[1ex]
{\bf 1.}\hspace{2ex} it has the following regularities:
\begin{equation}
\label{regularity1}
    \begin{split}
u\in H^{1}(0,\T;H^1_{g_D}(\Omega;\R^3))\,,&\quad T\in L^{\infty}(0,\T;L^2(\Omega;\S))\,,\\[1ex]
\dev(T)\in L^{r+1}(0,\T;L^{r+1}(\Omega;\SS))\,,&\quad T_t\in L^{\frac{r+1}{r}}(0,\T;L^{\frac{r+1}{r}}(\Omega;\S))\,,\\[1ex]
\theta\in L^{q}(0,\T;W^{1,q}(\Omega))& \cap  C([0,\T];L^1(\Omega))\,,\\[1ex]
f(\theta)\in L^2(0,\T;L^2(\Omega))\,,&\quad \theta_t\in L^1\big(0,\T;\big(W^{1,q'}(\Omega)\big)^{\ast}\big)\,,
    \end{split}
\end{equation}
where $H^1_{g_D}(\Omega;\R^3):=\{u\in H^1(\Omega;\R^3):\, u=g_D\,\,\mathrm{on}\,\,\partial\Omega\}$ and the space $\big(W^{1,q'}(\Omega)\big)^{\ast}$ is the space of all linear bounded functionals on $W^{1,q'}(\Omega)$ $(\frac{1}{q}+\frac{1}{q'}=1)$.\\[1ex]
{\bf 2.}\hspace{2ex} the equations \eqref{Main} are satisfied in the following form
\begin{equation}
\int_0^\T\int_{\Omega}  \big(T - f(\theta)\id\big) \ve(\psi)\, \di x\,\di t + \int_0^\T\int_{\Omega}\D\varepsilon(u_{t}) \,\varepsilon(\psi)\,\di x\,\di t =\int_0^\T\int_{\Omega} F\,\psi\,\di x\,\di t
\label{balancede}
\end{equation}
for every function $\psi\in C^\infty([0,\T];H^1_0(\Omega;\R^3))$,
\begin{equation}
\begin{split}
\int_0^{\T}\int_{\Omega}\D^{-1} T_{t}\,\varphi\,\di x\,\di t+& \int_0^{\T}\int_{\Omega}\big\{|\dev(T)|-\beta(\theta)\big\}^{r}_{+}\frac{\dev(T)}{|\dev(T)|}\,\dev(\varphi)\,\di x\,\di t\\[1ex]
&=\int_0^{\T}\int_{\Omega}\ve(u_{t})\,\varphi\,\di x\,\di t
\end{split}
\label{421de}
\end{equation}
for all $\varphi \in L^{r+1}(0,\T;L^{r+1}(\Omega;\S))$ and 
\begin{equation}
\label{tempde}
\begin{split}
&\int_0^\T\int_{\Omega}\theta_{t}\, \phi\,\di x\,\di t  + \int_0^\T\int_{\Omega}\nabla\theta\, \nabla\phi\, \di x\,\di t +\int_0^\T\int_{\Omega} f\big(\theta )\mathrm{div} (u_{t})\, \phi \,\di x\,\di t\\[1ex]
&\hspace{2ex}= \int_0^\T\int_{\Omega} \big\{|\dev(T)|-\beta(\theta)\big\}^{r}_{+}|\dev(T)|\,\phi\, \di x\,\di t+\int_0^\T\int_{\partial\Omega}g_{\theta}\,\phi\,\di S(x)\,\di t
 \end{split}
\end{equation}
for all $\phi\in C^\infty([0,\T];C^\infty(\overline{\Omega}))$.\\[1ex]
{\bf 3.}\hspace{2ex} for almost all $x\in\Omega$ the initial conditions 
\begin{equation}
\label{ICde}
u(x,0)=u_0(x),\quad T(x,0)=T_0(x),\quad \theta(x,0)=\theta_0(x)\,.
\end{equation}
are met.
\end{de}
\begin{remark}
Taking in the equation \eqref{balancede} the test function from the space $C^\infty([0,\T];C^\infty_0(\Omega;\R^3))$, we deduce that the weak divergence  $\div (T-f(\theta)\id+\D\ve(u_t))$ fulfills
$$\div (T-f(\theta)\id+\D\ve(u_t))=-F \in L^{\frac{1}{1-\alpha}}(0,\T;L^\frac{1}{1-\alpha}(\Omega;\R^3))$$
and in that sense the equation $\eqref{Main}_1$ holds for almost all $(x,t)\in \Omega\times (0,\T)$. Additionally, the equation \eqref{421de} holds for all functions in the linear space $ L^{r+1}(0,\T;L^{r+1}(\Omega;\S))$, therefore 
$$\D^{-1} T_t(x,t)+\big\{|\dev(T(x,t))|-\beta(\theta(x,t))\big\}_{+}^{r}\,\frac{\dev(T(x,t))}{|\dev(T(x,t))|}=\ve(u_t(x,t))$$
 for almost all $(x,t)\in \Omega\times (0,\T)$. Which means that the inelastic constitutive relationship is fulfilled in a strong sense. Moreover, it is not difficult to conclude that equation \eqref{tempde} holds for almost everyone $t\in (0,\T)$ i.e.
 \begin{equation}
\label{tempde1}
\begin{split}
&\int_{\Omega}\theta_{t}\, \phi\,\di x\  +\int_{\Omega}\nabla\theta\, \nabla\phi\, \di x +\int_{\Omega} f\big(\theta )\mathrm{div} (u_{t})\, \phi \,\di x\\[1ex]
&\hspace{2ex}= \int_{\Omega} \big\{|\dev(T)|-\beta(\theta)\big\}^{r}_{+}|\dev(T)|\,\phi\, \di x+\int_{\Omega}g_{\theta}\,\phi\,\di S(x)\,.
 \end{split}
\end{equation}
\end{remark}
\begin{tw}$\mathrm{(Main\, result)}$\\[1ex]
\label{Mainresult}
Let assume that the growth assumptions \eqref{warwzrostu}, \eqref{warwzrostu1} and the assumptions (C1), (C2), (C3) are satisfied. Additionally assume that the given data $F$, $g_D$, $g_{\theta}$, $u_0$, $T_0$, $\theta_0$ have the regularities specified in \eqref{regularity}. Then there exists a solution of the system \eqref{Main} with the boundary and initial conditions \eqref{BC} and \eqref{IC} in the sense of Definition \ref{Maindef}.
\end{tw}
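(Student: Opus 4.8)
The plan is to construct the solution by a regularisation argument, extract weak limits by compactness, and identify the nonlinear limits by monotonicity and Young measures, following the strategy announced in the abstract.

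\medskip

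\emph{Step 1 (reduction and approximation).} First I would homogenise the boundary condition by writing $u=w+\mathcal U$, where $\mathcal U$ is an extension of $g_D$ of the regularity dictated by \eqref{regularity}; the new unknown $w$ solves a momentum equation with zero Dirichlet data and a modified right-hand side. Next, for $n\in\mathbb N$, I would replace the dissipative source $\{|\dev(T)|-\beta(\theta)\}_{+}^{r}|\dev(T)|$ on the right-hand side of $\eqref{Main}_3$ by its truncation at level $n$ (so that it lies in $L^\infty$) and the function $f$ by a bounded truncation $f_n$, \emph{the same} in $\eqref{Main}_1$ and $\eqref{Main}_3$ so that the energy cancellation between the thermal stress and the dissipative coupling is preserved. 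For this truncated system I would build an approximate solution $(u_n,T_n,\theta_n)$ by decoupling: for frozen $\theta$ the linear elasticity operator $v\mapsto-\dyw(\D\,\ve(v))$ determines $u_t$ from $T$ and $\theta$ through $\eqref{Main}_1$; substitution into $\eqref{Main}_2$ turns the stress equation into a well-posed evolution problem for $T$, since $T\mapsto\{|\dev(T)|-\beta(\theta)\}_{+}^{r}\dev(T)/|\dev(T)|$ is continuous, vanishes at the origin and is monotone, so that $T\mapsto T_t$ is a Lipschitz perturbation of a maximal monotone operator (global existence then follows from the energy estimate of Step 2); finally the truncated heat equation with the resulting bounded source is standard. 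A Schauder fixed point in $\theta$ closes the construction at level $n$.

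\medskip

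\emph{Step 2 (a priori estimates uniform in $n$).} Testing $\eqref{Main}_1$ with $u_{n,t}$, testing $\eqref{Main}_2$ with $T_n$ and combining yields the mechanical energy identity, which controls $\|u_{n,t}\|_{L^2(0,\T;H^1)}$, $\|\nabla u_n\|_{L^\infty(0,\T;L^2)}$, $\|T_n\|_{L^\infty(0,\T;L^2)}$ and $\|\dev(T_n)\|_{L^{r+1}(0,\T;L^{r+1})}$ \emph{provided} the coupling term $\int f_n(\theta_n)\,\dyw(u_{n,t})$ can be absorbed into the viscous term $\D\,\ve(u_{n,t})$, i.e.\ provided $\|f_n(\theta_n)\|_{L^2(0,\T;L^2)}$ is under control. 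The latter I would extract from the heat equation itself: its right-hand side is bounded in $L^1$ uniformly in $n$ (the truncated dissipation is dominated by $|\dev(T_n)|^{r+1}$, and $f_n(\theta_n)\,\dyw(u_{n,t})$ is a product of two $L^2$ functions), hence by the Boccardo--Gallou\"et parabolic estimates $\theta_n$ is bounded in $L^p(0,\T;W^{1,p})$ for every $p<\tfrac54$ and, by the parabolic embedding, in $L^{\sigma}(\Omega\times(0,\T))$ for every $\sigma<\tfrac53$; since $\alpha<\tfrac56$ one may choose $\sigma$ with $\sigma/\alpha>2$, and using \eqref{warwzrostu1} on the negative part this gives the required uniform bound on $\|f_n(\theta_n)\|_{L^2(L^2)}$. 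Carrying the two estimates simultaneously (a bootstrap, or a Gronwall argument in the combined quantity) closes the loop. From the equations one then reads off the bounds on $\|T_{n,t}\|_{L^{(r+1)/r}(L^{(r+1)/r})}$ and on $\|\theta_{n,t}\|_{L^1(0,\T;(W^{1,q'})^{\ast})}$.

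\medskip

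\emph{Step 3 (passage to the limit and identification).} Along a subsequence one gets $u_n\rightharpoonup u$ in $H^1(0,\T;H^1)$, $T_n\rightharpoonup T$ weakly-$\ast$ in $L^\infty(L^2)$, $\dev(T_n)\rightharpoonup\dev(T)$ in $L^{r+1}$, $T_{n,t}\rightharpoonup T_t$, and $\theta_n\rightharpoonup\theta$ in $L^q(W^{1,q})$; since $\theta_{n,t}$ is bounded in $L^1((W^{1,q'})^{\ast})$, the Aubin--Lions lemma gives $\theta_n\to\theta$ strongly in $L^1$ and a.e., whence $\beta(\theta_n)\to\beta(\theta)$ and, by continuity of $f$ together with the super-$L^2$ bound of Step 2, $f_n(\theta_n)\to f(\theta)$ strongly in $L^2(L^2)$; combined with $\dyw(u_{n,t})\rightharpoonup\dyw(u_t)$ this identifies the limit of $f_n(\theta_n)\,\dyw(u_{n,t})$ and yields $f(\theta)\in L^2(L^2)$. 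The Norton--Hoff term in $\eqref{Main}_2$ is then identified by the Minty--Browder trick, using monotonicity of $T\mapsto\{|\dev(T)|-\beta(\theta)\}_{+}^{r}\dev(T)/|\dev(T)|$ and the strong convergence of $\theta_n$; Young measures are used to characterise the weak limit of the dissipation $\{|\dev(T_n)|-\beta(\theta_n)\}_{+}^{r}|\dev(T_n)|$ and to relate it to the weak limit of $\dev(T_n)$, which together with the monotonicity upgrades $\dev(T_n)\to\dev(T)$ to strong convergence in $L^{r+1}$ and fixes the correct limit for the source of the heat equation; the truncation is removed using the $L^1$-bound by $|\dev(T_n)|^{r+1}$ and the equi-integrability it provides. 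The $L^1$-data truncation technique of Blanchard then gives the $C([0,\T];L^1)$ regularity of $\theta$, the recovery of the weak formulation \eqref{tempde} and the initial datum $\theta_0$; the initial conditions for $u$ and $T$ follow from the weak formulations with time-dependent test functions and the time continuity of $u$ and $T$.

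\medskip

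\emph{Main obstacle.} The hard part is closing the a priori estimates: because $f(\theta)\,\dyw(u_t)$ is only integrable, $\theta$ has merely the weak $L^q(W^{1,q})$ regularity typical of $L^1$-data parabolic problems, yet one needs $f(\theta)\in L^2(L^2)$ to absorb the thermo-mechanical coupling into the viscous term $\D\,\ve(u_t)$; this forces the bootstrap through the Boccardo--Gallou\"et estimates, and it is precisely here that the restriction $\alpha\in(\tfrac12,\tfrac56)$ and the sharper bound \eqref{warwzrostu1} for negative arguments are used. A second difficulty is the complete absence of classical compactness for $T_n$, which makes the Minty--Browder identification of the Norton--Hoff nonlinearity and the Young-measure treatment of the heat source unavoidable; and everything must be done consistently with the nonhomogeneous Dirichlet condition on $u$, which is the reason for the unusual $L^{1/(1-\alpha)}$-integrability imposed on $F$ and $g_D$ in \eqref{regularity}.
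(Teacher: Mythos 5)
Your overall architecture (homogenisation of the boundary data, truncation, compactness, Minty--Browder plus Young measures, Blanchard's truncation for $C([0,\T];L^1)$) matches the paper's strategy, but Step 2 contains a genuine gap: the way you propose to close the a priori estimates uniformly in the truncation parameter is circular and the circle does not break for $\alpha>\tfrac12$. You want to absorb $\int f_n(\theta_n)\,\dyw(u_{n,t})$ into the viscous term, which requires a uniform bound on $\|f_n(\theta_n)\|_{L^2(L^2)}$, and you propose to get that bound from Boccardo--Gallou\"et applied to the heat equation; but the $L^1$-norm of the heat source itself contains $\|f_n(\theta_n)\|_{L^2}\|\dyw(u_{n,t})\|_{L^2}$, and the mechanical energy bound gives $\|\dyw(u_{n,t})\|_{L^2}\lesssim 1+\|f_n(\theta_n)\|_{L^2}$. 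Tracking the exponents (Boccardo--Gallou\"et is homogeneous of degree one in the $L^1$ datum, interpolation with $L^\infty(L^1)$ gives $\theta_n\in L^{2\alpha}$, and \eqref{warwzrostu} gives $\|f_n(\theta_n)\|^2_{L^2}\lesssim 1+\|\theta_n\|^{2\alpha}_{L^{2\alpha}}$), your bootstrap produces an inequality of the form $X\lesssim 1+X^{2\alpha}$ with $X=\|f_n(\theta_n)\|^2_{L^2}$ and $2\alpha>1$, which yields no bound. The paper closes the energy estimate without any $L^2$ control of $f$: it multiplies the mechanical identity by a large constant $M$ and tests the heat equation with the bounded function $\TC_M(\theta^{k})$, so that after the cancellation of the dissipative source the coupling appears as $\int f\,\dyw(u^k_t)\,(M-\TC_M(\theta^{k}))$; the factor $M-\TC_M(\theta^{k})$ vanishes precisely on the set $\{\theta^{k}\ge M\}$ where the growth $|\theta|^{\alpha}$, $\alpha>\tfrac12$, would be harmful, and on the remaining set the square-root growth \eqref{warwzrostu1} for negative arguments gives $f^2\lesssim 1+|\theta^{k}|$, i.e.\ a term linear in $\varphi_M(\theta^{k})$, so Gronwall closes (Theorem \ref{tw:4.1}). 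The bound $f(\TC_k(\theta^{k}+\tilde\theta))\in L^2$ is then a \emph{consequence} (Lemma \ref{lem:4.2} and Remark \ref{col:4.5}) used only in the limit passage, not an ingredient of the energy estimate. Your proposal never describes this test-function device, and without it the uniform estimates it rests on are not established.

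A secondary point: your truncated system omits the extra coercive term $\tfrac1k|\dev(T^k)|^{2r-1}\dev(T^k)/|\dev(T^k)|$ that the paper inserts into the flow rule. The paper needs it both to obtain $L^2$ bounds on $T^k_{m,t}$ at the Galerkin level and, more importantly, to make the vector field strictly monotone, which is what forces the Young measure generated by $\dev(T^k_m)$ to be a Dirac and yields a.e.\ convergence. The bare Norton--Hoff map $S\mapsto\{|\dev S|-\beta(\theta)\}^{r}_{+}\dev S/|\dev S|$ is \emph{not} strictly monotone (it vanishes on the whole region $|\dev S|\le\beta(\theta)$), so your claim that monotonicity plus Young measures upgrades $\dev(T_n)\rightharpoonup\dev(T)$ to strong $L^{r+1}$ convergence is unjustified in your single-level scheme; you would have to either add such a strictly monotone regularisation, or imitate the paper's outer limit, which avoids strong convergence of $\dev(T^k)$ and instead identifies the heat source through the energy identity of Corollary \ref{col:4.8} and Lemmas \ref{lem:4.4}--\ref{lem:4.5} (this in turn requires the careful trace/deviator splitting to justify $\int\D^{-1}T_tT=\tfrac12\tfrac{\di}{\di t}\int\D^{-1}T\,T$ when $T_t$ is only in $L^{\frac{r+1}{r}}$, a point your sketch also passes over). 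Your construction of the truncated solutions by maximal monotone operator theory plus a Schauder fixed point, instead of the paper's Galerkin scheme, is a legitimate alternative in principle, but it does not by itself supply these missing identification arguments.
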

The main idea of the proof of Theorem \ref{Mainresult} is our proposed approximation of inelastic constitutive relationship $\eqref{Main}_3$. Simultaneously we make a truncation in the thermal part of the stress tensor and in nonlinear terms occurring in the heat equation $\eqref{Main}_2$. The applied approximation allows for obtaining the $L^2$-strong solutions (Definition \ref{defk}) in any step of approximation. Comparing our results with other from the literature, it can be concluded that in many cases the approximations used did not allow to consider the models in the presented generality. For example, in articles \cite{ChelminskiOwczarekthermoI,ChelminskiOwczarekthermoII} and \cite{barowcz2} the authors use the Yosida approximation to show the existence in the used approximation. The Yosida approximation did not allow to consider the model in which the flow rule depends on the temperature. Additionally, solutions were obtained only in the renormalised sense. In works \cite{GKS15,GwiazdaKlaweSwierczewska014,tve-Orlicz} and \cite{GKO}, Gwiazda et al. proposed a very complicated two-level Galerkin approximation for the visco-elastic strain tensor. With the help of this approximation, they were able to consider a flow rules which depends on the temperature. However, the dependence on temperature in an elastic constitutive relationship occurs only in elementary cases such as homogeneous thermal expansion \cite{GKO}. In these articles, for the models in question, very weak solutions were obtained, where the visco-elastic strain tensor can be recovered from the equation on its evolution, only. We also want to mention two articles \cite{HERZOGtermoplast} and \cite{KlaweOwczar20} in which the dependence on temperature occurs in both: the elastic and inelastic constitutive equations. However, ignoring the differences in the constitutive relations under consideration, it is assumed that the nonlinear function $f$ is continuous and bounded (in the article \cite{HERZOGtermoplast} $f$ is even a Lipschitz function). Which makes the model under consideration easier to analyze. Summarizing, Theorem \ref{Mainresult} provides us with the existence of weak solution for model \eqref{Main} in which only the heat equation is satisfied in a weak sense. To pass to the limit in the approximation used, we apply Young measures, Boccardo's and Gallou{\"e}t's approach, truncation methods for heat equation used by D. Blanchard and Minty-Browder trick.
\section{Truncation of the problem}
\renewcommand{\theequation}{\thesection.\arabic{equation}}
\setcounter{equation}{0}%
Before we propose an approximation for system \eqref{Main}, let us consider two auxiliary initial-value problems:
\begin{equation}
\left\{
\begin{aligned}
-{\div}\D\varepsilon(\tilde{u}_t)&= F  & \qquad\mbox{in } \Omega\times (0,\T), \\
\tilde{u}_{t} &= g_{D_{,t}} & \mbox{on } \partial\Omega\times (0,\T), 
\\
\tilde{u}(x,0) &= 0 & \mbox{in } \Omega
\end{aligned}
\right.
\label{war_brz_u}
\end{equation}
and
\begin{equation}
\left\{
\begin{aligned}
\tilde{\theta}_t -\Delta \tilde{\theta} &= 0 & \qquad\mbox{in } \Omega\times (0,\T), \\
\frac{\partial\tilde{\theta}}{\partial n} &= g_{\theta} & \mbox{on } \partial\Omega\times (0,\T), \\
\tilde{\theta}(x,0) &= 0 & \mbox{in } \Omega,
\end{aligned}
\right.
\label{war_brz_t}
\end{equation}
where $F$ is a given volume force, $g_D$ and $g_\theta$ are given boundary values for displacement and thermal flux, respectively. $g_{D_{,t}}$ denotes the time derivative of the function $g_D$. Existence and regularity of solutions to those systems are standard results. Indeed, the regularity assumptions \eqref{regularity} allow us to use the Corollary $4.4$ of \cite{Valent} and obtain one and only one solution $\tilde{u}_t$ of \eqref{war_brz_u} belonging to $L^{\frac{1}{1-\alpha}}(0,\T;W^{1,\frac{1}{1-\alpha}}(\Omega;\R^3))$. Initial condition in \eqref{war_brz_u} gives $\tilde{u}\in W^{1,\frac{1}{1-\alpha}}(0,\T;W^{1,\frac{1}{1-\alpha}}(\Omega;\R^3))$. Additionally, using the Corollary 5.7 of \cite{Valent} we get the solution $\tilde{\theta}$ of \eqref{war_brz_t} such that $\tilde{\theta}\in L^2(0,\T;H^1(\Omega;\R))$ and  $\tilde{\theta}_t\in L^2(0,\T;L^2(\Omega;\R))$.\\[1ex]
If we denote by $(\hat{u},T,\hat{\theta})$ the solution of the problem \eqref{Main} - \eqref{IC} and define $u=\hat{u}-\tilde{u}$ and $\theta=\hat{\theta}-\tilde{\theta}$, we observe that we can write the investigated problem equivalently i.e. for $x\in \Omega$ and $t\in [0,\T]$
\begin{equation}
\label{Main1}
\begin{split}
\mathrm{div}(T-f(\theta+\tilde{\theta})\id )&=-\mathrm{div}\,\D (\ve(u_t))\,,\\[1ex]
\D^{-1} T_t+\big\{|\dev(T)|-\beta(\theta+\tilde{\theta})\big\}^{r}_{+}\,\frac{\dev(T)}{|\dev(T)|}&=\ve(u_t)+\ve(\tilde{u}_t)\,,\\[1ex]
\theta_t-\Delta\theta+f(\theta+\tilde{\theta})\mathrm{div}(u_t+\tilde{u}_t)&=\big\{|\dev(T)|-\beta(\theta+\tilde{\theta})\big\}^{r}_{+}|\dev(T)|\,,
\end{split}
\end{equation}
with the initial-boundary conditions in the following form
\begin{equation}
\label{eq:Ini-Bond}
\begin{array}{rclrcl}
u_{|_{\partial\Omega}}&=&0,\quad&
\frac{\partial\,\theta}{\partial\,n}_{|_{\partial\Omega}}&=&0, \\[1ex]
\theta(0)=\hat{\theta}_0&=&\theta_{0},&
u(0)&=&\hat{u}_0=u_0,\quad
T(0)=T_0.
\end{array}
\end{equation}
It follows from the above considerations that in order to prove Theorem \ref{Mainresult} it is enough to find a solution of initial-boundary value problem \eqref{Main1} and \eqref{eq:Ini-Bond} i.e. which satisfies the conditions of Definition \ref{Maindef}. Therefore, it is sufficient to propose approximations for system \eqref{Main1}. This approximation will use the truncation function. 

For any positive real number $k$, let us define the truncation function $\TC_k$ at height $k>0$ i.e. $\TC_{k}(r)=\min\{k,\max(r,-k)\}$. Notice that $\TC_k(\cdot)$ is a real-valued Lipschitz function. Moreover, let us define $\varphi_k(r)=\int_0^r \TC_k(s)\,\di s$, hence 
\begin{equation}
\label{pierwotna}
\varphi_k(r) = \left\{ \begin{array}{ll}
\frac{1}{2}r^2 & \textrm{if}\quad |r|\leq k\,,\\[1ex]
\frac{1}{2}k^2+k(|r|-k) & \textrm{if}\quad |r|>k\\
\end{array} \right.
\end{equation}
and $\varphi_k$ is a $W^{2,\infty}(\R;\R)$-function with linear growth at infinity.\\
We propose the following approximation of the system \eqref{Main1}: for $k>0$ we will consider the following system
\begin{equation}
\label{AMain1}
\begin{split}
\mathrm{div}\big(T^k-f\big(\TC_k(\theta^k+\tilde{\theta})\big)\id \big)=-\mathrm{div}\,\D (\ve(u^k_t))&\,,\\[1ex]
\D^{-1} T^k_t+\big\{|\dev(T^k)|-\beta(\theta^k+\tilde{\theta})\big\}^{r}_{+}\,\frac{\dev(T^k)}{|\dev(T^k)|}
+\frac{1}{k}|\dev(T^k)|^{2r-1}\,\frac{\dev(T^k)}{|\dev(T^k)|} &=\ve(u^k_t)+\ve(\tilde{u}_t)\,,\\[1ex]
\theta^k_t-\Delta\theta^k+f\big(\TC_k(\theta^k+\tilde{\theta})\big)\mathrm{div}(u^k_t+\tilde{u}_t)= \TC_k\big(\big\{|\dev(T^k)|-\beta(\theta^k+\tilde{\theta})\big\}^{r}_{+}&|\dev(T^k)|\big)\,.
\end{split}
\end{equation}
The system \eqref{AMain1} is considered with the same boundary conditions as the system \eqref{Main1} and with the following initial conditions 
\begin{equation}
\label{eq:Ini-Bondk}
\theta^k(0) = \TC_k(\theta_{0}),\quad
u^k(0)=\hat{u}_0=u_0,\quad
T^k(0)=T_0.
\end{equation}
In order to state the existence result for the system \eqref{AMain1} we start with the following notion of solution for \eqref{AMain1}.
\begin{de}
\label{defk}
Suppose that the given data satisfy \eqref{regularity}. We say that for each positive number $k>0$, the vector $(u^k, T^k, \theta^k)$ is a solution of the truncated system \eqref{AMain1} with boundary conditions \eqref{eq:Ini-Bond} and initial conditions \eqref{eq:Ini-Bondk}  if
\begin{equation*}
    \begin{split}
& u^k \in H^1(0,\T;H^1(\Omega;\R^3))\,,\quad T^k\in H^1(0,\T;L^2(\Omega;\S))\,,\\[1ex]
       &\dev(T^k)\in L^{2r}(0,\T; L^{2r}(\Omega;\SS))\,\quad\theta^k\in L^{\infty}(0,\T;H^1(\Omega))\cap H^1(0,\T;L^2(\Omega))
    \end{split}
\end{equation*}
and the first equation in \eqref{AMain1} is satisfied in the following sense
\begin{equation*}
\int_{\Omega}  \big(T^k - f\big(\TC_k(\tilde{\theta}+\theta^k )\big)\id\big) \varepsilon(w)\, \di x + \int_{\Omega}\D(\varepsilon(u^k_{t})) \,\varepsilon(w)\,\di x = 0
\end{equation*}
for all $w\in H^1_0(\Omega;\R^3)$ and almost all $t\in (0,\T)$. The equation $\eqref{AMain1}_2$ is fulfill in the following sense
\begin{equation*}
\begin{aligned}
\int_{\Omega}\D^{-1} T^k_{t}\,\tau\,\di x&+ \int_{\Omega}\big\{|\dev(T^k)|-\beta(\theta^k+\tilde{\theta})\big\}^{r}_{+}\,\frac{\dev(T^k)}{|\dev(T^k)|}\,\tau\,\di x
\\[1ex]
&+\frac{1}{k}\int_{\Omega}|\dev(T^k)|^{2r-1}\,\frac{\dev(T^k)}{|\dev(T^k)|}\,\tau\,\di x
\\[1ex]
&=\int_{\Omega}\big(\ve(u^k_{t})+\ve(\tilde{u}_t)\big)\tau\,\di x
\end{aligned}
\end{equation*}
for all $(\tau,\dev(\tau))\in L^2(\Omega;\S)\times L^{2r}(\Omega;\SS)$ and almost all $t\in (0,\T)$ and the  equation $\eqref{AMain1}_3$
\begin{equation*}
\begin{split}
\int_{\Omega}&\theta^k_{t}\, v\,\di x  + \int_{\Omega}\nabla\theta^k\nabla v\, \di x    
+\int_{\Omega} f\big(\TC_k(\tilde{\theta}+ \theta^k )\big)\mathrm{div} (\tilde{u}_t + u^k_{t})\, v \,\di x\\[1ex]
=& \int_{\Omega} \TC_k \big(\big\{|\dev(T^k)|-\beta(\theta^k+\tilde{\theta})\big\}^{r}_{+}|\dev(T^k)|  \big)\,v\, \di x\,.
 \end{split}
\end{equation*}
for all $v\in H^1(\Omega)$ and almost all $t\in (0,\T)$.
\end{de}
 \begin{remark}
 It is worth noting that if functions $u^k$,  $T^k$ and $\theta^k$ are solutions of a truncated system \eqref{AMain1} in the sense of Definition \ref{defk}, the equations appearing in this system are satisfied almost everywhere for $(x,t)\in \Omega\times (0,\T)$. Then the first equation in \eqref{AMain1} is understood in the sense that the weak divergence of the function $$T^k-f\big(\TC_k(\theta^k+\tilde{\theta})\big)\id \big)+\D (\ve(u^k_t))$$
 is equal to zero. This type of definition of solution for systems occurring in the mechanics of continuum can be found in the literature under the name $L^2-$strong solution (compare with \cite{ChelRacke} and \cite{ChelNeffOwczarek15}).
 \end{remark}
 \begin{tw}$\mathrm{(Existence\, for\, each\, approximation\, step)}$\hspace{1ex} Let us assume that the given data have the regularities specified in \eqref{regularity}. Then for all $\T>0$ and $k>0$ the system \eqref{AMain1} with boundary conditions \eqref{eq:Ini-Bond} and initial conditions \eqref{eq:Ini-Bondk} possesses a solution  $(u^k, T^k, \theta^k)$ in the sense of Definition \ref{defk}.
 \label{istnieniek}
 \end{tw}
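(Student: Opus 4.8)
The plan is to construct solutions by a Faedo--Galerkin scheme in all three unknowns, exploiting the truncations to keep every term bounded, and then to pass to the limit using compactness for $u^k,\theta^k$ and monotonicity for $T^k$. I would fix a basis $\{w_i\}$ of $H^1_0(\Omega;\R^3)$ (say the eigenfunctions of $v\mapsto-\mathrm{div}\,\D\ve(v)$), a basis $\{v_j\}$ of $H^1(\Omega)$ (the Neumann Laplace eigenfunctions, smooth up to $\partial\Omega$), and, since $\eqref{AMain1}_2$ contains no spatial derivatives of $T^k$ and hence needs no elliptic structure, a basis $\{\tau_\ell\}$ of $L^2(\Omega;\S)$ made of smooth matrix fields adapted to the splitting $\S=\SS\oplus\R\id$. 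Writing $u^k_m,\theta^k_m,T^k_m$ for the truncated expansions and projecting \eqref{AMain1} onto these bases, one gets an ODE system for the coefficients: by (C1)--(C3) the coefficient $\beta(\theta^k_m+\tilde{\theta})$ is bounded and depends continuously on the unknowns, and by the truncations $f(\TC_k(\theta^k_m+\tilde{\theta}))$ and $\TC_k(\{|\dev(T^k_m)|-\beta(\theta^k_m+\tilde{\theta})\}^r_+|\dev(T^k_m)|)$ are bounded; the right-hand side is therefore continuous in the unknowns, measurable in $t$ and locally bounded, so Carath\'eodory's theorem gives a local solution.

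Next I would derive $m$-uniform a priori bounds. Testing the first Galerkin equation with $u^k_{m,t}$ and invoking Korn's inequality controls $u^k_m$ in $H^1(0,\T;H^1(\Omega;\R^3))$, since the $\D\ve(\cdot)$-term is coercive and the $f(\TC_k(\cdot))$-term is bounded. Writing $N_k(T;\theta)=\{|\dev(T)|-\beta(\theta+\tilde{\theta})\}^r_+\tfrac{\dev(T)}{|\dev(T)|}+\tfrac1k|\dev(T)|^{2r-1}\tfrac{\dev(T)}{|\dev(T)|}$, one notes that $N_k(\cdot;\theta)$ is the deviatoric subgradient of the convex potential $\Psi_k(\xi;\theta)=\tfrac1{r+1}\{|\xi|-\beta(\theta+\tilde{\theta})\}_+^{r+1}+\tfrac1{2rk}|\xi|^{2r}$; testing $\eqref{AMain1}_2$-Galerkin with $T^k_m$ gives bounds on $T^k_m$ in $L^\infty(0,\T;L^2)$ and on $\dev(T^k_m)$ in $L^{2r}(0,\T;L^{2r})$ (the $\tfrac1k$-term provides the coercivity), while testing with $T^k_{m,t}$ and using $\int N_k(T^k_m;\theta^k_m):T^k_{m,t}=\tfrac{\di}{\di t}\!\int\Psi_k(\dev(T^k_m);\theta^k_m)+\int\{|\dev(T^k_m)|-\beta\}^r_+\,\beta'(\cdot)(\theta^k_{m,t}+\tilde{\theta}_t)$ bounds $T^k_{m,t}$ in $L^2(0,\T;L^2)$; here the $\beta'$-term is absorbed via (C3), the $\dev(T^k_m)$-bound and the heat-equation estimate, and $\int\Psi_k(\dev(T_0);\theta_0)<\infty$ because $\dev(T_0)\in L^{2r}$ by \eqref{regularity} and $r+1\le 2r$. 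Finally, since $\TC_k$ makes the heat source bounded and $f(\TC_k(\cdot))$ bounded, testing $\eqref{AMain1}_3$-Galerkin with $\theta^k_m$ and $\theta^k_{m,t}$ yields $\theta^k_m$ bounded in $L^\infty(0,\T;H^1)\cap H^1(0,\T;L^2)$ (this step uses $\TC_k(\theta_0)\in H^1$, arranged by a preliminary regularization of the initial temperature). These bounds extend the solution to $[0,\T]$.

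Then I would let $m\to\infty$. From the bounds plus the Aubin--Lions lemma one extracts $u^k_m\rightharpoonup u^k$ in $H^1(0,\T;H^1)$ with $u^k_m\to u^k$ in $L^2(0,\T;L^2)$, $T^k_m\rightharpoonup T^k$ weakly-$*$ in $L^\infty(0,\T;L^2)$ with $\dev(T^k_m)\rightharpoonup\dev(T^k)$ in $L^{2r}(0,\T;L^{2r})$ and $T^k_{m,t}\rightharpoonup T^k_t$ in $L^2(0,\T;L^2)$, and $\theta^k_m\to\theta^k$ in $L^2(0,\T;L^2)$ and a.e. The a.e.\ convergence of $\theta^k_m$ and the boundedness of $f(\TC_k(\cdot))$ and Lipschitz continuity of $\beta$ make $f(\TC_k(\theta^k_m+\tilde{\theta}))$ and $\beta(\theta^k_m+\tilde{\theta})$ converge strongly in $L^2(0,\T;L^2)$; then all linear terms and all weak$\,\cdot\,$strong products (in particular $f(\TC_k(\theta^k_m+\tilde{\theta}))\,\mathrm{div}(u^k_{m,t}+\tilde{u}_t)$) pass to the limit. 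The one genuinely nonlinear term is the flow-rule operator: $N_k(\cdot;\theta^k)$ is bounded, hemicontinuous and monotone from $\{T\in L^2(\Omega;\S):\dev(T)\in L^{2r}(\Omega;\SS)\}$ into its dual, and strictly monotone in $\dev(T)$ thanks to the $\tfrac1k$-term; combining monotonicity with the energy identity obtained by testing $\eqref{AMain1}_2$-Galerkin with $T^k_m$ (using $T^k_m(\T)\rightharpoonup T^k(\T)$ in $L^2$ and weak lower semicontinuity of $\eta\mapsto\int\D^{-1}\eta:\eta$) yields $\limsup_m\int_0^\T\!\int_\Omega N_k(T^k_m;\theta^k_m):T^k_m\le\int_0^\T\!\int_\Omega\chi:T^k$ for the weak limit $\chi$ of $N_k(T^k_m;\theta^k_m)$; the Minty--Browder argument then gives $\chi=N_k(T^k;\theta^k)$ and, by strict monotonicity, $\dev(T^k_m)\to\dev(T^k)$ a.e. This last a.e.\ convergence (equivalently, that the Young measure generated by $\dev(T^k_m)$ is Dirac) lets one pass to the limit in the truncated heat source $\TC_k(\{|\dev(T^k_m)|-\beta(\theta^k_m+\tilde{\theta})\}^r_+|\dev(T^k_m)|)$. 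Since all these convergences live in spaces embedded in $C([0,\T];L^2)$, the initial conditions \eqref{eq:Ini-Bondk} are preserved, and $(u^k,T^k,\theta^k)$ solves \eqref{AMain1} in the sense of Definition \ref{defk}.

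The hard part, I expect, is precisely the identification of the weak limits of the nonlinearities built from $T^k$: because $\eqref{AMain1}_2$ has no smoothing effect in space, there is no Aubin--Lions compactness for $T^k_m$, so strong convergence of $\dev(T^k_m)$ must come entirely from the monotone structure together with the energy equality --- which is exactly the role of the artificial term $\tfrac1k|\dev(T^k)|^{2r-1}\tfrac{\dev(T^k)}{|\dev(T^k)|}$: it renders the flow-rule operator strictly monotone in $\dev(T)$ and simultaneously supplies the $L^{2r}$- and, through its subgradient structure, $L^2$-control of $T^k$ and $T^k_t$. Once the truncations $\TC_k$ have made the heat-equation nonlinearities bounded, the remaining estimates and limit passages are routine.
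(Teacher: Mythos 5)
Your proposal is correct and follows the same skeleton as the paper: a three-field Faedo--Galerkin scheme with the Temam-type stress basis (a basis of $L^2(\Omega;\S)$ whose deviators lie in $L^{2r}$), Carath\'eodory's theorem for the resulting ODE system, the same $m$-uniform estimates obtained by testing with $u^k_{m,t}$, $T^k_m$, $\theta^k_m$, $\theta^k_{m,t}$ and $T^k_{m,t}$ (including the treatment of the $\beta'$-term via (C3) and the $L^2$-bound on $\theta^k_{m,t}$), and the same limsup energy inequality for the flow-rule nonlinearity. The one genuine difference is the identification step: you run Minty--Browder directly and use the strict monotonicity supplied by the artificial term $\frac{1}{k}|\dev(T^k)|^{2r-1}\frac{\dev(T^k)}{|\dev(T^k)|}$ to upgrade $\int\big(N_k(T^k_m;\theta^k_m)-N_k(T^k;\theta^k_m)\big)\!:\!\big(\dev(T^k_m)-\dev(T^k)\big)\to 0$ into a.e.\ convergence of $\dev(T^k_m)$, whereas the paper phrases exactly the same mechanism in the language of Young measures (the pair $(\theta^k_m,\dev T^k_m)$ generates $\delta_{\theta^k}\otimes\nu_{(x,t)}$, and strict monotonicity forces $\nu_{(x,t)}$ to be a Dirac mass). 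Both routes deliver the same key output --- pointwise convergence of $\dev(T^k_m)$, which is what permits passing to the limit in the truncated heat source by dominated convergence --- and yours is arguably the more elementary; the paper's choice is motivated by the fact that a ready-made Young-measure theorem from the literature almost, but not quite, applies. Two small points you share with the paper and should state explicitly: the Galerkin initial data must be controlled uniformly in $m$ in $L^{2r}$ for $\dev(T^k_m(0))$ and in $H^1$ for $\theta^k_m(0)$ (your preliminary regularization of $\TC_k(\theta_0)$ takes care of the latter), since the bare $L^2$-projections do not automatically provide these bounds.
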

 Theorem \ref{istnieniek} is crucial in the proof of Theorem \ref{Mainresult}. Thanks to this theorem we can pass to the limit in the system \eqref{AMain1}  and then obtain a solution of \eqref{Main} in the sense of Definition \ref{Maindef}. Writing the system \eqref{AMain1} as used by R. Temam allows to use a single - level Galerkin approximation. With the additional term in the equation $\eqref{AMain1}_2$ we show an $L^2(L^2)$- estimates for time derivatives, which are independent of the Galerkin approximation step. Then, using the Young's measure approach, we pass to the limit in nonlinearities occurring in the system \ref{AMain1}.
 \subsection{Existence for the truncation system}
We are going to use the Galerkin approximation. First, we focus on basis for displacement.  Let us consider the space $L^2(\Omega;\mathcal{S}^3)$ with a scalar product defined
\begin{equation}
(\xi,\eta)_{\D}:=  \int_\Omega {\D}^\frac{1}{2}\xi\,{\D}^\frac{1}{2}\eta\, \di x 
\quad\mbox{for }\xi,\,\eta\in L^2(\Omega,\mathcal{S}^3),
\label{eq:defD}
\end{equation}
where ${\D}^\frac{1}{2}$ is the square root of  matrix $\D$. We define by $\{w_i\}_{i=1}^{\infty}$ a set of eigenfunctions of the operator $-\mathrm{div}\, \D\,\varepsilon(\cdot)$ with the domain $H_0^{1}(\Omega;\mathbb{R}^3)$ and set $\{ \lambda_i \}_{i=1}^{\infty}$ which contains a corresponding eigenvalues,
\begin{equation}
\int\limits_{\Omega}\D\,\varepsilon(w_i)\,\varepsilon(w_j)\, \di x = \lambda_i \int\limits_{\Omega}w_i\cdot w_j\, \di x.
\end{equation} 
We assume that $\{w_i\}_{i=1}^{\infty}$ is orthonormal in $H^{1}_0(\Omega;\mathbb{R}^3)$ with the inner product
\begin{equation}
( w, v)_{H^{1}_0(\Omega)}=( \varepsilon(w), \varepsilon(v))_{\D}
\end{equation}
and orthogonal in $L^2(\Omega;\R^3)$. $\D_{ijkl}$ are constant and boundary of $\Omega$ is $C^2$, thus each function from basis $\{w_i\}_{i=1}^{\infty}$ belongs to $H^{3}(\Omega;\mathbb{R}^3)$, see \cite{Brezis1}.\\
Bases for temperature and stress are constructed in a standard way. Let $\{v_i\}_{i=1}^\infty$ be the subset of $H^{1}(\Omega)$ such that
\begin{equation}
\label{wektoryw}
\int_{\Omega} \nabla v_i\,\nabla\phi\,\di x = \mu_i\int_{\Omega} v_i\,\phi\di x,
\end{equation}
holds for every function $\phi\in C^{\infty}(\overline{\Omega})$, see \cite{Alt,strauss}. Moreover, $\{v_i\}_{i=1}^{\infty}$ is orthonormal in $H^{1}(\Omega)$ and orthogonal in $L^2(\Omega)$. By $\{\mu _i\}_{i=1}^{\infty}$ we denote a set of corresponding eigenvalues.\\
The idea of Galerkin approximation for the stress tensor $T$ is taken from \cite{temam1}. Let $\{\tau_i\}_{i=1}^\infty$ be a basis of $L^2(\Omega;\S)$ such that $\dev(\tau_i)\in L^{2r}(\Omega;\SS)$.\\[1ex]
For every $m\in\mathbb{N}$, we consider approximate solutions in the following form 
\begin{equation}
\begin{aligned}
u_{m}^k & = \sum_{n=1}^m\alpha_{k,m}^n(t) w_n\,,
 \\
\theta_{m}^k & = \sum_{n=1}^m\beta_{k,m}^n(t) v_n\,,
 \\
T_{m}^k & = \sum_{n=1}^m\gamma_{k,m}^n(t) \tau_n\,\,,
\end{aligned}
\label{eq:postac}
\end{equation}
where $k>0$ is the fixed truncation level. The triple $(u_{m}^k, \theta_{m}^k, T_m^k)$ defined in \eqref{eq:postac} is a solution of the approximate system of equations
\begin{equation}
\begin{aligned}[b]
\int_{\Omega}  \big(T_{m}^k - f\big(\TC_k(\tilde{\theta}+\theta_{m}^k )\big)\id\big) \varepsilon(w_n)\, \di x &+ \int_{\Omega}\D(\varepsilon(u^k _{m,_{t}})) \,\varepsilon(w_n)\,\di x = 0\,,
\\[1ex]
\int_{\Omega}\D^{-1} T^k_{m,_{t}}\tau_n\,\di x&+ \int_{\Omega}\big\{|\dev(T^k_m)|-\beta(\theta^k_m+\tilde{\theta})\big\}^{r}_{+}\,\frac{\dev(T^k_m)}{|\dev(T^k_m)|}\,\tau_n\,\di x
\\[1ex]
&+\frac{1}{k}\int_{\Omega}|\dev(T^k_m)|^{2r-1}\,\frac{\dev(T^k_m)}{|\dev(T^k_m)|}\,\tau_n\,\di x
\\[1ex]
&=\int_{\Omega}\big(\ve(u^k_{m,_{t}})+\ve(\tilde{u}_t)\big)\,\tau_n\,\di x\,,
\\[1ex] \label{app_system}
\int_{\Omega}(\theta^k_{m,_{t}})\, v_n\,\di x  + \int_{\Omega}\nabla\theta^k _{m}\nabla v_n\, \di x & +\int_{\Omega} f\big(\TC_k(\tilde{\theta}+ \theta_{m}^k )\big)\mathrm{div} (\tilde{u}_t + u^k _{m,_{t}})\, v_n \,\di x
\\[1ex]
& =  \int_{\Omega} \TC_k \big(\big\{|\dev(T^k_m)|-\beta(\theta_m^k+\tilde{\theta})\big\}^{r}_{+}|\dev(T^k_m)|  \big)\,v_n\, \di x
\end{aligned}
\end{equation}
for a.a. $t\in (0,\T]$ and for every $n=1, ... ,m$. Initial conditions for \eqref{app_system} have the following form 
\begin{equation}
\begin{split}
\left( u^k_{m}(x,0), w_n\right) &= \left( u_0,w_n \right) \qquad n=1,..,m, \\
\left( \theta^k_{m}(x,0), v_n\right) &= \left( \TC_k(\theta_0),v_n \right) \,\,\, n=1,..,m, \\
\left( T^k_m(x,0), \tau_n \right) &= \left( T_0,\tau_n \right) \qquad n=1,..,m,
\end{split}
\label{eq:warunki_pocz_app}
\end{equation}
where $\big(\cdot,\cdot\big)$ denotes the inner product in $L^2(\Omega)$ or in $L^2(\Omega,\mathbb{R}^3)$ or in $L^2(\Omega,\S)$.
Using the form of approximate solutions \eqref{eq:postac} in  momentum equation \eqref{app_system}$_{(1)}$ and the fact that the sets $\{\varepsilon( w_n)\}_{n=1}^k$ is orthogonal, we obtain 
\begin{equation}
\label{aproxmomentum}
\lambda_n(\alpha_{k,m}^n(t))_t=-\int_{\Omega}\Big(\sum_{i=1}^m\gamma_{k,m}^i(t) \tau_i-f\big(\TC_k(\tilde{\theta}+\sum_{i=1}^m\beta_{k,m}^i(t) v_i)\big)\Big)\varepsilon(w_n)\, \di x
\end{equation}
for every $n=1,..,m$. Considering the heat equation \eqref{app_system}$_{(3)}$ we have
\begin{equation}
\begin{aligned}
(\beta_{k,m}^n(t))_t&+\mu_n \beta_{k,m}^n(t) +\sum_{i=1}^m(\alpha_{k,m}^i(t))_t\int_{\Omega}f\big(\TC_k(\tilde{\theta}+ \sum_{i=1}^m\beta_{k,m}^i(t) v_i)\big){\div}(w_i) v_n \di x
\\[1ex]&
+ \int_{\Omega} f\big(\TC_k(\tilde{\theta}+ \sum_{i=1}^m \beta_{k,m}^i(t) v_i)\big) {\div}\tilde{u}_t\,
v_n\di x
\\
&= \int_{\Omega} \TC_k \Big(\big\{\big|\sum_{i=1}^m\gamma_{k,m}^i(t) \dev(\tau_i)\big|-\beta\big(\sum_{i=1}^m \beta_{k,m}^i(t) v_i+\tilde{\theta}\big)\big\}^{r}_{+}\big|\sum_{i=1}^m\gamma_{k,m}^i(t) \dev(\tau_i)\big|  \Big)\,v_n\, \di x
\end{aligned}
\label{app_system20aa}
\end{equation}
for every $n=1,..,m$. Considering equations for visco-elastic strain tensor \eqref{app_system}$_{(2)}$ we get
\begin{equation}
\begin{split}
 &\sum_{i=1}^m(\gamma_{k,m}^i(t))_t\int_{\Omega}\D^{-1}\tau_i\,\tau_n\,\di x\\[1ex]
 &+\sum_{i=1}^m \int_{\Omega}\big\{\big|\sum_{i=1}^m\gamma_{k,m}^i(t) \dev(\tau_i)\big|-\beta\big(\sum_{i=1}^m \beta_{k,m}^i(t) v_i+\tilde{\theta}\big)\big\}^{r}_{+}\,\frac{\sum\limits_{i=1}^m\gamma_{k,m}^i(t) \dev(\tau_i)}{\big|\sum\limits_{i=1}^m\gamma_{k,m}^i(t) \dev(\tau_i)\big|}\,\tau_n\,\di x
\\[2ex]
&+\frac{1}{k}\sum_{i=1}^m\int_{\Omega}\big|\sum_{i=1}^m\gamma_{k,m}^i(t) \dev(\tau_i)\big|^{2r-1}\,\frac{\sum\limits_{i=1}^m\gamma_{k,m}^i(t) \dev(\tau_i)}{\big|\sum\limits_{i=1}^m\gamma_{k,m}^i(t) \dev(\tau_i)\big|}\,\tau_n\,\di x
\\[1ex]
&=\sum_{i=1}^m(\alpha_{k,m}^i(t))_t\int_{\Omega}\ve(w_n)\,\tau_n\,\di x+\int_{\Omega}\ve(\tilde{u}_t)\,\tau_n\,\di x\,,
\end{split}
\label{app_systemforT}
\end{equation}
for every $n=1,..,m$. Let us define 
\begin{equation}
\chi(t) = (\alpha_{k,m}^1(t),...,\alpha_{k,m}^m(t),\beta_{k,m}^1(t),...,\beta_{k,m}^m(t),\gamma_{k,m}^1(t),..., \gamma_{k,m}^m(t))^T .
\nonumber
\end{equation} 
In this article we consider the isotropic materials, hence the matrix $\{(\D^{-1}\tau_i,\tau_n)\}_{i,n=1}^m$ is nonsingular. Multiplying the system of  equations \eqref{app_systemforT} by the inverse matrix of $\{(\D^{-1}\tau_i,\tau_n)\}_{i,n=1}^m$ we obtain
\begin{equation}
(\gamma_{k,m}^n(t))_t=\tilde{G}(\chi(t),t)\,,
\label{app_systemforT1}
\end{equation}
where for fixed approximate parameters $k,m\in\mathbb{N}$ function $\tilde{G}(\cdot,\cdot)$ is measurable with respect to $t$, continuous with respect to $\chi$ and for every $t$ function $\tilde{G}(\cdot,t)$ is bounded. From \eqref{aproxmomentum}, \eqref{app_system20aa} and \eqref{app_systemforT1} we deduce that the system \eqref{app_system} with initial conditions \eqref{eq:warunki_pocz_app} can be written in the following form
\begin{equation}\label{47}
\begin{split}
&\frac{\di\chi}{\di t}  = G(\chi(t),t)\,,
\qquad
t\in [0,T),
\\
&\chi(0) =\chi_{0}\,.
\end{split}
\end{equation}
For fixed approximate parameters $k,m\in\mathbb{N}$ function $G(\cdot,\cdot)$ is measurable with respect to $t$, continuous with respect to $\chi$ and for every $t$ function $G(\cdot,t)$ is bounded. Using Carath\'eodory theorem, see \cite[Theorem 3.4, Appendix]{maleknecas} or \cite[Appendix $(61)$]{zeidlerB}, we obtain that for some positive $t^*$ there exists absolutely continuous function $\chi$ on time interval $[0,t^*]$. Thus, there exist absolutely continuous functions $\alpha_{k,m}^n(t)$, $\beta_{k,m}^n(t)$ and $\gamma_{k,m}^n(t)$ on time interval $[0,t^*]$ and for every $n \leq m$.
\begin{remark}
Due to Carath\'edory theorem we obtain the local existence of approximate solutions. Global existence of approximate solutions is a consequence of uniform boundedness of solutions which will be proved in the next subsection.
\end{remark}
\subsection{Boundedness of approximate solutions}
In this section, with $k\in \mathbb{N}$ fixed, we focus on estimating the sequence $(u_{m}^k, \theta_{m}^k, T_m^k)$ regardless of the parameter $m$. Firstly, we will prove the following energy inequality.
\begin{tw}
\label{oszacowanie0}
For every $k\in \mathbb{N}$, the following energy estimates
\begin{equation*} 
\begin{aligned}
&\sup_{t\in (0,\T)}\|T^k_{m}(t)\|^2_{L^2(\Omega)} + \sup_{t\in (0,\T)}\|\theta_m^k(t)\|^2_{L^2(\Omega)}+
\|\ve(u^k _{m,_{t}})\|^2_{L^2(0,\T;L^2(\Omega))}
 \\[1ex]
  &\hspace{2ex}+ \|\theta_m^k(t)\|^2_{L^2(0,\T;H^1(\Omega))}+ \frac{1}{k}\int_0^t\int_{\Omega}|\dev(T^k_m)|^{2r}\,\di x\,\di\tau\\[1ex]
 &\hspace{4ex}+  \int_0^t\int_{\Omega}\big\{|\dev(T^k_m)|-\beta(\theta^k_m+\tilde{\theta})\big\}^{r}_{+}\,|\dev(T^k_m)|\,\di x\,\di\tau \hspace{1ex}\leq \hspace{1ex} C 
\hspace{2ex}
\end{aligned}
\label{oszacowanie15}
\end{equation*}
is satisfied, where the constant $C>0$ does not depend on $m$
\end{tw}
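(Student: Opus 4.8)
The plan is to prove the estimate by the standard energy method applied to the Galerkin system \eqref{app_system}, using $u^k_{m,t}$, $T^k_m$ and $\theta^k_m$ as test functions in the three equations, respectively (i.e. forming the linear combinations $\sum_n(\alpha^n_{k,m})_t\cdot(\text{eq.}_n)$, $\sum_n\gamma^n_{k,m}\cdot(\text{eq.}_n)$, $\sum_n\beta^n_{k,m}\cdot(\text{eq.}_n)$). These are admissible test functions since each is a finite combination of the corresponding basis, and the manipulations are legitimate a.e.\ in time because the Carath\'eodory theorem already provides absolutely continuous coefficients on the maximal interval of existence. Two structural facts drive the argument: first, the Temam-type way of writing the system makes the coupling term $\int_\Omega T^k_m\,\ve(u^k_{m,t})\,\di x$ cancel when the momentum and stress identities are combined; second --- and this is what is specific to a \emph{fixed} truncation level $k$ --- the maps $r\mapsto f(\TC_k(r))$ and $r\mapsto\TC_k(r)$ are bounded by $C_k:=\max_{|s|\le k}|f(s)|$ and by $k$, so every remaining right-hand side term is controllable by Young's inequality.

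Carrying this out: testing \eqref{app_system}$_{(1)}$ by $u^k_{m,t}$ and using $\id\cdot\ve(u^k_{m,t})=\mathrm{tr}\,\ve(u^k_{m,t})=\mathrm{div}\,u^k_{m,t}$ gives $\int_\Omega T^k_m\,\ve(u^k_{m,t})\,\di x-\int_\Omega f(\TC_k(\tilde\theta+\theta^k_m))\,\mathrm{div}\,u^k_{m,t}\,\di x+\|\D^{1/2}\ve(u^k_{m,t})\|^2_{L^2(\Omega)}=0$. Testing \eqref{app_system}$_{(2)}$ by $T^k_m$, and using that $\dev(T^k_m)/|\dev(T^k_m)|$ is deviatoric (hence its Frobenius contraction with $T^k_m$ equals $|\dev(T^k_m)|$) together with $\int_\Omega\D^{-1}T^k_{m,t}T^k_m\,\di x=\tfrac12\tfrac{\di}{\di t}\|\D^{-1/2}T^k_m\|^2_{L^2(\Omega)}$, gives $\tfrac12\tfrac{\di}{\di t}\|\D^{-1/2}T^k_m\|^2_{L^2}+\int_\Omega\{|\dev(T^k_m)|-\beta(\theta^k_m+\tilde\theta)\}^r_+|\dev(T^k_m)|\,\di x+\tfrac1k\int_\Omega|\dev(T^k_m)|^{2r}\,\di x=\int_\Omega(\ve(u^k_{m,t})+\ve(\tilde u_t))\,T^k_m\,\di x$. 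Substituting the first identity into the second eliminates $\int_\Omega T^k_m\,\ve(u^k_{m,t})\,\di x$ and puts $\|\D^{1/2}\ve(u^k_{m,t})\|^2_{L^2}$ on the left with the correct sign. Finally testing \eqref{app_system}$_{(3)}$ by $\theta^k_m$ gives $\tfrac12\tfrac{\di}{\di t}\|\theta^k_m\|^2_{L^2}+\|\nabla\theta^k_m\|^2_{L^2}=-\int_\Omega f(\TC_k(\tilde\theta+\theta^k_m))\,\mathrm{div}(\tilde u_t+u^k_{m,t})\,\theta^k_m\,\di x+\int_\Omega\TC_k(\{|\dev(T^k_m)|-\beta(\theta^k_m+\tilde\theta)\}^r_+|\dev(T^k_m)|)\,\theta^k_m\,\di x$; adding this to the previous identity yields one relation whose left-hand side contains $\tfrac12\tfrac{\di}{\di t}(\|\D^{-1/2}T^k_m\|^2_{L^2}+\|\theta^k_m\|^2_{L^2})$, the two nonnegative dissipation terms, $\|\D^{1/2}\ve(u^k_{m,t})\|^2_{L^2}$ and $\|\nabla\theta^k_m\|^2_{L^2}$.

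To close it I would estimate the right-hand side term by term. Using $|f(\TC_k(\cdot))|\le C_k$, $|\mathrm{div}\,v|\le\sqrt3\,|\ve(v)|$ and $\|\D^{1/2}\ve(v)\|^2_{L^2}=\int_\Omega\D\ve(v)\cdot\ve(v)\,\di x\ge c_\D\|\ve(v)\|^2_{L^2}$, the terms $\int_\Omega f(\TC_k(\cdot))\,\mathrm{div}\,u^k_{m,t}\,\di x$ and $\int_\Omega f(\TC_k(\cdot))\,\mathrm{div}\,u^k_{m,t}\,\theta^k_m\,\di x$ are bounded, by Young's inequality, by $\tfrac{c_\D}{4}\|\ve(u^k_{m,t})\|^2_{L^2}+C(k)(1+\|\theta^k_m\|^2_{L^2})$, which is absorbed on the left; the terms $\int_\Omega\ve(\tilde u_t)\,T^k_m\,\di x$, $\int_\Omega f(\TC_k(\cdot))\,\mathrm{div}\,\tilde u_t\,\theta^k_m\,\di x$ and $\int_\Omega\TC_k(\cdot)\,\theta^k_m\,\di x$ are bounded by $\|T^k_m\|^2_{L^2}+\|\theta^k_m\|^2_{L^2}+h(t)$ with $h(t)=C(k)(1+\|\ve(\tilde u_t(t))\|^2_{L^2})\in L^1(0,\T)$, the integrability holding because \eqref{regularity} and $\frac{1}{1-\alpha}>2$ give $\tilde u_t\in L^2(0,\T;H^1(\Omega;\R^3))$. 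Using $\|T^k_m\|^2_{L^2}\le C\|\D^{-1/2}T^k_m\|^2_{L^2}$ I obtain $\tfrac{\di}{\di t}(\|\D^{-1/2}T^k_m\|^2_{L^2}+\|\theta^k_m\|^2_{L^2})+(\text{nonnegative terms})\le C(k)(\|\D^{-1/2}T^k_m\|^2_{L^2}+\|\theta^k_m\|^2_{L^2})+h(t)$. Since the initial values satisfy $\|\D^{-1/2}T^k_m(0)\|^2_{L^2}\le C\|T_0\|^2_{L^2}$ and $\|\theta^k_m(0)\|^2_{L^2}\le\|\TC_k(\theta_0)\|^2_{L^2}\le k^2|\Omega|$, both finite and $m$-independent, Gr\"onwall's lemma gives $\sup_{t\in(0,\T)}(\|T^k_m(t)\|^2_{L^2}+\|\theta^k_m(t)\|^2_{L^2})\le C$ with $C$ depending on $k$ and the data but not on $m$; inserting this back and integrating over $(0,\T)$ controls $\|\ve(u^k_{m,t})\|^2_{L^2(0,\T;L^2)}$, $\|\nabla\theta^k_m\|^2_{L^2(0,\T;L^2)}$, $\tfrac1k\int_0^\T\!\int_\Omega|\dev(T^k_m)|^{2r}\,\di x\,\di\tau$ and $\int_0^\T\!\int_\Omega\{|\dev(T^k_m)|-\beta(\theta^k_m+\tilde\theta)\}^r_+|\dev(T^k_m)|\,\di x\,\di\tau$; combined with $\|\theta^k_m\|^2_{L^2(0,\T;H^1)}\le\T\sup_t\|\theta^k_m(t)\|^2_{L^2}+\|\nabla\theta^k_m\|^2_{L^2(0,\T;L^2)}$ this is exactly the asserted estimate. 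As a by-product, this $m$-uniform bound excludes blow-up of the Galerkin ODE and promotes the local solution of \eqref{47} to a global one on $[0,\T]$.

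I expect the only genuinely delicate point to be the cross term $\int_\Omega f(\TC_k(\tilde\theta+\theta^k_m))\,\mathrm{div}\,u^k_{m,t}\,\di x$ and its $\theta^k_m$-weighted variant: in the original problem this is the notoriously hard, merely $L^1$, dissipation, but here the truncation $\TC_k$ placed inside $f$ makes $f\circ\TC_k$ bounded, so for each fixed $k$ it is harmless and can be absorbed into the viscous term $\|\D^{1/2}\ve(u^k_{m,t})\|^2_{L^2}$. The price is that the constant $C$ above depends on $k$ (through $C_k$ and through $\|\TC_k(\theta_0)\|_{L^2}$) and degenerates as $k\to\infty$; recovering compactness in the passage $k\to\infty$ is a separate matter, treated later by Young measures, the Boccardo--Gallou\"et technique, Blanchard's truncations for the heat equation and the Minty--Browder argument, and lies outside the scope of this theorem.
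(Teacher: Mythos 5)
Your proposal is correct and follows essentially the same route as the paper: testing the three Galerkin equations by $u^k_{m,t}$, $T^k_m$ and $\theta^k_m$, exploiting the cancellation of the coupling term $\int_\Omega T^k_m\,\ve(u^k_{m,t})\,\di x$, using the boundedness of $f\circ\TC_k$ at fixed $k$, absorbing the remaining terms via Cauchy/Young with small weight, and closing with Gronwall. The only cosmetic difference is that you control $\mathrm{div}\,u^k_{m,t}$ pointwise by $|\ve(u^k_{m,t})|$, whereas the paper keeps $\|u^k_{m,t}\|_{L^2}$ and invokes Korn's inequality; both are fine.
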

\begin{proof}
 Multiplying the equation $(\ref{app_system})_1$ by $(\alpha_{k,m}^n(t))_{,_{t}}$, the equation  $(\ref{app_system})_2$ by $\gamma_{k,m}^n(t)$, the equation $(\ref{app_system})_3$ by $\beta_{k,m}^n(t)$ and summing up the results over $n=1,\ldots,m$ we obtain
\begin{equation}
\begin{aligned}
\int_{\Omega}  (T_{m}^k - f\big(\TC_k(\tilde{\theta}+\theta_{m}^k )\big)\id) \ve(u^k _{m,_{t}})\, \di x &+ \int_{\Omega}\D(\varepsilon(u^k _{m,_{t}})) \,\ve(u^k _{m,_{t}})\,\di x = 0\,,
\\[1ex]
\int_{\Omega}\D^{-1} T^k_{m,_{t}}\,T^k_m\,\di x+ \int_{\Omega}\big\{&|\dev(T^k_m)|-\beta(\theta^k_m+\tilde{\theta})\big\}^{r}_{+}\,|\dev(T^k_m)|\,\di x
\\[1ex]
&+\frac{1}{k}\int_{\Omega}|\dev(T^k_m)|^{2r}\,\di x
\\[1ex]
&=\int_{\Omega}\big(\ve(u^k_{m,_{t}})+\ve(\tilde{u}_t)\big)\,T^k_m\,\di x\,,
\\[1ex]
\int_{\Omega}(\theta^k_{m,_{t}})\, \theta_m^k\,\di x  + \int_{\Omega}|\nabla\theta^k _{m}|^2\, \di x & +\int_{\Omega} f\big(\TC_k(\tilde{\theta}+ \theta_{m}^k )\big)\mathrm{div} (\tilde{u}_t + u^k _{m,_{t}})\, \theta_m^k \,\di x
\\[1ex]
 =  \int_{\Omega} \TC_k \big(\big\{&|\dev(T^k_m)|-\beta(\theta_m^k+\tilde{\theta})\big\}^{r}_{+}|\dev(T^k_m)|  \big)\,\theta_m^k\, \di x\,.
\end{aligned}
\label{oszacowanie11}
\end{equation}
Adding up all equations in \eqref{oszacowanie11} we get 
\begin{equation}
\begin{aligned}
&\frac{1}{2}\frac{\di}{\di t}\int_{\Omega}\D^{-1} T^k_{m}\,T^k_m\,\di x+
 \int_{\Omega}\D(\varepsilon(u^k _{m,_{t}})) \,\ve(u^k _{m,_{t}})\,\di x\\[1ex]
 &\hspace{2ex}+ \int_{\Omega}\big\{|\dev(T^k_m)|-\beta(\theta^k_m+\tilde{\theta})\big\}^{r}_{+}\,|\dev(T^k_m)|\,\di x \\[1ex]
 &\hspace{4ex}+\frac{1}{k}\int_{\Omega}|\dev(T^k_m)|^{2r}\,\di x +\frac{1}{2}\frac{\di}{\di t} \int_{\Omega}|\theta_m^k|^2\,\di x +  \int_{\Omega}|\nabla\theta^k _{m}|^2\, \di x
\\[1ex]
 &= \int_{\Omega}  f\big(\TC_k(\tilde{\theta}+\theta_{m}^k )\big) \mathrm{div}(u^k _{m,_{t}})\, \di x +\int_{\Omega}\ve(\tilde{u}_t)\,T^k_m\,\di x\\[1ex]
 &\hspace{2ex}- \int_{\Omega} f\big(\TC_k(\tilde{\theta}+ \theta_{m}^k )\big)\mathrm{div} (\tilde{u}_t + u^k _{m,_{t}})\, \theta_m^k \,\di x
\\[1ex]
&\hspace{4ex}+\int_{\Omega} \TC_k \big(\big\{|\dev(T^k_m)|-\beta(\theta_m^k+\tilde{\theta})\big\}^{r}_{+}|\dev(T^k_m)|  \big)\,\theta_m^k\, \di x\,.
\end{aligned}
\label{oszacowanie12}
\end{equation}
For a.a. $t\in[0,\T]$ we integrate above mentioned equation over $(0,t)$ and use the H\"older inequality. It leads us to
\begin{equation}
\begin{aligned}
&\frac{1}{2}\int_{\Omega}\D^{-1} T^k_{m}(t)\,T^k_m(t)\,\di x+
 \int_0^t\int_{\Omega}\D(\varepsilon(u^k _{m,_{t}})) \,\ve(u^k _{m,_{t}})\,\di x\, \di\tau\\[1ex]
 &\hspace{2ex}+ \int_0^t\int_{\Omega}\big\{|\dev(T^k_m)|-\beta(\theta^k_m+\tilde{\theta})\big\}^{r}_{+}\,|\dev(T^k_m)|\,\di x\,\di\tau\\[1ex]
 &\hspace{4ex}+\frac{1}{k}\int_0^t\int_{\Omega}|\dev(T^k_m)|^{2r}\,\di x\,\di\tau+ \frac{1}{2}\int_{\Omega}|\theta_m^k(t)|^2\,\di x +  \int_0^t\int_{\Omega}|\nabla\theta^k _{m}|^2\, \di x\,\di\tau
\\[1ex]
&\leq \hspace{1ex} c\| T^k_{m}(0)\|^2_{L^2(\Omega)} + \frac{1}{2}\|\theta_m^k(0)\|^2_{L^2(\Omega)}+ \int_0^t\big\|f\big(\TC_k(\tilde{\theta}+\theta_{m}^k )\big)\big\|_{L^2(\Omega)} \|\ve(u^k _{m,_{t}})\|_{L^2(\Omega)}\,\di\tau \\[1ex]
 &\hspace{2ex} + \int_0^t\|\ve(\tilde{u}_t)\|_{L^2(\Omega)}\|T^k_m\|_{L^2(\Omega)}\,\di\tau\\[1ex] 
 &\hspace{4ex}+ \int_0^t \big\|f\big(\TC_k(\tilde{\theta}+ \theta_{m}^k )\big)\big\|_{L^\infty(\Omega)}\|\tilde{u}_t + u^k _{m,_{t}}\|_{L^2(\Omega)}\|\theta_m^k\|_{L^2(\Omega)} \,\di\tau
\\[1ex]
&\hspace{6ex}+\int_0^t\big\|\TC_k \big(\big\{|\dev(T^k_m)|-\beta(\theta_m^k+\tilde{\theta})\big\}^{r}_{+}|\dev(T^k_m)|  \big)\big\|_{L^2(\Omega)}\|\theta_m^k\|_{L^2(\Omega)}\, \di \tau\,,
\end{aligned}
\label{oszacowanie123}
\end{equation}
where the constant $c>0$ does not depend on $m$. The assumptions on the initial data imply that the initial terms on the right hand side of \eqref{oszacowanie123} are bounded independently on $m$. Furthermore the properties of cut-off function $\TC_k$ yields 
\begin{equation}
    \label{oszacowanie13} \big\|f\big(\TC_k(\tilde{\theta}+ \theta_{m}^k )\big)\big\|_{L^{\infty}(0,\T;L^\infty(\Omega))}\leq \sup\limits_{-k\leq \xi\leq k}|f(\xi)|\,.
\end{equation}
The continuity of the function $f$ entails that the sequence $\{f\big(\TC_k(\tilde{\theta}+ \theta_{m}^k )\big)\}_{m=1}^{\infty}$ is uniformly bounded in $L^{\infty}(0,\T;L^{\infty}(\Omega))$. Applying Cauchy inequality with small weight to all integrals occurring on the right hand side of \eqref{oszacowanie123} we obtain   
\begin{equation} 
\begin{aligned}
&\frac{1}{2}\int_{\Omega}\D^{-1} T^k_{m}(t)\,T^k_m(t)\,\di x+
 \int_0^t\int_{\Omega}\D(\varepsilon(u^k _{m,_{t}})) \,\ve(u^k _{m,_{t}})\,\di x\, \di\tau\\[1ex]
 &\hspace{2ex}+ \int_0^t\int_{\Omega}\big\{|\dev(T^k_m)|-\beta(\theta^k_m+\tilde{\theta})\big\}^{r}_{+}\,|\dev(T^k_m)|\,\di x\,\di\tau \\[1ex]
 &\hspace{4ex}+\frac{1}{k}\int_0^t\int_{\Omega}|\dev(T^k_m)|^{2r}\,\di x\,\di\tau+ \frac{1}{2}\int_{\Omega}|\theta_m^k(t)|^2\,\di x +  \int_0^t\int_{\Omega}|\nabla\theta^k _{m}|^2\, \di x\,\di\tau
\\[1ex]
&\leq \hspace{1ex} C+ \eta \int_0^t \|\ve(u^k _{m,_{t}})\|^2_{L^2(\Omega)}\,\di\tau  + \eta\int_0^t\|T^k_m\|^2_{L^2(\Omega)}\,\di\tau + \eta\int_0^t\|u^k _{m,_{t}}\|^2_{L^2(\Omega)}\,\di\tau\\[1ex]
&\hspace{2ex} +D(\eta)\int_0^t\|\theta_m^k\|^2_{L^2(\Omega)} \,\di\tau\,,
\hspace{2ex}
\end{aligned}
\label{oszacowanie14}
\end{equation}
where the constant $C$ is a positive constant that does not depend on $m$ and $\eta>0$ is a sufficiently small constant. Observe that the constant $C$ depends only on the given data having regularity specified in \eqref{regularity}. Using the Korn's inequality in the third integral on the right hand side of \eqref{oszacowanie14}, the properties of the elasticity tensor $\D$ and selecting sufficiently small $\eta>0$ we arrive to the following estimate
\begin{equation} 
\begin{aligned}
&\frac{1}{2}\int_{\Omega}\D^{-1} T^k_{m}(t)\,T^k_m(t)\,\di x+
 \int_0^t\int_{\Omega}\D(\varepsilon(u^k _{m,_{t}})) \,\ve(u^k _{m,_{t}})\,\di x\, \di\tau
 \\[1ex]
 &\hspace{2ex}+ \int_0^t\int_{\Omega}\big\{|\dev(T^k_m)|-\beta(\theta^k_m+\tilde{\theta})\big\}^{r}_{+}\,|\dev(T^k_m)|\,\di x\,\di\tau\\[1ex]
 & \hspace{4ex}+ \frac{1}{k}\int_0^t\int_{\Omega}|\dev(T^k_m)|^{2r}\,\di x\,\di\tau
+ \frac{1}{2}\int_{\Omega}|\theta_m^k(t)|^2\,\di x +  \int_0^t\int_{\Omega}|\nabla\theta^k _{m}|^2\, \di x\,\di\tau
\\[1ex]
&\leq \hspace{1ex} C+D\int_0^t\|\theta_m^k\|^2_{L^2(\Omega)} \,\di\tau\,.
\hspace{2ex}
\end{aligned}
\label{oszacowanie151}
\end{equation}
Using the Gronwall's lemma the proof is complete.
\end{proof}
The next step are estimates for the time derivatives of the sequence $(\theta_{m}^k, T_m^k)$ regardless of $m$. This will result from the following two theorems, which are crucial in the proof of Theorem \ref{istnieniek}.
\begin{tw}
There exist constant $\tilde{C}$ independent on $m$, such that the following inequality holds
\begin{equation*}
\|\theta_{m,_{t}}^k\|^2_{L^2(0,\T;L^2(\Omega))} +
\sup\limits_{t\in(0,\T]
)}\|\theta_m^k(t)\|^2_{H_0^1(\Omega)} \leq\hspace{1ex} \tilde{C}\,.
\end{equation*}
\label{oszacowanie1}
\end{tw}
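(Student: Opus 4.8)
The plan is to test the approximate heat equation by its own time derivative. Multiplying $(\ref{app_system})_{3}$ by $(\beta_{k,m}^{n}(t))_{,_{t}}$ and summing over $n=1,\dots,m$ -- which is legitimate since $\theta^{k}_{m,_{t}}$ lies in $\mathrm{span}\{v_{1},\dots,v_{m}\}$ -- gives, for almost every $t\in(0,\T)$,
\begin{align*}
&\|\theta^{k}_{m,_{t}}(t)\|^{2}_{L^{2}(\Omega)}+\frac12\frac{\di}{\di t}\|\nabla\theta^{k}_{m}(t)\|^{2}_{L^{2}(\Omega)}\\
&\qquad=-\int_{\Omega}f\big(\TC_{k}(\tilde{\theta}+\theta^{k}_{m})\big)\,\dyw(\tilde{u}_{t}+u^{k}_{m,_{t}})\,\theta^{k}_{m,_{t}}\,\di x\\
&\qquad\quad+\int_{\Omega}\TC_{k}\big(\{|\dev(T^{k}_{m})|-\beta(\theta^{k}_{m}+\tilde{\theta})\}^{r}_{+}\,|\dev(T^{k}_{m})|\big)\,\theta^{k}_{m,_{t}}\,\di x\,.
\end{align*}
The Laplacian term yields the exact derivative $\tfrac12\tfrac{\di}{\di t}\|\nabla\theta^{k}_{m}\|^{2}_{L^{2}(\Omega)}$, while the two terms on the right have to be controlled using the boundedness built into the truncations.

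More precisely, the continuity of $f$ makes $M_{k}:=\sup_{|\xi|\le k}|f(\xi)|$ finite, and since $|\TC_{k}(\,\cdot\,)|\le k$ we have $\|f(\TC_{k}(\tilde{\theta}+\theta^{k}_{m}))\|_{L^{\infty}(\Omega)}\le M_{k}$ and $0\le\TC_{k}(\,\cdot\,)\le k$. Estimating the first right-hand term by $M_{k}\|\dyw(\tilde{u}_{t}+u^{k}_{m,_{t}})\|_{L^{2}(\Omega)}\|\theta^{k}_{m,_{t}}\|_{L^{2}(\Omega)}$ and the second by $k|\Omega|^{1/2}\|\theta^{k}_{m,_{t}}\|_{L^{2}(\Omega)}$, then applying Young's inequality with a sufficiently small weight, absorbing the resulting $\|\theta^{k}_{m,_{t}}\|^{2}_{L^{2}(\Omega)}$ contribution, and integrating over $(0,t)$ for an arbitrary $t\in(0,\T]$, one arrives at
\begin{align*}
&\frac12\int_{0}^{t}\|\theta^{k}_{m,_{\tau}}\|^{2}_{L^{2}(\Omega)}\,\di\tau+\frac12\|\nabla\theta^{k}_{m}(t)\|^{2}_{L^{2}(\Omega)}\\
&\qquad\le\frac12\|\TC_{k}(\theta_{0})\|^{2}_{H^{1}(\Omega)}+c\int_{0}^{\T}\Big(M_{k}^{2}\,\|\dyw(\tilde{u}_{t}+u^{k}_{m,_{t}})\|^{2}_{L^{2}(\Omega)}+k^{2}|\Omega|\Big)\di\tau\,,
\end{align*}
where for the initial term I used that $\theta^{k}_{m}(0)$ is the $L^{2}(\Omega)$-orthogonal projection of $\TC_{k}(\theta_{0})$ onto $\mathrm{span}\{v_{1},\dots,v_{m}\}$ and that this projection is a contraction in $H^{1}(\Omega)$, the $v_{n}$ being eigenfunctions of the Neumann Laplacian.

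It remains to bound the right-hand side independently of $m$. Since $\dyw w=\mathrm{tr}\,\varepsilon(w)$ one has $|\dyw w|\le\sqrt{3}\,|\varepsilon(w)|$ pointwise, whence $\int_{0}^{\T}\|\dyw u^{k}_{m,_{t}}\|^{2}_{L^{2}(\Omega)}\,\di\tau\le 3\int_{0}^{\T}\|\varepsilon(u^{k}_{m,_{t}})\|^{2}_{L^{2}(\Omega)}\,\di\tau\le C$ by Theorem \ref{oszacowanie0}, with $C$ independent of $m$, and $\int_{0}^{\T}\|\dyw\tilde{u}_{t}\|^{2}_{L^{2}(\Omega)}\,\di\tau\le 3\|\varepsilon(\tilde{u}_{t})\|^{2}_{L^{2}(0,\T;L^{2}(\Omega))}<\infty$, the latter being finite because $\varepsilon(\tilde{u}_{t})\in L^{\frac{1}{1-\alpha}}(0,\T;L^{\frac{1}{1-\alpha}}(\Omega))$ with $\frac{1}{1-\alpha}>2$ (recall $\alpha>\tfrac12$) and $\Omega\times(0,\T)$ bounded. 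Collecting these facts yields a constant $\tilde{C}$, depending only on $k$ and the data of \eqref{regularity} but not on $m$, with $\|\theta^{k}_{m,_{t}}\|^{2}_{L^{2}(0,\T;L^{2}(\Omega))}+\sup_{t\in(0,\T]}\|\nabla\theta^{k}_{m}(t)\|^{2}_{L^{2}(\Omega)}\le\tilde{C}$; combining this with the $L^{\infty}(0,\T;L^{2}(\Omega))$-bound on $\theta^{k}_{m}$ already contained in Theorem \ref{oszacowanie0} gives the asserted estimate.

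The point that demands care -- the genuine difference from the energy estimate of Theorem \ref{oszacowanie0} -- is that here the coupling term $f(\TC_{k}(\,\cdot\,))\dyw(\,\cdot\,)$ and the dissipation term are paired with $\theta^{k}_{m,_{t}}$, so none of the structural cancellations of Theorem \ref{oszacowanie0} are available; the whole estimate rests on the truncations, which make $f(\TC_{k}(\,\cdot\,))$ and $\TC_{k}(\,\cdot\,)$ bounded by constants that are allowed to blow up as $k\to\infty$ but are fixed in $m$, together with the $m$-uniform $L^{2}(0,\T;L^{2}(\Omega))$-control of $\varepsilon(u^{k}_{m,_{t}})$ supplied by Theorem \ref{oszacowanie0}. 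A minor extra verification is $\dyw\tilde{u}_{t}\in L^{2}(0,\T;L^{2}(\Omega))$, which is precisely why the exponent $\frac{1}{1-\alpha}$ is required to be strictly above $2$.
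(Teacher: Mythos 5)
Your proof follows essentially the same route as the paper's: you test $(\ref{app_system})_3$ with $\theta^k_{m,_{t}}$ (multiply by $(\beta_{k,m}^n(t))_t$ and sum over $n$), exploit the $k$-dependent but $m$-independent bounds that the truncations $\TC_k$ force on $f\big(\TC_k(\tilde{\theta}+\theta^k_m)\big)$ and on the right-hand side, invoke the $m$-uniform $L^2(0,\T;L^2(\Omega))$ control of $\ve(u^k_{m,_{t}})$ from Theorem \ref{oszacowanie0}, and close by Young's inequality and absorption -- exactly what the paper compresses into ``standard tools for parabolic equations.'' The one point where you add detail beyond the paper is the initial term, where your bound $\tfrac12\|\TC_k(\theta_0)\|^2_{H^1(\Omega)}$ tacitly assumes $\TC_k(\theta_0)\in H^1(\Omega)$, which does not follow from the stated hypothesis $\theta_0\in L^1(\Omega)$; note, however, that the paper's own terse proof passes over this same issue in silence, so this is an inherited gap rather than a divergence from the paper's argument.
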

\begin{proof}
Multiplying the equation $(\ref{app_system})_3$ by $(\beta_{k,m}^n(t))_t$ and summing up over $n=1,\ldots,m$ we obtain
\begin{equation}
    \begin{split}
        \int_{\Omega}|\theta^k_{m,_{t}}|^2\,\di x & + \int_{\Omega}\nabla\theta^k _{m}\, \nabla \theta^k_{m,_{t}}  \, \di x  +\int_{\Omega} f\big(\TC_k(\tilde{\theta}+ \theta_{m}^k )\big)\mathrm{div} (\tilde{u}_t + u^k _{m,_{t}})\, \theta^k_{m,_{t}} \,\di x
        \\[1ex] 
        & =  \int_{\Omega} \TC_k \big(\big\{|\dev(T^k_m)|-\beta(\theta_m^k+\tilde{\theta})\big\}^{r}_{+}|\dev(T^k_m)|  \big)\,\theta^k_{m,_{t}}\, \di x\,.
    \end{split}
\end{equation}
Theorem \ref{oszacowanie0} imply that the sequence $\{\ve(u^k _{m,_{t}})\}_{m=1}^\infty$ is bounded (independently on $m$) in $L^2(0,\T;L^2(\Omega;\S))$. Therefore integrating above mentioned equation over $(0,t)$ and using  the standard tools for parabolic equations, see e.g. Evans [25], we complete the proof.
\end{proof}
\begin{tw}
For fixed $k>0$, the Galerkin sequence $\theta_{m}^k$ and $T_m^k$ satisfy also
\label{oszacowanie2}
\begin{equation*}
     \begin{split}
         \int_0^t\int_{\Omega}&\D^{-1} T^k_{m,_{t}}\,T^k_{m,_{t}}\,\di x\,\di\tau+ \frac{1}{r+1} \int_{\Omega}\big\{|\dev(T^k_m(t))|-\beta\big(\theta^k_m(t)+\tilde{\theta(t)}\big)\big\}^{r+1}_{+}\,\di x 
\\[1ex]
&+\frac{1}{2kr}\int_{\Omega}|\dev(T^k_m(t))|^{2r}\,\di x \hspace{1ex}\leq \hspace{1ex} \hat{C}
\end{split}
\end{equation*}
with $\hat{C}$'s independent of $m$ and a.a. $t\in (0,\T)$.
\end{tw}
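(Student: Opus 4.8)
The plan is to test the second Galerkin equation against the time derivative of the stress coefficients. Multiplying $(\ref{app_system})_2$ by $(\gamma^n_{k,m}(t))_t$, summing over $n=1,\dots,m$, and using $\sum_n(\gamma^n_{k,m}(t))_t\tau_n=T^k_{m,t}$, one obtains
\begin{equation*}
\begin{split}
\int_{\Omega}\D^{-1}T^k_{m,t}\,T^k_{m,t}\,\di x&+\int_{\Omega}\big\{|\dev(T^k_m)|-\beta(\theta^k_m+\tilde\theta)\big\}^{r}_{+}\,\frac{\dev(T^k_m)}{|\dev(T^k_m)|}\,T^k_{m,t}\,\di x\\
&+\frac{1}{k}\int_{\Omega}|\dev(T^k_m)|^{2r-1}\,\frac{\dev(T^k_m)}{|\dev(T^k_m)|}\,T^k_{m,t}\,\di x=\int_{\Omega}\big(\ve(u^k_{m,t})+\ve(\tilde u_t)\big)\,T^k_{m,t}\,\di x\,.
\end{split}
\end{equation*}
Since $\dev(T^k_m)\cdot T^k_{m,t}=\dev(T^k_m)\cdot\dev(T^k_{m,t})=|\dev(T^k_m)|\,\frac{\di}{\di t}|\dev(T^k_m)|$ for a.a.\ $t$ (the integrands vanishing where $\dev(T^k_m)=0$, since $\beta\geq0$ and $r>1$), the third term equals $\frac{1}{2kr}\frac{\di}{\di t}\int_{\Omega}|\dev(T^k_m)|^{2r}\,\di x$, while the chain rule applied to the potential $\psi(S,\vartheta):=\frac{1}{r+1}\{|\dev(S)|-\beta(\vartheta)\}^{r+1}_{+}$ rewrites the second term as
\begin{equation*}
\frac{\di}{\di t}\int_{\Omega}\psi\big(T^k_m,\theta^k_m+\tilde\theta\big)\,\di x+\int_{\Omega}\big\{|\dev(T^k_m)|-\beta(\theta^k_m+\tilde\theta)\big\}^{r}_{+}\,\beta'(\theta^k_m+\tilde\theta)\,(\theta^k_{m,t}+\tilde\theta_t)\,\di x\,.
\end{equation*}

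Next I would integrate this identity over $(0,t)$. Since $\D^{-1}$ is positive definite, $\int_0^t\int_{\Omega}\D^{-1}T^k_{m,t}\,T^k_{m,t}\geq c\int_0^t\|T^k_{m,t}\|^2_{L^2(\Omega)}\,\di\tau$, and the left-hand side of the integrated identity is precisely the sum of the three nonnegative quantities in the statement, so it remains to bound the right-hand side independently of $m$. The data terms (the values of $\int_{\Omega}\psi$ and $\frac{1}{2kr}\int_{\Omega}|\dev(\cdot)|^{2r}$ at $\tau=0$) are finite by \eqref{regularity}: indeed $\dev(T_0)\in L^{2r}(\Omega;\SS)$, $r+1\le 2r$ (as $r>1$) and $|\Omega|<\infty$, with the Galerkin initialization chosen stably. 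The elastic coupling term $\int_0^t\int_{\Omega}(\ve(u^k_{m,t})+\ve(\tilde u_t))\,T^k_{m,t}$ is split by Young's inequality with a small weight $\eta$: the part $\eta\int_0^t\|T^k_{m,t}\|^2_{L^2(\Omega)}\,\di\tau$ is absorbed into $\int_0^t\int_{\Omega}\D^{-1}T^k_{m,t}\,T^k_{m,t}$, and the remainder is controlled because $\{\ve(u^k_{m,t})\}_m$ is bounded in $L^2(0,\T;L^2(\Omega))$ by Theorem \ref{oszacowanie0} and $\ve(\tilde u_t)\in L^{\frac{1}{1-\alpha}}(0,\T;L^{\frac{1}{1-\alpha}}(\Omega))\hookrightarrow L^2(0,\T;L^2(\Omega))$ since $\frac{1}{1-\alpha}>2$.

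The only genuinely new term is $-\int_0^t\int_{\Omega}\{|\dev(T^k_m)|-\beta(\theta^k_m+\tilde\theta)\}^{r}_{+}\,\beta'(\theta^k_m+\tilde\theta)\,(\theta^k_{m,t}+\tilde\theta_t)$, which I would estimate using $|\beta'|\le\tilde d$ from (C3) and $0\le\{|\dev(T^k_m)|-\beta(\cdot)\}_+\le|\dev(T^k_m)|$ (valid since $\beta\ge0$ by (C2)): its absolute value is at most
\begin{equation*}
\tilde d\,\big\|\,|\dev(T^k_m)|^{r}\,\big\|_{L^2(0,\T;L^2(\Omega))}\Big(\|\theta^k_{m,t}\|_{L^2(0,\T;L^2(\Omega))}+\|\tilde\theta_t\|_{L^2(0,\T;L^2(\Omega))}\Big)
\end{equation*}
by the Cauchy--Schwarz inequality on $(0,\T)\times\Omega$. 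Here $\big\|\,|\dev(T^k_m)|^{r}\,\big\|^2_{L^2(0,\T;L^2(\Omega))}=\int_0^{\T}\int_{\Omega}|\dev(T^k_m)|^{2r}$ is bounded, for fixed $k$, by $k$ times the constant of Theorem \ref{oszacowanie0}; $\|\theta^k_{m,t}\|_{L^2(0,\T;L^2(\Omega))}$ is bounded independently of $m$ by Theorem \ref{oszacowanie1}; and $\|\tilde\theta_t\|_{L^2(0,\T;L^2(\Omega))}$ is finite by the regularity of \eqref{war_brz_t}. Collecting the estimates and choosing $\eta$ small enough finishes the proof; no Gronwall argument is needed. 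The main obstacle, and the reason the approximate system is built this way, is exactly this product: the regularizing term $\frac{1}{k}|\dev(T^k)|^{2r-1}\frac{\dev(T^k)}{|\dev(T^k)|}$ in $(\ref{AMain1})_2$ delivers the $L^{2r}$-control of $\dev(T^k_m)$ and Theorem \ref{oszacowanie1} the $L^2$-control of $\theta^k_{m,t}$; without either, the term $\{|\dev(T^k_m)|-\beta\}^r_+\,\beta'(\theta^k_m+\tilde\theta)\,\theta^k_{m,t}$ could not be absorbed.
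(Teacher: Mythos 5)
Your proposal is correct and follows essentially the same route as the paper: test $(\ref{app_system})_2$ with $(\gamma^n_{k,m})_t$, use the deviatoric/volumetric orthogonality and the chain rule to produce the time derivatives of $\frac{1}{r+1}\{|\dev(T^k_m)|-\beta\}^{r+1}_+$ and $\frac{1}{2kr}|\dev(T^k_m)|^{2r}$ plus the $\beta'$-remainder, absorb the strain term by Cauchy with small weight, and control the $\beta'$-term via (C3), the $L^2(L^2)$-bound on $\theta^k_{m,t}$ from Theorem \ref{oszacowanie1} and the $L^{2r}$-bound on $\dev(T^k_m)$ (for fixed $k$) from Theorem \ref{oszacowanie0}. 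Your direct estimate $\{|\dev(T^k_m)|-\beta\}^r_+\le|\dev(T^k_m)|^r$ followed by Cauchy--Schwarz is just a slightly streamlined version of the paper's splitting over the set where the bracket is positive; the ingredients and the conclusion are the same.
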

\begin{proof}
 Multiplying the equation $(\ref{app_system})_2$ by $(\gamma_{k,m}^n(t))_t$ and summing up over $n=1,\ldots,m$ we get
 \begin{equation}
     \begin{split}
         \int_{\Omega}&\D^{-1} T^k_{m,_{t}}\,T^k_{m,_{t}}\,\di x+ \int_{\Omega}\big\{|\dev(T^k_m)|-\beta(\theta^k_m+\tilde{\theta})\big\}^{r}_{+}\frac{\dev(T^k_m)}{|\dev(T^k_m)|}\, T^k_{m,_{t}}\,\di x
\\[1ex]
&+\frac{1}{k}\int_{\Omega}|\dev(T^k_m)|^{2r-1}\,\frac{\dev(T^k_m)}{|\dev(T^k_m)|} T^k_{m,_{t}}\,\di x =\int_{\Omega}\big(\ve(u^k_{m,_{t}})+\ve(\tilde{u}_t)\big)\,T^k_{m,_{t}}\,\di x\,.
     \end{split}
     \label{oszacowanie21}
 \end{equation}
Using the fact that the deviatoric part of the stress is orthogonal to it's volumetric part, we conclude that
  \begin{equation}
     \begin{split}
         \int_{\Omega}&\D^{-1} T^k_{m,_{t}}\,T^k_{m,_{t}}\,\di x+\frac{1}{r+1}\frac{\di}{\di t}\Big( \int_{\Omega}\big\{|\dev(T^k_m)|-\beta(\theta^k_m+\tilde{\theta})\big\}^{r+1}_{+}\,\di x\Big)
\\[1ex]
&+\frac{1}{2kr}\frac{\di}{\di t}\Big(\int_{\Omega}|\dev(T^k_m)|^{2r}\,\di x\Big) =\int_{\Omega}\big(\ve(u^k_{m,_{t}})+\ve(\tilde{u}_t)\big)\,T^k_{m,_{t}}\,\di x\\[1ex]
&\hspace{2ex}+\int_{\Omega}\big\{|\dev(T^k_m)|-\beta(\theta^k_m+\tilde{\theta})\big\}^{r}_{+}\beta'(\theta^k_m+\tilde{\theta})\big(\theta^k_{m,_{t}}+\tilde{\theta}_{t}\big)\,\di x\,.
     \end{split}
     \label{oszacowanie22}
 \end{equation}
 Integrating \eqref{oszacowanie22} with respect to time and using H\"older inequality we have

   \begin{equation}
     \begin{split}
         \int_0^t\int_{\Omega}&\D^{-1} T^k_{m,_{t}}\,T^k_{m,_{t}}\,\di x\,\di\tau+ \frac{1}{r+1} \int_{\Omega}\big\{|\dev(T^k_m(t))|-\beta\big(\theta^k_m(t)+\tilde{\theta}(t)\big)\big\}^{r+1}_{+}\,\di x 
\\[1ex]
&+\frac{1}{2kr}\int_{\Omega}|\dev(T^k_m(t))|^{2r}\,\di x \leq \frac{1}{r+1} \int_{\Omega}\big\{|\dev(T^k_m(0))|-\beta\big(\theta^k_m(0)+\tilde{\theta}(0)\big)\big\}^{r+1}_{+}\,\di x\\[1ex]
&\hspace{2ex}+\frac{1}{2kr}\|\dev(T^k_m(0))\|^{2r}_{L^{2r}(\Omega)}\,\di x 
+\int_0^t\|\ve(u^k_{m,_{t}})+\ve(\tilde{u}_t)\|_{L^2(\Omega)}\|T^k_{m,_{t}}\|_{L^2(\Omega)}\,\di \tau\\[1ex]
&\hspace{4ex}+\|\beta'(\theta^k_m+\tilde{\theta})\|_{L^{\infty}(0,T;L^{\infty}(\Omega))}\int_0^t\big\|\big\{|\dev(T^k_m)|-\beta(\theta^k_m+\tilde{\theta})\big\}^{r}_{+}\big\|_{L^2(\Omega)}\|\theta^k_{m,_{t}}+\tilde{\theta}_{t}\|_{L^2(\Omega)}\,\di \tau\,.
     \end{split}
     \label{oszacowanie23}
 \end{equation}
 Theorems \ref{oszacowanie0} and \ref{oszacowanie1} yield that the sequences $\{\ve(u^k _{m,_{t}})\}_{m=1}^\infty$ and $\{\theta^k_{m,_{t}}\}_{m=1}^\infty$ are bounded in $L^2(0,\T;L^2(\Omega;\S))$ and $L^2(0,\T;L^2(\Omega))$, respectively. Assumption (C2) implies that the norm $\|\beta'(\theta^k_m+\tilde{\theta})\|_{L^{\infty}(0,T;L^{\infty}(\Omega))}$ is finite. Additionally, the assumption on the initial data and Cauchy inequality with small weight entail the following inequality
 \begin{equation}
     \begin{split}
         \int_0^t\int_{\Omega}&\D^{-1} T^k_{m,_{t}}\,T^k_{m,_{t}}\,\di x\,\di\tau+ \frac{1}{r+1} \int_{\Omega}\big\{|\dev(T^k_m(t))|-\beta\big(\theta^k_m(t)+\tilde{\theta}(t)\big)\big\}^{r+1}_{+}\,\di x 
\\[1ex]
&+\frac{1}{2kr}\int_{\Omega}|\dev(T^k_m(t))|^{2r}\,\di x \leq C(\nu)+\nu\int_0^t\|T^k_{m,_{t}}\|^2_{L^2(\Omega)}\,\di \tau\\[1ex]
&\hspace{4ex}+D\int_0^t\big\|\big\{|\dev(T^k_m)|-\beta(\theta^k_m+\tilde{\theta})\big\}^{r}_{+}\big\|^2_{L^2(\Omega)}\,\di \tau\,,
     \end{split}
     \label{oszacowanie24}
\end{equation}
where $\nu$ is any positive constant and the constants $C(\nu)$ and $D$ do not depend on $m$. Choosing $\nu$ small enough we obtain
 \begin{equation}
     \begin{split}
         \int_0^t&\int_{\Omega}\D^{-1} T^k_{m,_{t}}\,T^k_{m,_{t}}\,\di x\,\di\tau+ \frac{1}{r+1} \int_{\Omega}\big\{|\dev(T^k_m(t))|-\beta\big(\theta^k_m(t)+\tilde{\theta(t)}\big)\big\}^{r+1}_{+}\,\di x 
\\[1ex]
&+\frac{1}{2kr}\int_{\Omega}|\dev(T^k_m(t))|^{2r}\,\di x \leq C +D\int_0^t\big\|\big\{|\dev(T^k_m)|-\beta(\theta^k_m+\tilde{\theta})\big\}^{r}_{+}\big\|^2_{L^2(\Omega)}\,\di \tau\,.
     \end{split}
     \label{oszacowanie25}
\end{equation}
Observe that for almost every $(x,\tau)\in \Omega\times (0,t)$ such that $$|\dev(T^k_m(x,\tau))|\leq\beta(\theta^k_m(x,\tau)+\tilde{\theta}(x,\tau))\,,$$
the integral on the right hand side of \eqref{oszacowanie25} is equal to 0. Let us introduce the set
$$Q=\{(x,\tau)\in\Omega\times (0,t):\,|\dev(T^k_m(x,\tau))|>\beta(\theta^k_m(x,\tau)+\tilde{\theta}(x,\tau))\}\,,$$
then  
\begin{equation}
     \begin{split}
   \int_0^t\big\|\big\{|\dev(T^k_m)|&-\beta(\theta^k_m+\tilde{\theta})\big\}^{r}_{+}\big\|^2_{L^2(\Omega)}\,\di \tau = \int_{Q} \big(|\dev(T^k_m)|-\beta(\theta^k_m+\tilde{\theta})\big)^{2r}
   \,\di x\,\di\tau\\[1ex]
   &\leq 2^{2r-1}\int_{Q} |\dev(T^k_m)|^{2r}+|\beta(\theta^k_m+\tilde{\theta})|^{2r}
   \,\di x\,\di\tau\\[1ex]
   &\leq 2^{2r-1}\int_{Q} |\dev(T^k_m)|^{2r}+|\dev(T^k_m)|^{2r} \leq 2^{2r}\int_{Q} |\dev(T^k_m)|^{2r}
   \,\di x\,\di\tau\\[1ex]
   &\leq 2^{2r}\int_0^t\int_{\Omega} |\dev(T^k_m)|^{2r}\,\di x\,\di\tau\,.
     \end{split}
     \label{oszacowanie26}
\end{equation} 
Theorem \ref{oszacowanie0} entails that the integral on the right hand side of \eqref{oszacowanie26} is bounded independently of $m$, which completes the proof.
\end{proof}
\begin{concl}
\label{wnioski}
Summarizing the results of Theorems \ref{oszacowanie0}, \ref{oszacowanie1} and \ref{oszacowanie2} we obtain that
\begin{enumerate}[i)]
\item the sequence $\{T^k_m\}_{m=1}^\infty$ is bounded in $H^1(0,\T;L^2(\Omega;\S))$.
\item the sequence $\{u^k_m\}_{m=1}^\infty$ is bounded in $H^1(0,\T;H^1_0(\Omega;\R^3))$.
\item the sequence $\{\theta^k_m\}_{m=1}^\infty$ is bounded in $L^{\infty}(0,\T;H^1(\Omega))\cap H^1(0,\T;L^2(\Omega))$,  hence the compactness Aubin-Lions Lemma (see for example \cite{Roubicekbook}) implies that the sequence $\{\theta^k_m\}_{m=1}^\infty$ is relatively
compact in $L^2(0,\T;L^2(\Omega))$. Therefore it contains a subsequence (again denoted using the superscript $m$) such that $\theta^k_m\rightarrow \theta^k$ a.e. in $\Omega\times(0,\T)$. The continuity of $f$ and $\TC_k$ yield that 
$$f\big(\TC_{k}(\theta^k_m+\tilde{\theta})\big)- f\big(\TC_{k}(\theta^k+\tilde{\theta})\big)\rightarrow 0 \quad \mathrm{a.e.\, in}\quad \Omega\times(0,\T)$$
and $\big|\,f\big(\TC_{k}(\theta^k_m+\tilde{\theta})\big)- f\big(\TC_{k}(\theta^k+\tilde{\theta})\big)\,\big|$ is bounded independently of $m$. From the dominated Lebesgue theorem we conclude that for all $q\geq 1$ 
\begin{equation}
\label{mocnatemp}
f\big(\TC_{k}(\theta^k_m+\tilde{\theta})\big)- f\big(\TC_{k}(\theta^k+\tilde{\theta})\big)\rightarrow 0\quad \mathrm{in}\quad L^q(0,\T;L^q(\Omega;\R))\,.
\end{equation}
\item the sequence $\{\dev(T^k_m)\}_{m=1}^\infty$ is bounded in $L^{\infty}(0,\T;L^{2r}(\Omega;\SS))$.
\item the sequence $\Big\{\big\{|\dev(T^k_m)|-\beta\big(\theta^k_m+\tilde{\theta}\big)\big\}^{r+1}_{+}\Big\}_{m=1}^\infty$ is bounded in $L^{\infty}(0,\T;L^1(\Omega))$.\\[1ex]
\end{enumerate}
\end{concl}
\begin{lem}
\label{ogrniel}
The sequences $\big\{|\dev(T^k_m)|^{2r-1}\,\frac{\dev(T^k_m)}{|\dev(T^k_m)|}\big\}_{m=1}^\infty$ and\\  $\Big\{\big\{|\dev(T^k_m)|-\beta(\theta^k_m+\tilde{\theta})\big\}^{r}_{+}\,\frac{\dev(T^k_m)}{|\dev(T^k_m)|}\Big\}_{m=1}^\infty$  are bounded in $L^{\frac{2r}{2r-1}}(0,\T;L^{\frac{2r}{2r-1}}(\Omega;\SS))$ and $L^{\frac{r+1}{r}}(0,\T;L^{\frac{r+1}{r}}(\Omega;\SS))$, respectively. 
\end{lem}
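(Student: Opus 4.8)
The plan is to reduce both claims to the a priori bounds already collected in Conclusion \ref{wnioski}, since the two tensor fields in question have, pointwise, Frobenius norm equal to a scalar quantity that has already been controlled. Recall that for any $M\in\S$ the matrix $\frac{\dev(M)}{|\dev(M)|}$ has unit Frobenius norm (and, by the usual convention, the whole expression is set to zero on the set $\{\dev(M)=0\}$, where it is multiplied by a factor that vanishes anyway). Hence, pointwise in $(x,t)$,
\begin{equation*}
\Big|\,|\dev(T^k_m)|^{2r-1}\,\tfrac{\dev(T^k_m)}{|\dev(T^k_m)|}\,\Big| = |\dev(T^k_m)|^{2r-1},
\qquad
\Big|\,\big\{|\dev(T^k_m)|-\beta(\theta^k_m+\tilde{\theta})\big\}^{r}_{+}\,\tfrac{\dev(T^k_m)}{|\dev(T^k_m)|}\,\Big| = \big\{|\dev(T^k_m)|-\beta(\theta^k_m+\tilde{\theta})\big\}^{r}_{+}.
\end{equation*}

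For the first sequence I raise the first identity to the power $\frac{2r}{2r-1}$ and integrate over $\Omega\times(0,\T)$, obtaining
\begin{equation*}
\Big\|\,|\dev(T^k_m)|^{2r-1}\,\tfrac{\dev(T^k_m)}{|\dev(T^k_m)|}\,\Big\|^{\frac{2r}{2r-1}}_{L^{\frac{2r}{2r-1}}(0,\T;L^{\frac{2r}{2r-1}}(\Omega;\SS))}
= \int_0^{\T}\!\!\int_{\Omega} |\dev(T^k_m)|^{2r}\,\di x\,\di\tau
\;\leq\; \T \sup_{t\in(0,\T)}\|\dev(T^k_m)(t)\|^{2r}_{L^{2r}(\Omega)} .
\end{equation*}
Since $(0,\T)$ has finite measure, Conclusion \ref{wnioski} iv) (boundedness in $L^{\infty}(0,\T;L^{2r}(\Omega;\SS))$) bounds the right-hand side by a constant independent of $m$, which gives the first assertion.

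For the second sequence I raise the second identity to the power $\frac{r+1}{r}$, so that the exponent $r$ becomes $r+1$, and integrate; using that $\{\cdot\}_+^{r+1}\geq 0$ this yields
\begin{equation*}
\Big\|\,\big\{|\dev(T^k_m)|-\beta(\theta^k_m+\tilde{\theta})\big\}^{r}_{+}\,\tfrac{\dev(T^k_m)}{|\dev(T^k_m)|}\,\Big\|^{\frac{r+1}{r}}_{L^{\frac{r+1}{r}}(0,\T;L^{\frac{r+1}{r}}(\Omega;\SS))}
= \int_0^{\T} \Big\|\big\{|\dev(T^k_m)|-\beta(\theta^k_m+\tilde{\theta})\big\}^{r+1}_{+}\Big\|_{L^1(\Omega)}\,\di\tau,
\end{equation*}
which is at most $\T$ times the $L^{\infty}(0,\T;L^1(\Omega))$-bound furnished by Conclusion \ref{wnioski} v), again uniformly in $m$. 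That establishes the second assertion and completes the proof. I do not anticipate a genuine obstacle here: the statement is essentially a repackaging of items iv) and v) of Conclusion \ref{wnioski}; the only point requiring a word of care is the convention at points where $\dev(T^k_m)$ vanishes, which is harmless because the scalar prefactor is zero there.
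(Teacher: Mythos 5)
Your proof is correct and follows essentially the same route as the paper: both reduce the claims to the pointwise identities $\big|\,|\dev(T^k_m)|^{2r-1}\tfrac{\dev(T^k_m)}{|\dev(T^k_m)|}\big|^{\frac{2r}{2r-1}}=|\dev(T^k_m)|^{2r}$ and $\big|\{|\dev(T^k_m)|-\beta(\theta^k_m+\tilde{\theta})\}^{r}_{+}\tfrac{\dev(T^k_m)}{|\dev(T^k_m)|}\big|^{\frac{r+1}{r}}=\{|\dev(T^k_m)|-\beta(\theta^k_m+\tilde{\theta})\}^{r+1}_{+}$ and then invoke items iv) and v) of Conclusion \ref{wnioski}. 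Your extra remarks on the unit Frobenius norm of $\dev(T^k_m)/|\dev(T^k_m)|$ and the convention where the deviator vanishes are just a more explicit rendering of the same argument.
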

\begin{proof}
Observe that for $t\in (0,\T)$ we have
\begin{equation*}
\int_0^t\int_{\Omega}\Big||\dev(T^k_m)|^{2r-1}\,\frac{\dev(T^k_m)}{|\dev(T^k_m)|}\Big|^{\frac{2r}{2r-1}}\,\di x\, \di \tau= \int_0^t\int_{\Omega}|\dev(T^k_m)|^{2r}\,\di x\, \di \tau
\end{equation*}
and from Conclusion \ref{wnioski} we receive the first statement of this lemma. Next, let us notice that 
\begin{equation*}
\int_0^t\int_{\Omega}\Big|\big\{|\dev(T^k_m)|-\beta(\theta^k_m+\tilde{\theta})\big\}^{r}_{+}\,\frac{\dev(T^k_m)}{|\dev(T^k_m)|}\Big|^{\frac{r+1}{r}}\,\di x\, \di \tau= \int_0^t\int_{\Omega}\big\{|\dev(T^k_m)|-\beta(\theta^k_m+\tilde{\theta})\big\}^{r+1}_{+}\,\di x\, \di \tau\,.
\end{equation*}
Then, using Conclusion \ref{wnioski} the proof is complete.
\end{proof}
The above uniform estimates allows us to conclude that, at least for a subsequence, the following holds
\begin{eqnarray}
\label{weaklimm}
T^k_m \rightharpoonup T^k &\hspace{2ex} \mbox{weakly in }    H^1(0,\T;L^2(\Omega;\S)),\nn\\[1ex]
\dev(T^k_m) \rightharpoonup \dev(T^k) & \hspace{2ex} \mbox{weakly in }   L^{2r}(0,\T;L^{2r}(\Omega;\SS)),\nn\\[1ex]
u^k_m\rightharpoonup u^k & \hspace{2ex} \mbox{weakly in }   H^1(0,\T;H^1_0(\Omega;\R^3)),\nn\\[1ex]
\theta^k_m \rightarrow \theta^k  & \hspace{2ex} \mbox{in }    L^2(0,\T;L^2(\Omega)),\\[1ex]
\frac{1}{k}|\dev(T^k_m)|^{2r-1}\,\frac{\dev(T^k_m)}{|\dev(T^k_m)|} \rightharpoonup \chi^k &\hspace{2ex} \mbox{weakly in }  L^{\frac{2r}{2r-1}}(0,\T;L^{\frac{2r}{2r-1}}(\Omega;\SS)),\nn\\[2ex]
\big\{|\dev(T^k_m)|-\beta(\theta^k_m+\tilde{\theta})\big\}^{r}_{+}\,\frac{\dev(T^k_m)}{|\dev(T^k_m)|}\rightharpoonup \psi^k  & \hspace{2ex} \mbox{weakly in }  L^{\frac{r+1}{r}}(0,\T;L^{\frac{r+1}{r}}(\Omega;\SS))\nn
\end{eqnarray}
with $m \rightarrow \infty$. Connecting the last two convergences in \eqref{weaklimm} we may write 
\begin{equation*}
  \frac{1}{k}|\dev(T^k_m)|^{2r-1}\,\frac{\dev(T^k_m)}{|\dev(T^k_m)|}+\big\{|\dev(T^k_m)|-\beta(\theta^k_m+\tilde{\theta})\big\}^{r}_{+}\,\frac{\dev(T^k_m)}{|\dev(T^k_m)|}\rightharpoonup \chi^k+\psi^k=\omega^k
\end{equation*}
in $L^{\frac{2r}{2r-1}}(0,\T;L^{\frac{2r}{2r-1}}(\Omega;\SS))$. 
From \eqref{weaklimm} and \eqref{mocnatemp} we can pass to the limits in  equations $\eqref{app_system}_1$ and $\eqref{app_system}_2$ with $m \rightarrow \infty$. The standard tools in the Galerkin method allow us to write 
\begin{equation}
\begin{aligned}
\int_{\Omega}  \big(T^k - f\big(\TC_k(\theta^k + \tilde{\theta})\big)\id\big) \varepsilon(w)\, \di x &+ \int_{\Omega}\D(\varepsilon(u^k _{t})) \,\varepsilon(w)\,\di x = 0\,,
\\[1ex]
\int_{\Omega}\D^{-1} T^k_{t}\tau\,\di x + \int_{\Omega}\omega^k\,\tau\,\di x
&=\int_{\Omega}\big(\ve(u^k_{t})+\ve(\tilde{u}_t)\big)\,\tau\,\di x
\end{aligned}
\label{weaklimit12}
\end{equation}
for almost all $t\in (0,\T)$, where the first equation in \eqref{weaklimit12} is satisfied for all $w\in H_0^1(\Omega;\R^3)$ and the second one for all $(\tau,\dev{\tau})\in L^2(\Omega;\S)\times L^{2r}(\Omega;\SS)$. To complete the existence of solutions to truncated problem \eqref{AMain1}, we need to characterize weak limit of $\omega^k$. We are going to use the Young Measure approach. In order to be able to use Young's measure, the main assumption that the nonlinearity must satisfy, is the following inequality (see for example \cite{chelgwiaz2007,tve-Orlicz,GWIAZDA2005923}).
\begin{tw}
\label{lmimsup}
The following inequality holds for solutions of approximate system
\begin{equation}
\label{limsupmain}
\begin{split}
\limsup_{m\rightarrow\infty}\Big[& \int_{0}^{t}\int_{\Omega}\big\{|\dev(T^k_m)|-\beta(\theta^k_m+\tilde{\theta})\big\}^{r}_{+}\,|\dev(T^k_m)| \,\di x\, \di \tau\\[1ex]
&+ \frac{1}{k}\int_{0}^{t}\int_{\Omega}|\dev(T^k_m)|^{2r} \,\di x\, \di \tau\Big]\leq
\int_{0}^{t}\int_{\Omega}\omega^k\, \dev(T^k) \di x\,\di \tau
\end{split}
\end{equation}
for all $t\in (0,\T]$.
\end{tw}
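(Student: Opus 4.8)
The plan is to obtain \eqref{limsupmain} by testing the approximate stress equation against the stress itself, passing to the limit with the one-sided help of weak lower semicontinuity of the elastic energy, and then re-identifying the resulting expression through the limit equations \eqref{weaklimit12}. First I would multiply $\eqref{app_system}_2$ by $\gamma_{k,m}^n(t)$, sum over $n=1,\dots,m$ and use that the deviatoric part of a symmetric tensor is orthogonal to its volumetric part (exactly as in the second line of \eqref{oszacowanie11}); integrating over $(0,t)$ then yields the identity
\begin{equation*}
\begin{split}
&\int_0^t\!\!\int_{\Omega}\big\{|\dev(T^k_m)|-\beta(\theta^k_m+\tilde{\theta})\big\}^{r}_{+}|\dev(T^k_m)|\,\di x\,\di\tau+\frac{1}{k}\int_0^t\!\!\int_{\Omega}|\dev(T^k_m)|^{2r}\,\di x\,\di\tau\\[1ex]
&\quad=\int_0^t\!\!\int_{\Omega}\big(\ve(u^k_{m,_{t}})+\ve(\tilde{u}_t)\big)\,T^k_m\,\di x\,\di\tau-\frac{1}{2}\int_{\Omega}\D^{-1}T^k_m(t)\,T^k_m(t)\,\di x+\frac{1}{2}\int_{\Omega}\D^{-1}T^k_m(0)\,T^k_m(0)\,\di x\,.
\end{split}
\end{equation*}
It therefore suffices to estimate $\limsup_{m\to\infty}$ of the right-hand side by $\int_0^t\int_{\Omega}\omega^k\,\dev(T^k)\,\di x\,\di\tau$; the products are integrable because, by Conclusion \ref{wnioski} and Lemma \ref{ogrniel}, $\omega^k\in L^{\frac{2r}{2r-1}}(0,\T;L^{\frac{2r}{2r-1}}(\Omega;\SS))$ and $\dev(T^k)\in L^{2r}(0,\T;L^{2r}(\Omega;\SS))$, and $\omega^k$ is deviatoric.

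The genuine obstacle is the single term $\int_0^t\int_{\Omega}\ve(u^k_{m,_{t}})\,T^k_m$, a product of two sequences converging only weakly. I would rewrite it using the approximate momentum equation: testing $\eqref{app_system}_1$ by $(\alpha_{k,m}^n(t))_{,_{t}}$ and summing gives, as in the first line of \eqref{oszacowanie11},
\begin{equation*}
\int_{\Omega}T^k_m\,\ve(u^k_{m,_{t}})\,\di x=\int_{\Omega}f\big(\TC_k(\tilde{\theta}+\theta^k_m)\big)\,\mathrm{div}(u^k_{m,_{t}})\,\di x-\int_{\Omega}\D\ve(u^k_{m,_{t}})\,\ve(u^k_{m,_{t}})\,\di x\,.
\end{equation*}
By \eqref{mocnatemp} the factor $f(\TC_k(\tilde{\theta}+\theta^k_m))$ converges strongly in every $L^q(0,\T;L^q(\Omega))$ while $\mathrm{div}(u^k_{m,_{t}})\rightharpoonup\mathrm{div}(u^k_t)$ weakly in $L^2$, so the first term on the right passes to the limit; by weak lower semicontinuity of the nonnegative quadratic form $v\mapsto\int_0^t\int_{\Omega}\D\ve(v)\,\ve(v)$ on $L^2(0,t;H^1_0(\Omega;\R^3))$, the $\limsup$ of the second (negated) term is $\le-\int_0^t\int_{\Omega}\D\ve(u^k_t)\,\ve(u^k_t)$. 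Hence $\limsup_m\int_0^t\int_{\Omega}\ve(u^k_{m,_{t}})\,T^k_m$ is bounded above by $\int_0^t\int_{\Omega}\big(f(\TC_k(\tilde{\theta}+\theta^k))\,\mathrm{div}(u^k_t)-\D\ve(u^k_t)\,\ve(u^k_t)\big)$, which by the limit momentum equation $\eqref{weaklimit12}_1$ tested with $w=u^k_t(\cdot,\tau)$ and integrated over $(0,t)$ equals $\int_0^t\int_{\Omega}T^k\,\ve(u^k_t)$.

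For the remaining terms: $\ve(\tilde{u}_t)$ is a fixed element of $L^2(0,\T;L^2(\Omega))$ and $T^k_m\rightharpoonup T^k$ weakly there, so $\int_0^t\int_{\Omega}\ve(\tilde{u}_t)\,T^k_m\to\int_0^t\int_{\Omega}\ve(\tilde{u}_t)\,T^k$; the Galerkin projections $T^k_m(0)$ converge strongly in $L^2(\Omega;\S)$ to $T_0=T^k(0)$, so the last term tends to $\tfrac12\int_{\Omega}\D^{-1}T_0\,T_0$; and writing $T^k_m(t)=T^k_m(0)+\int_0^tT^k_{m,_{t}}\,\di\tau$ shows $T^k_m(t)\rightharpoonup T^k(t)$ weakly in $L^2(\Omega;\S)$ for every fixed $t$, whence $\limsup_m\big(-\tfrac12\int_{\Omega}\D^{-1}T^k_m(t)\,T^k_m(t)\big)\le-\tfrac12\int_{\Omega}\D^{-1}T^k(t)\,T^k(t)$ again by weak lower semicontinuity. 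Summing these bounds (using the subadditivity of $\limsup$) bounds $\limsup_{m\to\infty}$ of the left-hand side of \eqref{limsupmain} by
\begin{equation*}
\int_0^t\!\!\int_{\Omega}\big(\ve(u^k_t)+\ve(\tilde{u}_t)\big)\,T^k\,\di x\,\di\tau-\frac{1}{2}\int_{\Omega}\D^{-1}T^k(t)\,T^k(t)\,\di x+\frac{1}{2}\int_{\Omega}\D^{-1}T_0\,T_0\,\di x\,.
\end{equation*}
Finally, testing the limit stress equation $\eqref{weaklimit12}_2$ with $\tau=T^k(\cdot,\tau)$ (using that $\omega^k\,T^k=\omega^k\,\dev(T^k)$ since $\omega^k$ is deviatoric) and integrating over $(0,t)$ shows that this last quantity is exactly $\int_0^t\int_{\Omega}\omega^k\,\dev(T^k)\,\di x\,\di\tau$, which proves \eqref{limsupmain}. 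I expect the delicate points to be precisely the two one-sided lower-semicontinuity estimates — for the elastic energy and for $\tfrac12\int_{\Omega}\D^{-1}T^k_m(t)\,T^k_m(t)$ — combined with the strong $L^q$ convergence \eqref{mocnatemp} of the truncated thermal nonlinearity, since it is exactly this combination that converts an otherwise uncontrollable product of weak limits into the required inequality.
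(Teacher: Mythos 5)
Your proposal is correct and follows essentially the same route as the paper: compare the $m$-level energy identity coming from $\eqref{app_system}_1$ and $\eqref{app_system}_2$ with the limit energy identity obtained by testing \eqref{weaklimit12} with $u^k_t$ and $T^k$, then use weak lower semicontinuity of the two quadratic forms, the strong convergence \eqref{mocnatemp} paired with the weak convergence of $\mathrm{div}(u^k_{m,t})$, and the weak convergence of $T^k_m$ against the fixed $\ve(\tilde{u}_t)$. The only difference is bookkeeping — you substitute the momentum equation into the stress identity and keep the terms $\tfrac12\int_{\Omega}\D^{-1}T^k(t)\,T^k(t)\,\mathrm{d}x$ and $\tfrac12\int_{\Omega}\D^{-1}T_0\,T_0\,\mathrm{d}x$ explicit until they cancel, while the paper first adds the two equations and subtracts the integrated limit identity — so the ingredients and inequalities are identical.
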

\begin{proof}
From the formulas $\eqref{oszacowanie11}_1$ and $\eqref{oszacowanie11}_2$ we have 
\begin{equation}
\begin{aligned}
&\frac{1}{2}\frac{\di}{\di t}\int_{\Omega}\D^{-1} T^k_{m}\,T^k_m\,\di x+
 \int_{\Omega}\D(\varepsilon(u^k _{m,_{t}})) \,\ve(u^k _{m,_{t}})\,\di x\\[1ex]
 &\hspace{2ex}+ \int_{\Omega}\big\{|\dev(T^k_m)|-\beta(\theta^k_m+\tilde{\theta})\big\}^{r}_{+}\,|\dev(T^k_m)|\,\di x \\[1ex]
 &\hspace{4ex}+\frac{1}{k}\int_{\Omega}|\dev(T^k_m)|^{2r}\,\di x 
 = \int_{\Omega}  f\big(\TC_k(\tilde{\theta}+\theta_{m}^k )\big) \mathrm{div}(u^k _{m,_{t}})\, \di x +\int_{\Omega}\ve(\tilde{u}_t)\,T^k_m\,\di x\,.
\end{aligned}
\label{limsup1}
\end{equation}
Testing $\eqref{weaklimit12}_1$ by $w=u^k_t$ and $\eqref{weaklimit12}_2$ by $\tau=T^k$ we have
\begin{equation}
\begin{aligned}
\frac{1}{2}\frac{\di}{\di t}\int_{\Omega}&\D^{-1} T^k\,T^k\,\di x + \int_{\Omega}\D(\varepsilon(u^k _{t})) \,\varepsilon(u^k_t)\,\di x+\int_{\Omega}\omega^k\,\dev(T^k) \,\di x\\[1ex] 
&= \int_{\Omega} f\big(\TC_k(\theta^k + \tilde{\theta})\big) \mathrm{div}(u^k_t)\, \di x + \int_{\Omega}\ve(\tilde{u}_t)T^k\,\di x\,.
\end{aligned}
\label{limsup2}
\end{equation}
After integration \eqref{limsup1} and \eqref{limsup2} over time interval $(0,t)$ and subtraction these two formulas to each other we get
\begin{equation}
\begin{aligned}
&\int_{\Omega}\D^{-1} T^k_{m}(t)\,T^k_m(t)\,\di x+
 \int_0^t\int_{\Omega}\D(\varepsilon(u^k _{m,_{t}})) \,\ve(u^k _{m,_{t}})\,\di x\,\di \tau\\[1ex]
 &\hspace{2ex} +\int_0^t\int_{\Omega}\big\{|\dev(T^k_m)|-\beta(\theta^k_m+\tilde{\theta})\big\}^{r}_{+}\,|\dev(T^k_m)|\,\di x\,\di \tau +\frac{1}{k}\int_0^t\int_{\Omega}|\dev(T^k_m)|^{2r}\,\di x\,\di\tau\\[1ex]
 &=\int_{\Omega}\D^{-1} T^k_m(0)\,T^k_m(0)\,\di x + \int_0^t\int_{\Omega}\D(\varepsilon(u^k _{t})) \,\varepsilon(u^k_t)\,\di x\,\di\tau + \int_0^t\int_{\Omega}\omega^k\, \dev(T^k) \,\di x\,\di\tau\\[1ex]
 &\hspace{2ex}+ \int_0^t\int_{\Omega}  f\big(\TC_k(\tilde{\theta}+\theta_{m}^k )\big) \mathrm{div}(u^k _{m,_{t}})\, \di x\,\di\tau +\int_0^t\int_{\Omega}\ve(\tilde{u}_t)\,T^k_m\,\di x\,\di\tau\\[1ex]
&\hspace{4ex}- \int_0^t\int_{\Omega} f\big(\TC_k(\theta^k + \tilde{\theta})\big) \mathrm{div}(u^k_t)\, \di x\,\di\tau - \int_0^t\int_{\Omega}\ve(\tilde{u}_t)\,T^k\,\di x\,\di\tau\,.
\end{aligned}
\label{limsup3}
\end{equation}
Observe that for almost all $(x,t)\in \Omega\times (0,T)$ operators $\D^{-1}(\cdot)(\cdot)$ and  $\D(\cdot)(\cdot)$ are convex, hence the lower semi-continuity in $ L^2(0,\T;L^2(\Omega;\S))$ yields 
\begin{equation}
\label{lowersemi1}
\liminf_{m\rightarrow\infty}\Big(\int_{\Omega}\D^{-1} T^k_{m}(t)\,T^k_m(t)\,\di x\Big)\geq \int_{\Omega}\D^{-1} T^k(t)\,T^k(t)\,\di x 
\end{equation}
for almost all $t\in(0,\T)$ and
\begin{equation}
\label{lowersemi2}
\liminf_{m\rightarrow\infty}\Big( \int_0^t\int_{\Omega}\D(\varepsilon(u^k _{m,_{t}})) \,\ve(u^k _{m,_{t}})\,\di x\,\di \tau\Big)\geq \int_0^t\int_{\Omega}\D(\varepsilon(u^k _{t})) \,\varepsilon(u^k_t)\,\di x\,\di\tau\,.
\end{equation}
Additionally, \eqref{weaklimm} and \eqref{wnioski} $(iii)$ imply
\begin{equation}
\label{limsup4}
 \int_0^t\int_{\Omega}  f\big(\TC_k(\tilde{\theta}+\theta_{m}^k )\big) \mathrm{div}(u^k _{m,_{t}})\, \di x\,\di\tau\rightarrow \int_0^t\int_{\Omega} f\big(\TC_k(\theta^k + \tilde{\theta})\big) \mathrm{div}(u^k_t)\, \di x\,\di\tau
\end{equation}
and 
\begin{equation}
\label{limsup5}
\int_0^t\int_{\Omega}\ve(\tilde{u}_t)\,T^k_m\,\di x\,\di\tau \rightarrow \int_0^t\int_{\Omega}\ve(\tilde{u}_t)\,T^k\,\di x\,\di\tau
\end{equation}
as $m\rightarrow\infty$. Taking the limit superior of \eqref{limsup3}, we complete the proof.
\end{proof}
\subsection{Young measures tools}
In this section, for the convenience of the reader, we present all the tools concerning Young measures which we need to characterize the weak limit $\omega^k$. For more details and  proofs, we refer to \cite[Corollaries 3.2-3.4]{MUller1999} and \cite[Theorem 2.9]{Alibert1997}, see also \cite{maleknecas}.
Let us consider a measurable set $E\subset\R^n$ and by $C_0(\R^m)$ we will denote the closure of continuous functions on $\R^m$ with a compact support. $C_0(\R^m)$ equipped with a norm $\|f\|_{\infty}=\sup_{\lambda\in\R^m}|f(\lambda)|$ is a Banach space. From the Riesz representation theorem the dual of $C_0(\R^m)$ can be identified with the space $\mathcal{M}(\R^m)$ of bounded Radon measures on $\R^m$. The duality pair between $C_0(\R^m)$ and $\mathcal{M}(\R^m)$ 
is defined by 
\begin{equation}
\label{paradualna}
\langle\nu,f\rangle=\int_{\R^m}f(\lambda)\,\di\nu(\lambda)\,.
\end{equation}
We will start with the Fundamental theorem on Young measures, see Theorem 3.1 of \cite{MUller1999}.
\begin{tw}
\label{Ball}
Let $E\subset\R^n$ be a measurable set of finite measure and let $z_j:E\rightarrow\R^m$ be a sequence of measurable functions.
Then there exist a subsequence (still denote by $z_j$) and weak$^{\ast}$  measurable map $\nu_x:E\rightarrow \mathcal{M}(\R^m)$ such that the following holds
\begin{enumerate}[(i)]
    \item \,$\nu_x\geq0$,\, $\|\nu_x\|_{\mathcal{M}(\R^m)}=\int_{\R^m}\,\di\nu_x\leq 1$ for a.e. $x\in E$.
    \item For all $f\in C_0(\R^m)$
    \begin{equation*}
        f(z_j) \overset{\ast}{\rightharpoonup} \bar{f}\quad \textrm{in}\quad L^{\infty}(E)\,,
    \end{equation*}
    where 
    \begin{equation*}
        \bar{f}(x)=\langle \nu_x,f\rangle=\int_{\R^m} f(\lambda)\,\di \nu_x(\lambda)\,.
    \end{equation*}
    \item Let $K\subset\R^m$ be compact. If $\mathrm{dist}\,(z_j,K)\rightarrow 0$ in measure, then $\mathrm{supp}\,\nu_x\subset K$.
    \item Furthermore $\|\nu_x\|_{\mathcal{M}(\R^m)}=1$ for a.a. $x\in E$ if and only if the sequence does not go to infinity, i.e. if 
    \begin{equation*}
        \lim_{M\rightarrow \infty}\,\sup\big|\{|z_j|\geq M\}\big|=0\,.
    \end{equation*}
    \item Assume that $\|\nu_x\|_{\mathcal{M}(\R^m)}=1$ for a.a. $x\in E$, $A\subset E$ is measurable, $f\in C(\R^m)$ and the set $\{f(z_j)\}$ is relatively weakly compact in $L^1(A)$. Then 
    \begin{equation*}
      f(z_j) \rightharpoonup \bar{f}\quad \textrm{in}\quad L^{1}(A)\,,\quad \bar{f}(x)=\langle \nu_x,f\rangle=\int_{\R^m} f(\lambda)\,\di \nu_x(\lambda)\,.   
    \end{equation*}
\end{enumerate}
\end{tw}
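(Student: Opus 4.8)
The plan is to represent each function $z_j$ by the Dirac-mass-valued map $x\mapsto\delta_{z_j(x)}$ and to extract a weak-$\ast$ limit of these maps in a suitable dual space. Since $E$ has finite measure and $C_0(\R^m)$ is separable, the Bochner space $L^1(E;C_0(\R^m))$ is separable, and a standard identification shows that its dual is the space $L^\infty_w(E;\mathcal{M}(\R^m))$ of weak-$\ast$ measurable, essentially bounded families $x\mapsto\mu_x$ of Radon measures, paired with $\Phi\in L^1(E;C_0(\R^m))$ by $\langle\mu_\cdot,\Phi\rangle=\int_E\int_{\R^m}\Phi(x,\lambda)\,\di\mu_x(\lambda)\,\di x$. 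The families $\mu^j_x:=\delta_{z_j(x)}$ all have norm $1$ in this dual; hence, by Banach--Alaoglu together with the separability of the predual, some subsequence (not relabelled) converges weak-$\ast$ to a family $x\mapsto\nu_x$. This subsequence and this $\nu_x$ are the objects claimed in the theorem, and the remaining work is to verify (i)--(v) by testing against suitable $\Phi$.

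First I would read off (i): testing with $\Phi(x,\lambda)=g(x)h(\lambda)$, $0\le g\in L^1(E)$, $0\le h\in C_0(\R^m)$, gives $\int_E g(x)\langle\nu_x,h\rangle\,\di x=\lim_j\int_E g(x)h(z_j(x))\,\di x\ge0$, so $\nu_x\ge0$ a.e., and lower semicontinuity of the norm under weak-$\ast$ convergence gives $\|\nu_x\|_{\mathcal{M}(\R^m)}\le1$ a.e. For (ii), fix $f\in C_0(\R^m)$; since $\{f(z_j)\}$ is bounded in $L^\infty(E)$, its only possible weak-$\ast$ limit point is the function $\bar f$ determined by $\int_E g(x)\bar f(x)\,\di x=\lim_j\int_E g(x)f(z_j(x))\,\di x=\int_E g(x)\langle\nu_x,f\rangle\,\di x$ for all $g\in L^1(E)$, i.e.\ $\bar f(x)=\langle\nu_x,f\rangle$, and uniqueness of the limit forces the whole subsequence to converge. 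For (iii), if $K$ is compact and $\mathrm{dist}(z_j,K)\to0$ in measure, then every $f\in C_0(\R^m)$ supported away from $K$ satisfies $f(z_j)\to0$ in measure and boundedly, hence $\langle\nu_x,f\rangle=\bar f\equiv0$; since such $f$ separate the points of $\R^m\setminus K$, this forces $\mathrm{supp}\,\nu_x\subset K$ a.e.

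For (iv) I would introduce cut-offs $\chi_M\in C_0(\R^m)$ with $0\le\chi_M\le1$ and $\chi_M\equiv1$ on the ball of radius $M$. Then $\int_E\langle\nu_x,\chi_M\rangle\,\di x=\lim_j\int_E\chi_M(z_j(x))\,\di x\ge\liminf_j\big(|E|-|\{|z_j|\ge M\}|\big)$, and since $\langle\nu_x,\chi_M\rangle\le\|\nu_x\|\le1$, letting $M\to\infty$ and using monotone convergence shows that the sequence does not escape to infinity if and only if $\int_E\|\nu_x\|\,\di x=|E|$, i.e.\ $\|\nu_x\|=1$ a.e. Finally, for (v), suppose $\|\nu_x\|=1$ a.e., $f\in C(\R^m)$, and $\{f(z_j)\}$ is relatively weakly compact in $L^1(A)$; by the Dunford--Pettis theorem this set is equiintegrable. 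Splitting $f=f\chi_M+f(1-\chi_M)$, part (ii) applied to the bounded function $f\chi_M$ gives $f(z_j)\chi_M(z_j)\rightharpoonup\langle\nu_x,f\chi_M\rangle$ in $L^1(A)$; the tail $f(z_j)(1-\chi_M(z_j))$ is supported in $\{|z_j|\ge M\}$, whose measure is small uniformly in $j$ for large $M$ by (iv), so equiintegrability makes this tail small in $L^1(A)$ uniformly in $j$; and $\langle\nu_x,f\chi_M\rangle\to\langle\nu_x,f\rangle$ a.e.\ and in $L^1(A)$ by dominated convergence, using $\|\nu_x\|=1$ to control the measure-side tail. Passing to the limit $M\to\infty$ yields $f(z_j)\rightharpoonup\langle\nu_x,f\rangle$ in $L^1(A)$.

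I expect the main obstacle to be part (v): because $f$ need not be bounded it cannot be paired directly against $\nu_x$, and the truncation argument has to reconcile two a priori unrelated tails --- the equiintegrability-controlled tail of $f(z_j)$ and the dominated-convergence tail of $\langle\nu_x,f\chi_M\rangle$ --- which is precisely where the normalisation $\|\nu_x\|=1$ from (iv) is essential. A secondary technical point is the functional-analytic set-up of the first paragraph: identifying $L^1(E;C_0(\R^m))^\ast$ with $L^\infty_w(E;\mathcal{M}(\R^m))$ and verifying separability of the predual, so that Banach--Alaoglu indeed delivers sequential weak-$\ast$ compactness.
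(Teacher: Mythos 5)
This theorem is not proved in the paper at all: it is the fundamental theorem on Young measures, quoted from the cited literature (Theorem 3.1 of M\"uller's lecture notes; see also Alibert--Bouchitt\'e), and your argument --- viewing $x\mapsto\delta_{z_j(x)}$ as a bounded sequence in $L^\infty_w(E;\mathcal{M}(\R^m))=\big(L^1(E;C_0(\R^m))\big)^{\ast}$, extracting a weak-$\ast$ limit via Banach--Alaoglu with separable predual, and verifying (i)--(v) by testing and truncation --- is exactly the standard proof given there, and it is correct in substance. Two sketch-level points to tighten: in (iv) you only display the inequality proving the ``if'' direction, while the converse also needs the complementary bound $\int_E\langle\nu_x,\chi_M\rangle\,\di x\le |E|-\limsup_{j}\big|\{|z_j|\ge M'\}\big|$ (with $M'$ the radius outside which $\chi_M$ vanishes) plus the routine upgrade from $\limsup_j$ to $\sup_j$ using that each fixed $z_j$ is finite a.e.; and in (v) the passage $\langle\nu_x,f\chi_M\rangle\to\langle\nu_x,f\rangle$ in $L^1(A)$ is justified not by $\|\nu_x\|=1$ but by the uniform bound $\sup_j\|f(z_j)\|_{L^1(A)}<\infty$ together with weak lower semicontinuity and monotone convergence, which is what shows $\langle\nu_x,|f|\rangle\in L^1(A)$ and supplies the dominating function.
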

Theorem \ref{Ball} refers to the existence of Young measures. Let us now formulate a few properties related to these measures.
\begin{lem}
\label{deltadiraca} Let us assume that a sequence $z_j$ of measurable functions from $E$ to $\R^m$ generates the Young measure $\nu:E\rightarrow \mathcal{M}(\R^m)$. Then $z_j\rightarrow z$ in measure if and only if $\nu_x=\delta_{z(x)}$ a.e.
\end{lem}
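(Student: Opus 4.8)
\emph{Setup.} The plan is to prove the two implications separately, using the Fundamental Theorem on Young measures (Theorem \ref{Ball}) and the fact that on the finite measure space $E$ convergence in measure is metrizable, e.g.\ by $d(u,v)=\int_E\tfrac{|u-v|}{1+|u-v|}\,\di x$.

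\emph{From convergence in measure to $\nu_x=\delta_{z(x)}$.} Here I would fix $f\in C_0(\R^m)$: since $f$ is bounded and uniformly continuous, $z_j\to z$ in measure forces $f(z_j)\to f(z)$ in measure, and because $|E|<\infty$ the dominated convergence theorem (with dominating function $\|f\|_\infty\mathbf 1_E$) upgrades this to $f(z_j)\to f(z)$ in $L^1(E)$, hence $f(z_j)\overset{\ast}{\rightharpoonup}f(z)$ in $L^\infty(E)$. Comparing with Theorem \ref{Ball}(ii), which gives $f(z_j)\overset{\ast}{\rightharpoonup}\langle\nu_\cdot,f\rangle$, uniqueness of the weak-$\ast$ limit yields $\langle\nu_x,f\rangle=f(z(x))$ for a.e.\ $x$. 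Running this through a countable dense subset of $C_0(\R^m)$ and using density, there is a single null set off which $\langle\nu_x,f\rangle=f(z(x))$ for \emph{every} $f\in C_0(\R^m)$. For such an $x$, choosing $g_n\in C_0(\R^m)$ with $0\le g_n\le1$, $g_n\equiv0$ on a $1/n$-ball around $z(x)$ and $g_n\to\mathbf 1_{\R^m\setminus\{z(x)\}}$ pointwise, the identity $\int g_n\,\di\nu_x=g_n(z(x))=0$ together with dominated convergence (legitimate since $\|\nu_x\|_{\mathcal{M}(\R^m)}\le1$) gives $\nu_x(\R^m\setminus\{z(x)\})=0$; testing against an $f$ with $f(z(x))\neq0$ then pins the total mass at $1$, so $\nu_x=\delta_{z(x)}$.

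\emph{From $\nu_x=\delta_{z(x)}$ to convergence in measure.} For this direction I would fix $\varepsilon>0$ and set $\psi_\varepsilon(s):=\min(s/\varepsilon,1)$, a bounded continuous function vanishing at $0$ and equal to $1$ for $s\ge\varepsilon$, and then invoke the Carath\'eodory-integrand version of the fundamental theorem (the refinement of Theorem \ref{Ball}(v) recorded in \cite{MUller1999}). It applies here because $\|\nu_x\|=1$ a.e.\ and the integrand $\Phi(x,\lambda):=\psi_\varepsilon(|\lambda-z(x)|)$ is Carath\'eodory and bounded by $1$, so $\{\Phi(\cdot,z_j(\cdot))\}_j$ is equi-integrable on $E$; it yields $\Phi(\cdot,z_j(\cdot))\rightharpoonup\bar\Phi$ in $L^1(E)$ with $\bar\Phi(x)=\int_{\R^m}\Phi(x,\lambda)\,\di\nu_x(\lambda)=\psi_\varepsilon(0)=0$. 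Testing this against the constant $1$ gives $\int_E\psi_\varepsilon(|z_j-z|)\,\di x\to0$, and since $\psi_\varepsilon(|z_j-z|)\ge\mathbf 1_{\{|z_j-z|\ge\varepsilon\}}$ we get $|\{x\in E:|z_j(x)-z(x)|\ge\varepsilon\}|\to0$, i.e.\ $z_j\to z$ in measure.

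\emph{Main obstacle.} The first implication is routine; the second is the delicate one, precisely because Theorem \ref{Ball}(ii),(v) only control $f(z_j)$ for $f$ \emph{independent} of $x$, whereas the natural test object $\psi_\varepsilon(|\lambda-z(x)|)$ genuinely depends on $x$. If one wishes to avoid the Carath\'eodory-integrand refinement, I would circumvent it by a Lusin argument: pick a closed $F\subset E$ with $|E\setminus F|$ small on which $z$ agrees with a continuous $\zeta$, use Theorem \ref{Ball}(iv) (applicable since $\|\nu_x\|=1$) to choose $R$ with $\sup_j|\{|z_j|\ge R\}|$ small, approximate $(x,\lambda)\mapsto\tfrac{|\lambda-\zeta(x)|}{1+|\lambda-\zeta(x)|}$ uniformly on $\overline F\times\{|\lambda|\le R\}$ by finite sums $\sum_i\phi_i(x)f_i(\lambda)$ with $\phi_i\in C(\overline F)$ and $f_i\in C_0(\R^m)$ (Stone--Weierstrass), and pass to the limit term by term using $f_i(z_j)\overset{\ast}{\rightharpoonup}f_i(z)$ tested against $\phi_i\in L^1$; the metric $d$ then goes to $0$ on $F$ and is bounded by $|E\setminus F|$ off $F$, which closes the argument.
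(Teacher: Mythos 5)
Your proposal is correct, but there is nothing in the paper to compare it against: Lemma \ref{deltadiraca} is stated in the ``Young measures tools'' section without proof, as a quoted auxiliary result with the reader referred to \cite[Corollaries 3.2--3.4]{MUller1999} and \cite[Theorem 2.9]{Alibert1997}. Your argument is essentially the standard proof from those references. The first implication (testing Theorem \ref{Ball}(ii) against a countable dense family of $f\in C_0(\R^m)$, identifying $\langle\nu_x,f\rangle=f(z(x))$ off a single null set, then squeezing the mass onto $\{z(x)\}$ with cut-offs $g_n$ vanishing near $z(x)$ and pinning the total mass at $1$) is complete and careful about the null-set bookkeeping. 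For the converse, note that you do not actually need to reach outside the paper for the ``Carath\'eodory-integrand refinement'': the paper's own Lemma \ref{slpolciaYoung} (second part) is exactly that statement, and your integrand $\Phi(x,\lambda)=\psi_\varepsilon(|\lambda-z(x)|)$ is Carath\'eodory and bounded by $1$ on the finite-measure set $E$, hence $\{\Phi(\cdot,z_j(\cdot))\}_j$ is uniformly integrable and the lemma applies (your explicit check that $\|\nu_x\|_{\mathcal{M}(\R^m)}=1$ a.e., automatic here since $\nu_x=\delta_{z(x)}$, is the right caution, as that hypothesis is needed for the full weak $L^1$ identification). The alternative Lusin/Stone--Weierstrass route you sketch to avoid $x$-dependent integrands is also sound, modulo the routine adjustments you would have to make (take $F$ compact rather than merely closed, and absorb the region $\{|z_j|\ge R\}$ using Theorem \ref{Ball}(iv)); it buys independence from the Carath\'eodory version of the fundamental theorem at the cost of extra bookkeeping, whereas the first route is shorter and matches how the paper actually uses these tools (compare the passage from \eqref{operator1} to \eqref{measure}).
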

\begin{lem}
\label{slpolciaYoung}
Suppose that the sequence of maps $z_j:E\rightarrow \R^m$ generates Young measure $\nu_x:E\rightarrow \mathcal{M}(\R^m)$. Let $f:E\times \R^m\rightarrow\R^m$ be a Carath\'eodory function (i.e. measurable in the first argument and continuous in the second one). Let us also assume that the negative part $\{f(x,z_j(x))\}_{-}$ is weakly relatively compact in $L^1(E)$.  Then 
\begin{equation}
\label{miarapol} \liminf_{j\rightarrow\infty}\int_{E}f(x,z_j(x))\,\di x\geq \int_{E}\int_{\R^m}f(x,\lambda)\,\di \nu_x (\lambda)\,\di x\,.
\end{equation}
Additionally, if the sequence of functions $x\mapsto |f|(x,z_j(x))$ is weakly relatively compact in $L^1(E)$, then 
\begin{equation*}
f(\cdot,z_j(\cdot))\rightharpoonup \int_{\R^m}f(\cdot,\lambda)\,\di\nu_x(\lambda) \quad\mathrm{in} \quad L^1(E)\,.
\end{equation*}
\end{lem}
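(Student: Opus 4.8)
The statement is the standard weak lower semicontinuity result (and, under the stronger equiintegrability hypothesis, weak $L^1$‑continuity) for Young measures, and the plan is to reduce it, by truncation and a Scorza--Dragoni approximation, to the case of the Fundamental Theorem \ref{Ball}(ii), in which the integrand depends only on $\lambda$. The first step is the auxiliary fact that for every \emph{bounded} Carathéodory function $g\colon E\times\R^m\to\R$ one has $g(\cdot,z_j(\cdot))\overset{\ast}{\rightharpoonup}\bar g$ in $L^\infty(E)$ with $\bar g(x)=\int_{\R^m}g(x,\lambda)\,\di\nu_x(\lambda)$. To pass from the $\lambda$‑only test functions $\varphi\in C_0(\R^m)$ of Theorem \ref{Ball}(ii) to the $(x,\lambda)$‑dependence I would invoke the Scorza--Dragoni theorem (equivalently, strong measurability of $x\mapsto g(x,\cdot)$ as a map into $C(\overline{B_R})$): for every $\delta>0$ there is a compact $E_\delta\subset E$ with $|E\setminus E_\delta|<\delta$ on which $g$ is jointly continuous, hence uniformly approximable there by finite sums $\sum_i\chi_{A_i}(x)\varphi_i(\lambda)$ with $\varphi_i\in C_0(\R^m)$; Theorem \ref{Ball}(ii) handles each term, and letting the approximation error and then $\delta$ tend to $0$ (the set $E\setminus E_\delta$ contributing at most $\|g\|_\infty\,\delta$) yields the claim. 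Since $|E|<\infty$ this also gives $g(\cdot,z_j(\cdot))\rightharpoonup\bar g$ in $L^1(E)$ and $\int_E g(x,z_j)\,\di x\to\int_E\bar g\,\di x$.

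Next I would prove \eqref{miarapol} first for nonnegative $f$. Set $f_N(x,\lambda):=\min\{f(x,\lambda),N\}\,\eta_N(\lambda)$, where $\eta_N\in C_0(\R^m)$ is a nested family of cutoffs with $\eta_N\uparrow1$ and $\eta_N\equiv1$ on $B_N$, so each $f_N$ is bounded Carathéodory and $f_N\uparrow f$ pointwise. By the previous step and $0\le f_N\le f$,
\[
\liminf_{j\to\infty}\int_E f(x,z_j(x))\,\di x\ \ge\ \lim_{j\to\infty}\int_E f_N(x,z_j(x))\,\di x\ =\ \int_E\int_{\R^m}f_N(x,\lambda)\,\di\nu_x(\lambda)\,\di x ,
\]
and monotone convergence (first in the $\lambda$‑integral, then in $x$) lets me send $N\to\infty$ to obtain \eqref{miarapol} for $f\ge0$.

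For general $f$ I would split $f=f^+-f^-$; the bound just proved applies to $f^+$, while for $f^-$ the hypothesis says $\{f^-(\cdot,z_j(\cdot))\}$ is equiintegrable, so the same truncation together with the estimate
\[
\int_E\bigl|f^-(x,z_j)-\min\{f^-(x,z_j),N\}\,\eta_N(z_j)\bigr|\,\di x\ \le\ \int_{\{f^-(\cdot,z_j)>N\}} f^-(x,z_j)\,\di x+\int_{\{|z_j|\ge N\}} f^-(x,z_j)\,\di x
\]
upgrades the weak‑$\ast$ convergence to $f^-(\cdot,z_j(\cdot))\rightharpoonup\langle\nu_\cdot,f^-\rangle$ in $L^1(E)$: the first term on the right tends to $0$ uniformly in $j$ by equiintegrability, the second because $\sup_j|\{|z_j|\ge N\}|\to0$ (Theorem \ref{Ball}(iv), which uses $\|\nu_x\|_{\mathcal{M}(\R^m)}=1$ a.e.\ — the implicitly needed tightness, automatic whenever the relevant sequence is bounded in $L^p(E)$ for some $p>1$, as it is in the applications). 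Hence $\int_E f^-(x,z_j)\,\di x\to\int_E\int_{\R^m}f^-\,\di\nu_x\,\di x<\infty$, and combining with the $f^+$ bound (using $\liminf(a_j-b_j)=\liminf a_j-\lim b_j$ when $b_j$ converges) gives \eqref{miarapol} in general. Finally, if $\{|f(\cdot,z_j(\cdot))|\}=\{f^+(\cdot,z_j)+f^-(\cdot,z_j)\}$ is equiintegrable, then so is $\{f^+(\cdot,z_j(\cdot))\}$, the same argument gives $f^+(\cdot,z_j(\cdot))\rightharpoonup\langle\nu_\cdot,f^+\rangle$ in $L^1(E)$, and subtracting the two weak limits yields $f(\cdot,z_j(\cdot))\rightharpoonup\int_{\R^m}f(\cdot,\lambda)\,\di\nu_\cdot(\lambda)$ in $L^1(E)$.

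I expect the main obstacle to be exactly the first step: extending the Fundamental Theorem from integrands depending on $\lambda$ alone to Carathéodory integrands in $(x,\lambda)$, which is where Scorza--Dragoni (or the strong‑measurability argument) is essential, together with the careful bookkeeping of the mass that may escape to infinity — i.e.\ verifying that the $\lambda$‑cutoffs $\eta_N$ do not spoil the limit. Everything downstream of that is truncation plus monotone and dominated convergence.
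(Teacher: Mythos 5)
The paper never proves this lemma: it is quoted as background, with the proof delegated to the cited references (\cite[Corollaries 3.2--3.4]{MUller1999} and \cite[Theorem 2.9]{Alibert1997}), so there is no in-paper argument to compare against. Your proposal is essentially the standard proof from those sources and it is correct: the extension of Theorem \ref{Ball}(ii) from $\varphi(\lambda)$ to bounded Carath\'eodory integrands with compact support in $\lambda$ via Scorza--Dragoni and approximation by finite sums $\sum_i\chi_{A_i}(x)\varphi_i(\lambda)$, the monotone truncation $f_N=\min\{f,N\}\eta_N$ for $f\ge 0$, and the equiintegrability argument for $f^-$ (and then for $|f|$) all check out; your error estimate splitting off the sets $\{f^->N\}$ and $\{|z_j|\ge N\}$ is exactly the right bookkeeping. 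Two remarks. First, as you correctly flag, the treatment of $f^-$ genuinely requires the tightness condition $\|\nu_x\|_{\mathcal{M}(\R^m)}=1$ a.e.\ from Theorem \ref{Ball}(iv): without it the statement is literally false (take $z_j=j$ on half of $E$, $z_j=0$ elsewhere, and $f\equiv-1$; the left side of \eqref{miarapol} is $-|E|$ while the right side is $-|E|/2$, and $f^-\equiv 1$ is trivially equiintegrable). The lemma as stated omits this hypothesis, but it is harmless here because every generating sequence in the paper ($\dev T^k_m$, $\theta^k_m$) is bounded in some $L^p$, which yields tightness; making this explicit, as you do, is the right call. Second, a cosmetic point: in the Scorza--Dragoni step, for weak-$\ast$ convergence tested against an arbitrary $h\in L^1(E)$ the exceptional set contributes $\|g\|_\infty\int_{E\setminus E_\delta}|h|\,\di x$ rather than $\|g\|_\infty\,\delta$, but this tends to zero as $\delta\to 0$ by absolute continuity of the integral, so nothing downstream is affected.
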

And the last property is follow
\begin{lem}
\label{product}
Let $u_j:E\rightarrow \R^n$, $v_j:E\rightarrow\R^m$ be measurable and suppose that $u_j\rightarrow u$ a.e. while $v_j$ generates the Young measure $\nu$. Then the sequence of pairs $(u_j,v_j):E\rightarrow\R^{n+m}$ generates the Young measure $x\rightarrow \delta_{u(x)}\otimes\nu_x$.
\end{lem}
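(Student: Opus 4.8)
The plan is to pin down the Young measure generated by the pairs $(u_j,v_j)$ by testing only against tensor products $f(\lambda,\eta)=\phi(\lambda)\psi(\eta)$ with $\phi\in C_0(\R^n)$, $\psi\in C_0(\R^m)$. The linear span of such products is a subalgebra of $C_0(\R^{n+m})$ that separates points and vanishes nowhere, hence it is dense by the locally compact Stone--Weierstrass theorem; since, via the Riesz representation, a weak$^{\ast}$ measurable family of bounded Radon measures is determined by its action on a countable dense subset of $C_0(\R^{n+m})$, it suffices to compute the weak$^{\ast}$ limit of $f(u_j,v_j)$ for these $f$ and to match it with $\langle\delta_{u(x)}\otimes\nu_x,f\rangle=\phi(u(x))\langle\nu_x,\psi\rangle$.

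First I would pass to a subsequence along which, by Theorem \ref{Ball}, the pairs $(u_j,v_j)$ generate some Young measure $\mu_x:E\to\mathcal{M}(\R^{n+m})$ with $\|\mu_x\|_{\mathcal{M}(\R^{n+m})}\leq1$. Since $E$ has finite measure and $\phi(u_j)\to\phi(u)$ a.e.\ with $|\phi(u_j)|\leq\|\phi\|_{\infty}$, dominated convergence gives $\phi(u_j)\to\phi(u)$ strongly in $L^1(E)$, while Theorem \ref{Ball}(ii) applied to $v_j$ yields $\psi(v_j)\overset{\ast}{\rightharpoonup}\bar\psi$ in $L^{\infty}(E)$ with $\bar\psi(x)=\langle\nu_x,\psi\rangle$. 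For $g\in L^1(E)$ I would split
\[
\int_E g\,\phi(u_j)\psi(v_j)\,\di x=\int_E g\,\phi(u)\psi(v_j)\,\di x+\int_E g\,\big(\phi(u_j)-\phi(u)\big)\psi(v_j)\,\di x,
\]
where the first term tends to $\int_E g\,\phi(u)\bar\psi\,\di x$ because $g\,\phi(u)\in L^1(E)$, and the second is bounded by $\|\psi\|_{\infty}\int_E|g|\,|\phi(u_j)-\phi(u)|\,\di x\to0$. Hence $\phi(u_j)\psi(v_j)\overset{\ast}{\rightharpoonup}\phi(u)\bar\psi$ in $L^{\infty}(E)$. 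Comparing with the limit function $x\mapsto\langle\mu_x,\phi\otimes\psi\rangle$ produced by Theorem \ref{Ball}(ii), uniqueness of the weak$^{\ast}$ limit gives $\langle\mu_x,\phi\otimes\psi\rangle=\phi(u(x))\langle\nu_x,\psi\rangle$ for a.e.\ $x\in E$.

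Running $(\phi,\psi)$ through a countable family of products dense in $C_0(\R^{n+m})$ and discarding the corresponding countable union of null sets, the identity $\langle\mu_x,f\rangle=\langle\delta_{u(x)}\otimes\nu_x,f\rangle$ holds for all $f$ in that family and a.e.\ $x$; by linearity, density and the continuity of $f\mapsto\langle\mu_x,f\rangle$ on $C_0(\R^{n+m})$ this forces $\mu_x=\delta_{u(x)}\otimes\nu_x$ for a.e.\ $x\in E$. To remove the subsequence I would observe that the only hypotheses used are "$u_j\to u$ a.e." and "$v_j$ generates $\nu$", both inherited by every subsequence, so every subsequence of $(u_j,v_j)$ has a further subsequence generating $\delta_{u(\cdot)}\otimes\nu_{\cdot}$; the limit being always the same, the whole sequence generates it. The one place that deserves care is precisely this density/uniqueness bookkeeping — making sure that knowing $\langle\mu_x,\cdot\rangle$ on tensor products pins the measure down for almost every fixed $x$ — whereas the analytic core is just the elementary fact that the product of an a.e.\ (hence strongly $L^1$) convergent sequence and a bounded weak$^{\ast}$ convergent sequence converges weak$^{\ast}$ to the product of the limits.
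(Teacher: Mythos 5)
Your proposal is correct. Note that the paper does not prove Lemma \ref{product} at all: it is stated as a known tool, with the proof delegated to the cited references (M\"uller's lecture notes, Corollaries 3.2--3.4, and Alibert--Bouchitt\'e), and your argument is essentially the standard proof found there --- extract a Young measure $\mu_x$ for the pairs via Theorem \ref{Ball}, test it against tensor products $\phi\otimes\psi$, use that $\phi(u_j)\to\phi(u)$ a.e.\ and boundedly while $\psi(v_j)\overset{\ast}{\rightharpoonup}\langle\nu_x,\psi\rangle$ in $L^{\infty}(E)$, pass from tensor products to all of $C_0(\R^{n+m})$ by the locally compact Stone--Weierstrass theorem together with a countable dense family and one exceptional null set, and remove the subsequence by the usual sub-subsequence argument (which is legitimate because weak$^{\ast}$ convergence against each fixed $g\in L^1(E)$ is convergence of a numerical sequence). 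One small point of justification: for the error term $\int_E |g|\,|\phi(u_j)-\phi(u)|\,\di x$ with a general $g\in L^1(E)$, the strong $L^1$ convergence of $\phi(u_j)$ that you invoke is not by itself the right reason (you cannot pair two $L^1$ functions); what actually makes this integral vanish is dominated convergence with dominating function $2\|\phi\|_{\infty}|g|\in L^1(E)$, which is exactly what your a.e.\ convergence plus the uniform bound provide. With that reading, every step is sound, and the identification $\langle\mu_x,\phi\otimes\psi\rangle=\phi(u(x))\langle\nu_x,\psi\rangle$ indeed pins down $\mu_x=\delta_{u(x)}\otimes\nu_x$ a.e., since both are bounded Radon measures determined by their action on a dense subset of $C_0(\R^{n+m})$.
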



\subsection{Passing to the limit with Galerkin approximation}
To characterise weak limits of nonlinearities we are going to improve the convergence of the sequence $\{\dev(T^k_m)\}_{m=1}^\infty$. To do this we use the idea from \cite{GWIAZDA2005923,tve-Orlicz}, where the Young measure tools was used. Let us define the operator $G:\R\times \SS\rightarrow\SS$ by the formula
\begin{equation}
\label{operator}
  G(\theta,S):= \big\{|\dev S|-\beta(\theta+\tilde{\theta})\big\}^{r}_{+}\,\frac{\dev S}{|\dev S|} + \frac{1}{k}|\dev S|^{2r-1}\,\frac{\dev S}{|\dev S|}
\end{equation}
It is worth emphasizing that the article \cite{GWIAZDA2005923} proves a general theorem (Theorem $1.2$) that could be applied to the operator $G$ from \eqref{operator}. However, our field explicitly fails to meet one of the main assumption of this theorem, so we decided to present the full calculus related to field \eqref{operator}. Let us consider the function $G(\theta^k_m,\dev T^k_m)\cdot \dev T^k_m$. It is easy to observe that $G(\theta^k_m,\dev T^k_m)\cdot \dev T^k_m\geq 0$ for every $m\in\mathbb{N}$, hence the sequence of negative part of $G(\theta^k_m,\dev T^k_m)\cdot \dev T^k_m$ is relatively weakly compact in $L^1(\Omega\times (0,\T))$. The Lemma \ref{slpolciaYoung} yields
\begin{equation}
\label{operator1}
 \liminf_{m\rightarrow\infty}\int_{Q} G(\theta^k_m,\dev T^k_m)\cdot \dev T^k_m\,\di x\,\di t\geq \int_{Q} \int_{\R\times \R^6}G(s,\lambda)\cdot\lambda\,\di \mu_{(x,t)}(s,\lambda)\,\di x\, \di t\,,
\end{equation}
where $Q=\Omega\times (0,\T)$ and $\mu_{(x,t)}$ is the Young measure generated by the sequence\\ $\{(\theta^k_m,\dev T^k_m)\}_{m=1}^\infty$. Using Lemma \ref{product}, we can characterise this measure more precisely. We know that $\theta_m^k\rightarrow \theta^k$ a.e. in $\Omega\times (0,\T)$ and that the sequence $\{\dev T_m^k\}_{m=1}^\infty$ generates the Young measure $\nu_{(x,t)}$, so 
\begin{equation}
\label{measure}
\mu_{(x,t)}(s,\lambda)=\delta_{\theta^k(x,t)}(s)\otimes \nu_{(x,t)}(\lambda)
\end{equation}
and 
\begin{equation}
\label{operator2}
\int_{Q} \int_{\R\times \R^6}G(s,\lambda)\cdot\lambda\,\di \mu_{(x,t)}(s,\lambda)\,\di x\, \di t= \int_{Q} \int_{\R^6} G(\theta^k,\lambda)\cdot\lambda\,\di \nu_{(x,t)}(\lambda)\,\di x\, \di t\,.
\end{equation}
The last two convergence in \eqref{weaklimm} imply that the sequence $\{G(\theta^k_m,\dev T^k_m)\}_{m=1}^\infty$ is bounded in $L^{\frac{2r}{2r-1}}(\Omega\times(0,\T))$ ($\frac{2r}{2r-1}<\frac{r}{r-1}$ for $r>1$), hence it is weakly relatively compact in $L^1(\Omega\times (0,\T))$. Theorem \ref{Ball} implies
\begin{equation}
\label{operator3}
\omega^k(x,t)=\int_{\R^6} G(\theta^k,\lambda)\cdot\lambda\,\di \nu_{(x,t)}(\lambda)\,.
\end{equation}
Similarly, we conclude that $\dev T_m^k(x,t)=\int_{\R^6}\lambda\,\di \nu_{(x,t)}(\lambda)$. Combining \eqref{operator2} and \eqref{operator1} with \eqref{limsupmain} we obtain
\begin{equation}
\label{operator4}
    \begin{split}
 \int_{Q} \int_{\R\times \R^6}& G(s,\lambda)\cdot\lambda\,\di \mu_{(x,t)}(s,\lambda)\,\di x\, \di t\leq  \liminf_{m\rightarrow\infty}\int_{Q} G(\theta^k_m,\dev T^k_m)\cdot \dev T^k_m\,\di x\,\di t\\[1ex]  
 &\leq \limsup_{m\rightarrow\infty}\int_{Q} G(\theta^k_m,\dev T^k_m)\cdot \dev T^k_m\,\di x\,\di t\\[1ex]
 &\leq\int_{Q} \int_{\R^6} G(\theta^k,\lambda)\cdot\lambda\,\di \nu_{(x,t)}(\lambda)\cdot \int_{\R^6}\lambda \,\di \nu_{(x,t)}(\lambda) \,\di x\,\di t\,.
    \end{split}
\end{equation}
The above information will allow us to proof that the Young measure $\nu_{(x,t)}$ is a Dirac measure i.e. $\nu_{(x,t)}=\delta_{\dev T^k(x,t)}$ for almost all $(x,t)\in \Omega\times (0,\T)$. This will be accomplished by showing that the integral 
\begin{equation}
\label{Young1}
    \int_{Q}\int_{\R^6} h(\xi)\,\di\nu_{(x,t)}(\xi)\,\di x\,\di t
\end{equation}
is equal to $0$, where the function $h(\cdot)$ is defined by the formula 
\begin{equation}
\label{Young2}
 h(\xi):=\Big( G(\theta^k,\xi)- G\Big(\theta^k,\int_{\R^6}\xi\,\di \nu_{(x,t)}(\xi)\Big)\Big)\cdot\Big(\xi-\int_{\R^6}\xi\,\di \nu_{(x,t)}(\xi)\Big)\,.
\end{equation}
Monotonicity of the operator $G(\theta^k,(\cdot))$ yields that \begin{equation}
\label{Young3}
    \int_{Q}\int_{\R^6} h(\xi)\,\di\nu_{(x,t)}(\xi)\,\di x\,\di t\geq 0\,.
\end{equation}
Therefore, 
\begin{equation}
\label{Young4}
\begin{split}
\int_{Q}\int_{\R^6}& h(\xi)\,\di\nu_{(x,t)}(\xi)\,\di x\,\di t\\[1ex]
&= \int_{Q}\int_{\R^6} G(\theta^k,\xi)\cdot\Big(\xi-\int_{\R^6}\xi\,\di \nu_{(x,t)}(\xi)\Big) \,\di\nu_{(x,t)}(\xi)\,\di x\,\di t\\[1ex]
&\hspace{2ex}-\int_{Q}\int_{\R^6} G\Big(\theta^k,\int_{\R^6}\xi\,\di \nu_{(x,t)}(\xi)\Big)\cdot\Big(\xi-\int_{\R^6}\xi\,\di \nu_{(x,t)}(\xi)\Big)  \,\di\nu_{(x,t)}(\xi)\,\di x\,\di t\,.
\end{split}    
\end{equation}
Changing the variables and simple calculations imply that the second term on the right-hand side of \eqref{Young4} is equal to zero, hence \eqref{Young4} is in the following form 
\begin{equation}
\label{Young5}
\begin{split}
\int_{Q}\int_{\R^6} h(\xi)\,\di\nu_{(x,t)}(\xi)\,\di x\,\di t
&= \int_{Q}\int_{\R^6} G(\theta^k,\xi)\cdot\xi \,\di \nu_{(x,t)}(\xi)\,\di x\,\di t\\[1ex]
&\hspace{2ex} - \int_{Q}\int_{\R^6} G(\theta^k,\xi)\,\di\nu_{(x,t)}(\xi)\cdot\int_{\R^6}\xi\,\di \nu_{(x,t)}(\xi) \,\di x\,\di t\,.
\end{split}    
\end{equation}
\eqref{Young5} together with \eqref{operator4} and \eqref{Young3}, assures that
\begin{equation}
\label{Young6}
    \int_{Q}\int_{\R^6} h(\xi)\,\di\nu_{(x,t)}(\xi)\,\di x\,\di t\leq 0\,.
\end{equation}
Let us observe that the vector field $G(\theta,(\cdot))$ is strictly monotone, i.e. for all $S_1$, $S_2\in\SS$ such that $S_1\neq S_2$ we have 
\begin{equation}
\label{striclymono}
(G(\theta^k, S_1)-G(\theta^k,S_2))\cdot(S_1-S_2)>0\,.    
\end{equation}
We note that \eqref{striclymono} is true because the second component of $G(\theta^k,S)$ is strictly monotone. Therefore $\mathrm{supp}\, h(\cdot)=\{\xi\in\R^6:\, \xi(x,t)\neq\int_{\R^6}\xi\,\di \nu_{(x,t)}(\xi) \}$ and
\begin{equation}
\label{Young7}
    \int_{\R^6} h(\xi)\,\di\nu_{(x,t)}(\xi)= 0
\end{equation}
for almost all $(x,t)\in \Omega\times (0,\T)$. Measure $\nu_{(x,t)}$ is a probability measure ($\nu_{(x,t)}\geq 0$), so \eqref{Young7} implies that the support of the function $h(\cdot)$ and measure $\nu_{(x,t)}$ are disjoint a.a. $(x,t)\in \Omega\times (0,\T)$. Which yields $$\mathrm{supp}\,\nu_{(x,t)}=\{\xi\in\R^6:\, \xi(x,t)=\int_{\R^6}\xi\,\di \nu_{(x,t)}(\xi)=\dev T^k(x,t) \}$$
for almost all $(x,t)\in \Omega\times (0,\T)$ and $\nu_{(x,t)}=\delta_{\dev T^k(x,t)}$ for almost all $(x,t)\in \Omega\times (0,\T)$. A direct application
of Lemma \ref{deltadiraca} leads to $\dev T^k_m\rightarrow \dev T^k$ in measure hence
\begin{equation}
\label{punktowazb}
\dev T^k_m(x,t)\rightarrow \dev T^k(x,t) \quad \mathrm{a.e.\, in}\quad \Omega\times(0,\T)\,,
\end{equation}
which immediately gives 
\begin{equation*}
\psi ^k(x,t)=\big\{|\dev(T^k(x,t))|-\beta(\theta^k(x,t)+\tilde{\theta}(x,t))\big\}^{r}_{+}\,\frac{\dev(T^k(x,t))}{|\dev(T^k(x,t))|}\quad\mathrm{a.\,e.}\quad (x,t)\in\Omega\times (0,\T)
\end{equation*}
and
\begin{equation*}
\chi ^k(x,t)=\frac{1}{k}|\dev(T^k(x,t))|^{2r-1}\,\frac{\dev(T^k(x,t))}{|\dev(T^k(x,t))|}\quad\mathrm{a.\,e.}\quad (x,t)\in\Omega\times (0,\T)\,.
\end{equation*}
Let us define
\begin{equation*}
g(\theta, S):=\TC_k \big(\big\{|\dev(S)|-\beta(\theta+\tilde{\theta})\big\}^{r}_{+}|\dev(S)| \big)
\end{equation*}
for $\theta\in\R$ and $S\in\S$. The information \eqref{punktowazb} and Conclusion \ref{wnioski} ($iii)$ entail that  
\begin{equation*}
g\big(\theta^k_m(x,t), T^k_m(x,t)\big)-g\big(\theta^k(x,t), T^k(x,t)\big)\rightarrow 0 \quad \mathrm{a.e.\, in}\quad \Omega\times(0,\T)\,.
\end{equation*}
Additionally, $\big|g\big(\theta^k_m(x,t), T^k_m(x,t)\big)-g\big(\theta^k(x,t), T^k(x,t)\big)\big|\leq 2k$, so the Dominated Lebesgue theorem implies 
\begin{equation}
\label{charakteryzacja1}
g\big(\theta^k_m, T^k_m\big)-g\big(\theta^k, T^k\big)\rightarrow 0 \quad \mathrm{in}\quad L^2(0,\T;L^2(\Omega;\R))\,.
\end{equation}
The convergence obtained in \eqref{charakteryzacja1} allows the passage in this system \eqref{app_system} to the limit with $m\rightarrow\infty$. Notice that we only have to do it in the equation $\eqref{app_system}_3$, because in the other equations it has already been done in \eqref{weaklimit12}. Let us fix the natural number $N$ and consider the following function $v\in C^1([0,\T];H^1(\Omega))$ in the form 
\begin{equation}
\label{form1}
v(t)=\sum_{l=1}^N d^l(t) v_k\,,
\end{equation}
where $\{d^l\}_{l=1}^N$ are given smooth functions and $v_k$ are the solution of the \eqref{wektoryw}, which are also smooth.  Assume that $m\geq N$, then multiplying $\eqref{app_system}_3$ by $d^l(t)$, summing up to $N$ and integrate over time, we get
\begin{equation}
\label{temp}
\begin{split}
\int_0^\T\int_{\Omega}&(\theta^k_{m,_{t}})\, v(t)\,\di x\,\di t  + \int_0^\T\int_{\Omega}\nabla\theta^k _{m}\nabla v(t)\, \di x\,\di t\\[1ex]
&+\int_0^{\T}\int_{\Omega} f\big(\TC_k(\tilde{\theta}+ \theta_{m}^k )\big)\mathrm{div} (\tilde{u}_t + u^k _{m,_{t}})\, v(t) \,\di x\,\di t\\[1ex]
=&  \int_0^\T\int_{\Omega} \TC_k \big(\big\{|\dev(T^k_m)|-\beta(\theta_m^k+\tilde{\theta})\big\}^{r}_{+}|\dev(T^k_m)|  \big)\,v(t)\, \di x\,\di t\,.
 \end{split}
\end{equation}
The most problematic term in \eqref{temp} is the third term on the left-hand side. Observe that 
\begin{equation}
\label{temp1}
\begin{split}
\int_0^\T\int_{\Omega}& \big|f\big(\TC_k(\tilde{\theta}+ \theta_{m}^k )\big) v(t)-f\big(\TC_k(\tilde{\theta}+ \theta^k )\big) v(t)\big|^2\,\di x\,\di t\\[1ex]
&\leq \int_0^\T \big\|\big(f\big(\TC_k(\tilde{\theta}+ \theta_{m}^k )\big) -f\big(\TC_k(\tilde{\theta}+ \theta^k )\big)\big)^2\big\|_{L^2(\Omega)}\|v^2(t)\|_{L^2(\Omega)}\,\di t\\[1ex]
& = \int_0^\T \big\|f\big(\TC_k(\tilde{\theta}+ \theta_{m}^k )\big) -f\big(\TC_k(\tilde{\theta}+ \theta^k )\big)\big\|^2_{L^4(\Omega)}\|v(t)\|^2_{L^4(\Omega)}\,\di t\\[1ex]
&\leq \big\|f\big(\TC_k(\tilde{\theta}+ \theta_{m}^k )\big) -f\big(\TC_k(\tilde{\theta}+ \theta^k )\big)\big\|^2_{L^4(0,\T;L^4(\Omega))}\|v\|^2_{L^4(0,\T;L^4(\Omega))}\,.
\end{split}
\end{equation}
The convergence \eqref{mocnatemp} and regularity of the function $v$ imply that \eqref{temp1} tends to zero, hence
\begin{equation}
\label{temp2}
f\big(\TC_k(\tilde{\theta}+ \theta_{m}^k )\big) v\rightarrow f\big(\TC_k(\tilde{\theta}+ \theta^k )\big)v\quad \mathrm{in}\quad L^2(0,\T;L^2(\Omega))\,.
\end{equation}
The information \eqref{temp2} allows us to use standard tools in the Galerkin method and obtain
\begin{equation}
\label{temp3}
\begin{split}
\int_{\Omega}&\theta^k_{t}\, v\,\di x  + \int_{\Omega}\nabla\theta^k\nabla v\, \di x
+\int_{\Omega} f\big(\TC_k(\tilde{\theta}+ \theta^k )\big)\mathrm{div} (\tilde{u}_t + u^k_{t})\, v \,\di x\\[1ex]
=& \int_{\Omega} \TC_k \big(\big\{|\dev(T^k)|-\beta(\theta^k+\tilde{\theta})\big\}^{r}_{+}|\dev(T^k)|  \big)\,v\, \di x
 \end{split}
\end{equation}
for all $v\in H^1(\Omega)$ and almost all $t\in (0,\T)$. Additionally we got  
\begin{equation}
\int_{\Omega}  \big(T^k - f\big(\TC_k(\tilde{\theta}+\theta^k )\big)\id\big) \varepsilon(w)\, \di x + \int_{\Omega}\D(\varepsilon(u^k_{t})) \,\varepsilon(w)\,\di x = 0
\label{balancek}
\end{equation}
for all $w\in H^1_0(\Omega;\R^3)$ and almost all $t\in (0,\T)$ and 
\begin{equation}
\begin{aligned}
\int_{\Omega}\D^{-1} T^k_{t}\,\tau\,\di x&+ \int_{\Omega}\big\{|\dev(T^k)|-\beta(\theta^k+\tilde{\theta})\big\}^{r}_{+}\,\frac{\dev(T^k)}{|\dev(T^k)|}\,\tau\,\di x
\\[1ex]
&+\frac{1}{k}\int_{\Omega}|\dev(T^k)|^{2r-1}\,\frac{\dev(T^k)}{|\dev(T^k)|}\,\tau\,\di x
\\[1ex]
&=\int_{\Omega}\big(\ve(u^k_{t})+\ve(\tilde{u}_t)\big)\tau\,\di x
\end{aligned}
\label{flowrulek}
\end{equation}
for all $\tau\in L^2(\Omega;\S)\times L^{2r}(\Omega;\SS)$ and almost all $t\in (0,\T)$. Formula \eqref{flowrulek} completes the proof of the existence of solutions for any approximation step $k> 0$.

\section{Proof of the Main Result}
\subsection{Estimates independent on truncation}
In this chapter we are going to pass to the limit with $k\rightarrow\infty$ and obtain solutions in the sense of Definition \ref{Maindef}.
At the beginning we prove some a priori estimates for the sequence of approximate solutions $\{(T^k, u^{k},\theta^{k})\}_{k>0}$. First the energy estimate is demonstrated.
\renewcommand{\theequation}{\thesection.\arabic{equation}}
\setcounter{equation}{0}%
\begin{tw}
\label{tw:4.1}
Assume that the given data satisfy all requirements of \eqref{regularity}.
Then there exists a positive constant $C(\T)$ (not depending on $k>0$) such that the following inequality holds
\begin{eqnarray*}
\|T^{k}(t)\|^2_{L^2(\Omega)} + \frac{1}{k}\|\dev(T^{k}(t))\|^{2r}_{L^{2r}(\Omega)}+ \int_0^t\|\ve(u_t^{k})\|^2_{L^2(\Omega)}\,\di\tau+\int_{\Omega}|\theta^{k}(t)|\,\di x \leq\,\, C(\T)\,,
\end{eqnarray*}
where $0<t\leq \T$.
\end{tw}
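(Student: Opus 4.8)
The plan is to establish a single energy estimate for the $L^2$-strong solution $(u^k,T^k,\theta^k)$ furnished by Theorem~\ref{istnieniek}, with all constants independent of $k$. First I would test the momentum equation~\eqref{balancek} with $w=u^k_t\in H^1_0(\Omega;\R^3)$, the flow rule~\eqref{flowrulek} with $\tau=T^k$, and the heat equation~\eqref{temp3} with the constant function $v\equiv1$, and add the three resulting identities. The decisive structural observation is that the thermal‑expansion term $\int_\Omega f\big(\TC_k(\tilde\theta+\theta^k)\big)\dyw u^k_t\,\di x$ enters the momentum identity (via $\id\cdot\ve(u^k_t)=\dyw u^k_t$) and the heat identity with opposite signs, and therefore cancels exactly; this is the whole reason why the notoriously difficult term is harmless uniformly in $k$, and it is why one must test the heat equation here with $1$ rather than with $\theta^k$ as in the $m$-level estimate of Theorem~\ref{oszacowanie0}. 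Integrating over $(0,t)$ and using the positive definiteness of $\D$ and $\D^{-1}$, $\|\dyw v\|_{L^2}\le\sqrt3\,\|\ve(v)\|_{L^2}$, and the elementary bound $\TC_k(a)\le a$ for $a\ge0$ (so that the truncated dissipation on the right is dominated by the genuine dissipation $\{|\dev(T^k)|-\beta(\theta^k+\tilde\theta)\}^r_+|\dev(T^k)|$ on the left, leaving a nonnegative remainder), one arrives at
\[
c\|T^k(t)\|^2_{L^2}+c\!\int_0^t\!\|\ve(u^k_t)\|^2_{L^2}\di\tau+\frac1k\!\int_0^t\!\!\int_\Omega\!|\dev(T^k)|^{2r}\di x\,\di\tau+\int_\Omega\theta^k(t)\,\di x\le C_0+I_1^k+I_2^k,
\]
where $C_0$ depends only on the data of~\eqref{regularity} (note $|\TC_k(\theta_0)|\le|\theta_0|\in L^1(\Omega)$ uniformly in $k$), $I_1^k=\int_0^t\!\!\int_\Omega\ve(\tilde u_t)\,T^k\,\di x\,\di\tau$ and $I_2^k=-\int_0^t\!\!\int_\Omega f\big(\TC_k(\tilde\theta+\theta^k)\big)\dyw\tilde u_t\,\di x\,\di\tau$.

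The second step absorbs the right-hand side. The term $I_1^k$ is treated by Young's inequality, absorbing $\eta\int_0^t\|T^k\|^2_{L^2}$ into the left side and leaving $\int_0^t\|\ve(\tilde u_t)\|^2_{L^2}<\infty$ (recall $\tilde u_t\in L^{1/(1-\alpha)}(0,\T;W^{1,1/(1-\alpha)})$ with $\tfrac1{1-\alpha}>2$). For $I_2^k$ I would first note that \eqref{warwzrostu}, \eqref{warwzrostu1} and $\alpha>\tfrac12$ give the uniform bound $|f(\TC_k(s))|\le C(1+|s|^{\alpha})$ for all $s\in\R$, hence $|f(\TC_k(\tilde\theta+\theta^k))|\le C(1+|\tilde\theta|^{\alpha}+|\theta^k|^{\alpha})$; Hölder's inequality with the conjugate exponents $\tfrac1\alpha$ and $\tfrac1{1-\alpha}$ then yields
\[
|I_2^k|\ \le\ C\int_0^t\big(1+\|\tilde\theta\|^{\alpha}_{L^1(\Omega)}+\|\theta^k\|^{\alpha}_{L^1(\Omega)}\big)\,\|\dyw\tilde u_t\|_{L^{1/(1-\alpha)}(\Omega)}\,\di\tau,
\]
and a further Young inequality absorbs $\eta\int_0^t\|\theta^k\|_{L^1(\Omega)}\,\di\tau$ while leaving a finite quantity, because $\tilde\theta\in L^2(0,\T;L^2(\Omega))\subset L^2(0,\T;L^1(\Omega))$, $\dyw\tilde u_t\in L^{1/(1-\alpha)}$ in time, and $\tfrac\alpha2+(1-\alpha)\le1$.

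It remains to control $\|\theta^k(t)\|_{L^1}$, not merely the signed integral $\int_\Omega\theta^k(t)$. Here I would test~\eqref{temp3} with $-h_\delta(\theta^k)$, where $h_\delta$ is a smooth monotone approximation of $\mathbf 1_{(-\infty,0)}$ (admissible since $\theta^k\in L^\infty(0,\T;H^1(\Omega))$): the diffusion term contributes a nonnegative quantity, and the nonnegative truncated dissipation multiplied by the nonpositive weight contributes a favourable sign, so that, letting $\delta\to0$,
\[
\frac{\di}{\di t}\int_\Omega(\theta^k)_-\,\di x\ \le\ \int_{\{\theta^k<0\}}\big|f\big(\TC_k(\tilde\theta+\theta^k)\big)\big|\,\big|\dyw(\tilde u_t+u^k_t)\big|\,\di x.
\]
On $\{\theta^k<0\}$, using \eqref{warwzrostu1} when $\tilde\theta+\theta^k\le0$ and \eqref{warwzrostu} together with $\TC_k(\tilde\theta+\theta^k)\le\tilde\theta$ when $\tilde\theta+\theta^k>0$, one gets $|f(\TC_k(\tilde\theta+\theta^k))|\le C\big(1+|\tilde\theta|^{\alpha}+|(\theta^k)_-|^{1/2}\big)$, and the square-root growth is exactly what renders the $\dyw u^k_t$ contribution --- known only in $L^2$ --- manageable, since $\int_\Omega|(\theta^k)_-|^{1/2}|\dyw u^k_t|\le\|(\theta^k)_-\|^{1/2}_{L^1}\|\dyw u^k_t\|_{L^2}$ and Young's inequality absorbs $\eta\|(\theta^k)_-\|_{L^1}$ and leaves $\|\dyw u^k_t\|^2_{L^2}$, already controlled on $(0,t)$ by the first estimate. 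Combining the two estimates via $\|\theta^k(t)\|_{L^1}=\int_\Omega\theta^k(t)+2\int_\Omega(\theta^k(t))_-$, I obtain $\Phi^k(t)\le C+D\int_0^t\Phi^k(\tau)\,\di\tau$ for $\Phi^k(t):=\|T^k(t)\|^2_{L^2(\Omega)}+\|\theta^k(t)\|_{L^1(\Omega)}$ with $C,D$ independent of $k$, and Gronwall's lemma closes the estimate; the bounds on $\int_0^t\|\ve(u^k_t)\|^2_{L^2}$ and on the dissipative terms then follow, and the pointwise-in-time term $\tfrac1k\|\dev(T^k(t))\|^{2r}_{L^{2r}}$ is recovered by an additional test of~\eqref{flowrulek} with $T^k_t$, as in the proof of Theorem~\ref{oszacowanie2}, using $\{|\dev(T^k)|-\beta(\theta^k+\tilde\theta)\}^r_+\le|\dev(T^k)|^r$ and (C3).

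I expect the main obstacle to be the pair of thermal-expansion terms $f(\theta)\dyw u_t$ and $f(\theta)\dyw\tilde u_t$. The first must be made to vanish through the cancellation described above --- which is possible only because of the damping term $\dyw\D\ve(u_t)$ in~\eqref{BM} --- while the second, which cannot cancel since it carries the inhomogeneous Dirichlet datum $g_D$, has to be absorbed via the sharp interplay of the sublinear exponent $\alpha\in(\tfrac12,\tfrac56)$, the atypical regularity $\tilde u_t\in L^{1/(1-\alpha)}(0,\T;W^{1,1/(1-\alpha)})$ imposed in~\eqref{regularity}, and --- for the negative part of the temperature --- the square-root bound~\eqref{warwzrostu1}; arranging all of these absorptions to close simultaneously in one Gronwall inequality is the technical core of the argument.
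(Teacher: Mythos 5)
Your proposal is correct in its essential content, but it takes a genuinely different route from the paper for the critical coupling term. The paper does not use your exact cancellation: it multiplies the momentum equation and the flow rule by a fixed $M>0$, tests the heat equation with $\TC_M(\theta^{k})$, and then estimates the surviving term $\int_0^t\int_\Omega f\big(\TC_k(\tilde{\theta}+\theta^{k})\big)\,\mathrm{div}\,u^{k}_t\,\big(M-\TC_M(\theta^{k})\big)\,\di x\,\di\tau$ by Cauchy--Schwarz, splitting $\Omega\times(0,t)$ according to $\theta^{k}\le -M$ and $|\theta^{k}|<M$ and invoking the square-root growth \eqref{warwzrostu1} exactly where $\theta^{k}+\tilde{\theta}<0$; the $L^1$-norm of $\theta^{k}(t)$ is produced on the left through the primitive $\varphi_M$ of $\TC_M$ (linear growth at infinity), and a single Gronwall closes everything, the $\mathrm{div}\,\tilde{u}_t$-term being handled by the same H\"older/Young computation you describe (the paper's \eqref{45}). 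Your variant --- testing the heat equation with $v\equiv1$ so that $f\big(\TC_k(\tilde{\theta}+\theta^{k})\big)\mathrm{div}\,u^{k}_t$ cancels identically against the momentum identity, and recovering $\|\theta^{k}(t)\|_{L^1(\Omega)}$ by a second, sign-localized test $-h_\delta(\theta^{k})$ using \eqref{warwzrostu1} on $\{\theta^{k}<0\}$ --- yields the same uniform bounds, provided the two inequalities are combined with the Young weight in the negative-part estimate chosen small enough that the resulting $\|\mathrm{div}\,u^{k}_t\|^2_{L^2}$ contributions are absorbed by $\int_0^t\int_\Omega\D\ve(u^{k}_t)\,\ve(u^{k}_t)\,\di x\,\di\tau$ inside one Gronwall loop; they are not ``already controlled by the first estimate'' at that stage, since the first inequality is not yet closed on its own. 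The trade-off is that the paper's single test function does both jobs at once, while your decomposition isolates the cancellation structurally and makes transparent that only $f(\cdot)\,\mathrm{div}\,\tilde{u}_t$ and the negative part of $\theta^{k}$ actually need the growth assumptions.

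One step does not work as stated: the recovery of the pointwise-in-time term $\frac1k\|\dev(T^{k}(t))\|^{2r}_{L^{2r}(\Omega)}$ by testing \eqref{flowrulek} with $T^{k}_t$. At level $k$ (unlike the finite-dimensional situation of Theorem \ref{oszacowanie2}) the admissibility of $\tau=T^{k}_t$ is unclear, since $\dev(T^{k}_t)\in L^{2r}$ is not known; more importantly, the resulting identity carries the correction term $\int_\Omega\big\{|\dev(T^{k})|-\beta(\theta^{k}+\tilde{\theta})\big\}^{r}_{+}\,\beta'(\theta^{k}+\tilde{\theta})\,(\theta^{k}_t+\tilde{\theta}_t)\,\di x$, and bounding it requires $L^2(0,\T;L^2(\Omega))$ control of $\theta^{k}_t$ and of $\big\{\cdot\big\}^{r}_{+}$ which is available only with constants depending on $k$ (Theorems \ref{oszacowanie1} and \ref{oszacowanie0} are uniform in $m$, not in $k$). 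Note, however, that the paper's own proof of Theorem \ref{tw:4.1} also produces only the time-integrated quantity $\frac{1}{k}\int_0^t\int_\Omega|\dev(T^{k})|^{2r}\,\di x\,\di\tau$ on the left-hand side (see \eqref{42}--\eqref{44}), and only this integrated bound is used later (Lemma \ref{tw:4.7}, Remark \ref{col:4.6}); so your main argument establishes exactly what the paper's proof establishes, and the pointwise-in-time term in the statement should be understood, or obtained, in that integrated form rather than via the $T^{k}_t$ test you sketch.
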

\begin{proof}
The proof of the above inequality is based on the proof of Theorem 4.1 from \cite{ChelminskiOwczarekthermoII}. In this article, we consider nonhomogeneous  Dirichlet boundary condition for the displacement vector, which was not taken into account in \cite{ChelminskiOwczarekthermoII}. As a consequence we get an additional term that needs to be bounded independently of $k>0$. Theorem \ref{tw:4.1} is an essential part of the proof of main result, so we decided to present it in the complete form. Fix $M>0$. Testing the equation \eqref{balancek} by $w=M u_t^{k}$, the equation \eqref{flowrulek} by $\tau=M T^k$ and the equation \eqref{temp3} by $v=\TC_M(\theta^{k})$  we obtain 
\begin{equation}
\label{41}
\begin{split}
M\int_{\Omega} \big(T^k &- f\big(\TC_k(\tilde{\theta}+\theta^k )\big)\id\big) \varepsilon(u_t^{k})\, \di x + M\int_{\Omega}\D(\varepsilon(u^k_{t})) \,\varepsilon(u_t^{k})\,\di x = 0\,,\\[1ex]
M\int_{\Omega}\D^{-1} T^k_{t}  T^k\,\di x&+ M\int_{\Omega}\big\{|\dev(T^k)|-\beta(\theta^k+\tilde{\theta})\big\}^{r}_{+}|\dev(T^k)|\,\di x
\\[1ex]
&+\frac{M}{k}\int_{\Omega}|\dev(T^k)|^{2r}\,\di x
=M\int_{\Omega}\big(\ve(u^k_{t})+\ve(\tilde{u}_t)\big) T^k\,\di x\,,
\\[1ex]
\int_{\Omega}\theta^k_{t}\, \TC_M(\theta^{k})\,\di x  &+ \int_{\Omega}|\nabla\TC_M(\theta^{k})|^2\, \di x
+\int_{\Omega} f\big(\TC_k(\tilde{\theta}+ \theta^k )\big)\mathrm{div} (\tilde{u}_t + u^k_{t})\, \TC_M(\theta^{k}) \,\di x\\[1ex]
=& \int_{\Omega} \TC_k \big(\big\{|\dev(T^k)|-\beta(\theta^k+\tilde{\theta})\big\}^{r}_{+}|\dev(T^k)|  \big)\,\TC_M(\theta^{k})\, \di x\,.
\end{split}
\end{equation}
Integrating \eqref{41} with respect to time and adding all the equations in \eqref{41} we arrive to the following identity 
\begin{equation}
\begin{split}
&M\int_{\Omega}\D^{-1}T^{k}(t)\,T^{k}(t)\di x +M\int_0^t\int_{\Omega}\D\ve(u^{k}_t)\,\ve(u^{k}_t)\,\di x\,\di\tau\\[1ex]
&\hspace{2ex}+M\int_0^t\int_{\Omega}\big\{|\dev(T^k)|-\beta(\theta^k+\tilde{\theta})\big\}^{r}_{+}|\dev(T^k)|\,\di x\,\di\tau \\[1ex] 
&\hspace{4ex} +\frac{M}{k}\int_0^t\int_{\Omega}|\dev(T^k)|^{2r}\,\di x\,\di\tau+\int_{\Omega}\varphi_M(\theta^{k}(t))\,\di x+ \int_0^t\int_{\Omega}|\nabla \TC_M(\theta^{k})|^2\,\di x\,\di\tau\\[1ex]
&= M\int_{\Omega}\D^{-1}T_0\,T_0\,\di x+
\int_{\Omega}\varphi_M(\TC_k(\theta_0))\,\di x+M\int_0^t\int_{\Omega}\ve(\tilde{u}_t)\,T^k\,\di x\,\di\tau\\[1ex]
&\hspace{2ex}+\int_0^t\int_{\Omega} f\big(\TC_k(\tilde{\theta}+ \theta^k )\big)\mathrm{div}\, u^k_{t}\,\big(M-\TC_M(\theta^{k})\big)\,\di x\,\di\tau\\[1ex]
&\hspace{4ex}+\int_0^t\int_{\Omega} f\big(\TC_k(\tilde{\theta}+ \theta^k )\big)\mathrm{div}\, \tilde{u}_t\,\TC_M(\theta^{k})\,\di x\,\di\tau\\[1ex]
&\hspace{6ex}+ \int_0^t\int_{\Omega}\TC_k \big(\big\{|\dev(T^k)|-\beta(\theta^k+\tilde{\theta})\big\}^{r}_{+}|\dev(T^k)|  \big)\,\TC_M(\theta^{k})\,\di x\,\di\tau\,,
\end{split}
\label{42}
\end{equation}
where the function $\varphi_M(\cdot)$ is defined by the formula \eqref{pierwotna}.
Notice that the first initial integral of \eqref{42} is independent on $k>0$. On the other hand, the linear increase of the function $\varphi_M$ at infinity allows us to estimate the second initial integral as follows
\begin{equation}
\label{thetazero}
    \int_{\Omega}\varphi_M(\TC_k(\theta_0))\,\di x\leq C\|\theta_0\|_{L^1(\Omega)}\,
\end{equation}
for every $k>0$, where the constant $C>0$ is independent of $k$. Moreover,
\begin{equation}
\begin{split}
\int_0^t\int_{\Omega}&\TC_k \big(\big\{|\dev(T^k)|-\beta(\theta^k+\tilde{\theta})\big\}^{r}_{+}|\dev(T^k)|  \big)\,\TC_M(\theta^{k})\,\di x\,\di\tau\\[1ex]
&\leq M\int_0^t\int_{\Omega}\big\{|\dev(T^k)|-\beta(\theta^k+\tilde{\theta})\big\}^{r}_{+}|\dev(T^k)|\,\di x\,\di\tau\,.
\end{split}
\label{43}
\end{equation}
Using the H\"older, the Young inequalities and regularity of the given data we conclude that
\begin{equation}
\begin{split}
&M\int_{\Omega}\D^{-1}T^{k}(t)\,T^{k}(t)\di x +M\int_0^t\int_{\Omega}\D\ve(u^{k}_t)\,\ve(u^{k}_t)\,\di x\,\di\tau\\[1ex]
&\hspace{2ex} +\frac{M}{k}\int_0^t\int_{\Omega}|\dev(T^k)|^{2r}\,\di x\,\di\tau+\int_{\Omega}\varphi_M(\theta^{k}(t))\,\di x+ \int_0^t\int_{\Omega}|\nabla \TC_M(\theta^{k})|^2\,\di x\,\di\tau\\[1ex]
&\leq\quad\tilde{C}(\T)+M\int_0^t\int_{\Omega}|T^k|^2\,\di x\,\di\tau+ \nu\int_0^t\int_{\Omega}|\ve(u^{k}_t)|^2\,\di x\,\di\tau\\[1ex]
&\hspace{2ex}+D\int_0^t\int_{\Omega} f^2\big(\TC_k(\tilde{\theta}+ \theta^k )\big)\,\big(M-\TC_M(\theta^{k})\big)^2\,\di x\,\di\tau\\[1ex]
&\hspace{4ex}+\int_0^t\int_{\Omega} f\big(\TC_k(\tilde{\theta}+ \theta^k )\big)\mathrm{div}\, \tilde{u}_t\,\TC_M(\theta^{k})\,\di x\,\di\tau\,,
\end{split}
\label{44}
\end{equation}
where $\nu>0$ is any positive constant and the constants $\tilde{C}(\T)$ and $D$ do not depend on $k>0$. Let us start by estimating the last integral in \eqref{44} that arises from nonhomogeneous Dirichlet boundary condition ($\frac{1}{1-\alpha}>2$). Recalling \eqref{warwzrostu} we have
\begin{equation}
\begin{split}
\int_0^t&\int_{\Omega} f\big(\TC_k(\tilde{\theta}+ \theta^k )\big)\mathrm{div}\, \tilde{u}_t\,\TC_M(\theta^{k})\,\di x\,\di\tau\leq M \int_0^t\int_{\Omega} \big|f\big(\TC_k(\tilde{\theta}+ \theta^k )\big)\big||\mathrm{div}\, \tilde{u}_t|\,\di x\,\di\tau\\[1ex]
&\leq M \int_0^t\int_{\Omega} \big(a+B|\TC_k(\tilde{\theta}+\theta^k)|^{\alpha}\big)|\mathrm{div}\, \tilde{u}_t|\,\di x\,\di\tau\\[1ex]
&\leq M \int_0^t\int_{\Omega} \big(a+B|\tilde{\theta}+\theta^k|^{\alpha}\big)|\mathrm{div}\, \tilde{u}_t|\,\di x\,\di\tau\\[1ex]
& \leq M \int_0^t\Big(\int_{\Omega} \big(a+B|\tilde{\theta}+\theta^k|^{\alpha}\big)^{\frac{1}{\alpha}}\,\di x\Big)^{\alpha}\Big(\int_{\Omega}|\mathrm{div}\, \tilde{u}_t|^{\frac{1}{1-\alpha}}\,\di x\Big)^{1-\alpha}\,\di\tau\\[1ex]
&\leq M \int_0^t\Big(\int_{\Omega} 2^{\frac{1}{\alpha}-1}(a^{\frac{1}{\alpha}}+B^{\frac{1}{\alpha}}|\tilde{\theta}+\theta^k|)\,\di x\Big)^{\alpha}\Big(\int_{\Omega}|\mathrm{div}\, \tilde{u}_t|^{\frac{1}{1-\alpha}}\,\di x\Big)^{1-\alpha}\,\di\tau\\[1ex]
&\leq M \Big(\alpha\int_0^t\int_{\Omega} 2^{\frac{1}{\alpha}-1}(a^{\frac{1}{\alpha}}+B^{\frac{1}{\alpha}}|\tilde{\theta}+\theta^k|)\,\di x\,\di\tau+(1-\alpha)\int_0^t\int_{\Omega}|\mathrm{div}\, \tilde{u}_t|^{\frac{1}{1-\alpha}}\,\di x\,\di\tau\Big)\\[1ex]
&\leq \quad\hat{C}(\T)+M 2^{\frac{1}{\alpha}-1}\alpha\int_0^t\int_{\Omega} |\theta^k|\,\di x\,\di\tau\,,
\end{split}
\label{45}
\end{equation}
where the constants $\hat{C}(\T)$ does not depend on $k>0$. Until the proof is completed, we have one more integral to estimate on the right-hand side of \eqref{44}. This estimate is the same as in \cite{ChelminskiOwczarekthermoII}, but for the convenience of the reader we have decided to present it here. Let us define two sets $Q_1=\{(x,\tau)\in\Omega\times (0,t):\,\theta^{k}(x,\tau)\leq -M\}$ and\\ $Q_2=\{(x,\tau)\in\Omega\times (0,t):\,-M<\theta^{k}(x,\tau)<M\}$. For almost every $(x,\tau)\in\Omega\times (0,t)$ the following inequality is satisfied
\begin{equation}
\label{46}
\begin{split}
\Big(f\big(\TC_{k}(\theta^{k}+\tilde{\theta})\big)\Big)^2\big(M-\TC_M(\theta^{k})\big)^2& \leq 4M^2f^2\big(\TC_{k}(\theta^{k}+\tilde{\theta})\big)\chi_{Q_1}\\[1ex]
&\hspace{2ex}+4M^2f^2\big(\TC_{k}(\theta^{k}+\tilde{\theta})\big)\chi_{Q_2}\,,
\end{split}
\end{equation}
where $\chi_{Q_1}$ and $\chi_{Q_2}$ are the characteristic functions of the sets $Q_1$ and $Q_2$, respectively. The inequality \eqref{46} yields
\begin{equation}
\label{471}
\begin{split}
\int_0^t\int_{\Omega} \Big(f\big(\TC_{k}(\theta^{k}+\tilde{\theta})\big)\Big)^2\big(M-\TC_M(\theta^{k})\big)^2\,\di x\,\di\tau&\leq 4M^2 \int_{Q_1}f\big(\TC_{k}(\theta^{k}+\tilde{\theta})\big)^2\,\di x\,\di \tau\\[1ex]
&+ 4M^2 \int_{Q_2}f\big(\TC_k(\theta^{k}+\tilde{\theta})\big)^2\,\di x\,\di\tau\,.
\end{split}
\end{equation}
The regularity of $\tilde{\theta}$ and the growth condition \eqref{warwzrostu} entail that the second integral on the right-hand side of \eqref{471} is bounded. Let us define the next two sets\\ $Q_1'=\{(x,\tau)\in Q_1:\,\theta^{k}(x,\tau)+\tilde{\theta}(x,\tau)\geq 0\}$ and 
$Q_1''=\{(x,\tau)\in Q_1:\,\theta^{k}(x,\tau)+\tilde{\theta}(x,\tau) < 0\}$. Therefore the first term on the right-hand side of \eqref{471} is estimated as follow
\begin{equation}
\label{48}
\int_{Q_1} f\big(\TC_{k}(\theta^{k}+\tilde{\theta})\big)^2\,\di x\,\di\tau= \int_{Q_1'} f\big(\TC_{k}(\theta^{k}+\tilde{\theta})\big)^2\,\di x\,\di \tau+\int_{Q_1''} f\big(\TC_{k}(\theta^{k}+\tilde{\theta})\big)^2\,\di x\,\di\tau\,.
\end{equation}
Again the regularity of $\tilde{\theta}$ and the growth conditions on $f$ yield that the first integral on the right-hand side of \eqref{48} is bounded. Using the assumption \eqref{warwzrostu1} we obtain
\begin{equation}
\label{49}
\begin{split}
4M^2\int_{Q_1''} f\big(\TC_{k}(\theta^{k}+\tilde{\theta})\big)^2\,\di x\,\di t&\leq 4CM^2\int_{Q_1''}(1+|\theta^{k}+\tilde{\theta}|)\,\di x\,\di\tau\\[1ex]
&\leq D+4CM^2\int_{Q_1''}|\theta^{k}|\,\di x\,\di\tau\\[1ex]
&=D+4CM\int_{Q_1''}(\varphi_M(\theta^{k})+\frac{1}{2}M^2)\,\di x\,\di\tau\,.
\end{split}
\end{equation}
 Notice that for $r\in\R$
\begin{displaymath}
\varphi_K(r) \geq \left\{ \begin{array}{ll}
\frac{1}{2}|r|^2 & \textrm{if}\quad |r|\leq M\,,\\[1ex]
\frac{1}{2}M|r| & \textrm{if}\quad |r|>M\,.\\
\end{array} \right.
\end{displaymath}
Applying inequalities \eqref{45}- \eqref{49} in \eqref{44}, then selecting the appropriate small constant $\nu>0$ and Gronwall's inequality, we complete the proof.
\end{proof}
\noindent
Following the ideas of \cite{BoccardoGallouet,GKS15,ChelminskiOwczarekthermoII,barowcz2}, we are going to use Boccardo's and Gallou{\"e}t's approach to the heat equation occurring in the system \eqref{AMain1}. Then we need the boundedness of the right-hand side of heat equation in $L^1(0,\T;L^1(\Omega))$. We will receive this information in the following lemma.
\begin{lem}
\label{tw:4.7}
The sequences $\Big\{\big\{|\dev(T^k)|-\beta(\theta^k+\tilde{\theta})\big\}^{r}_{+}|\dev(T^k)|  \Big\}_{k>0}$ and\\
$\big\{\frac{1}{k}|\dev(T^k)|^{2r}\big\}_{k>0}$ are uniformly bounded in the space  $L^{1}(0,\T;L^1(\Omega))$.
\end{lem}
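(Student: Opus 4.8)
The plan is to read the bound off directly from the flow rule \eqref{flowrulek} by testing it with the stress itself, using only the $k$-uniform estimates of Theorem \ref{tw:4.1}. No information on the heat equation is needed at this stage; in fact the Boccardo--Gallou\"et estimate for $\theta^k$ comes afterwards and relies on the present lemma, so the estimate below must be self-contained.

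First I would fix $t$ in a full-measure subset of $(0,\T)$ and insert $\tau=T^k(t)$ into \eqref{flowrulek}. This is legitimate: $T^k(t)\in L^2(\Omega;\S)$ and, by Theorem \ref{tw:4.1}, $\frac1k\|\dev(T^k(t))\|^{2r}_{L^{2r}(\Omega)}\le C(\T)$, whence $\dev(T^k(t))\in L^{2r}(\Omega;\SS)$, so $T^k(t)$ is an admissible test vector for a.a.\ $t$. Using that $\dev(T^k)$ is orthogonal (in the Frobenius product) to the volumetric part $\tfrac13\mathrm{tr}(T^k)\id$, so that $\frac{\dev(T^k)}{|\dev(T^k)|}\cdot T^k=|\dev(T^k)|$, and that $\D^{-1}$ is self-adjoint and positive definite, so that $\D^{-1}T^k_t\cdot T^k=\tfrac12\frac{\di}{\di t}\big(\D^{-1}T^k\cdot T^k\big)$, one arrives at the identity
\[
\frac12\frac{\di}{\di t}\int_\Omega \D^{-1}T^k\,T^k\,\di x+\int_\Omega\big\{|\dev(T^k)|-\beta(\theta^k+\tilde\theta)\big\}^r_+|\dev(T^k)|\,\di x+\frac1k\int_\Omega|\dev(T^k)|^{2r}\,\di x=\int_\Omega\big(\ve(u^k_t)+\ve(\tilde u_t)\big)T^k\,\di x ,
\]
valid for a.a.\ $t\in(0,\T)$; here $T^k\in H^1(0,\T;L^2(\Omega;\S))$ by \eqref{weaklimm}, so the left-hand side is meaningful and $T^k(0)=T_0$.

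Then I would integrate over $(0,t)$. The term $\tfrac12\int_\Omega\D^{-1}T^k(t)T^k(t)\,\di x$ is nonnegative and is discarded, while $\tfrac12\int_\Omega\D^{-1}T_0T_0\,\di x$ is finite by \eqref{regularity}. The right-hand side is estimated by Cauchy--Schwarz,
\[
\int_0^t\!\!\int_\Omega\big(\ve(u^k_t)+\ve(\tilde u_t)\big)T^k\,\di x\,\di\tau\le\big(\|\ve(u^k_t)\|_{L^2(0,\T;L^2)}+\|\ve(\tilde u_t)\|_{L^2(0,\T;L^2)}\big)\|T^k\|_{L^2(0,\T;L^2)},
\]
and all three norms are bounded independently of $k$: $\|\ve(u^k_t)\|_{L^2(0,\T;L^2)}\le C$ and $\|T^k\|_{L^2(0,\T;L^2)}\le\sqrt{\T}\,\|T^k\|_{L^\infty(0,\T;L^2)}\le C$ by Theorem \ref{tw:4.1}, while $\ve(\tilde u_t)\in L^{\frac1{1-\alpha}}(0,\T;L^{\frac1{1-\alpha}}(\Omega;\S))\hookrightarrow L^2(0,\T;L^2(\Omega;\S))$ since $\frac1{1-\alpha}>2$ and $\Omega\times(0,\T)$ has finite measure, the last norm being controlled by the data. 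This gives
\[
\int_0^\T\!\!\int_\Omega\big\{|\dev(T^k)|-\beta(\theta^k+\tilde\theta)\big\}^r_+|\dev(T^k)|\,\di x\,\di t+\frac1k\int_0^\T\!\!\int_\Omega|\dev(T^k)|^{2r}\,\di x\,\di t\le C
\]
with $C$ independent of $k$, which is exactly the assertion; alternatively the bound for $\frac1k|\dev(T^k)|^{2r}$ by itself follows at once by integrating in time the pointwise-in-$t$ estimate $\frac1k\|\dev(T^k(t))\|^{2r}_{L^{2r}(\Omega)}\le C(\T)$ of Theorem \ref{tw:4.1}.

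There is no serious obstacle. The only point worth stressing is that this identity must be used \emph{on its own} and not added to the tested heat equation: in the proof of Theorem \ref{tw:4.1} the dissipation integral $\int_0^t\int_\Omega\{|\dev(T^k)|-\beta\}^r_+|\dev(T^k)|$ was absorbed against the truncated heat source via \eqref{43} and hence disappeared, whereas here it is kept on the left and paired only against $\ve(u^k_t)$ and $\ve(\tilde u_t)$, which Theorem \ref{tw:4.1} and \eqref{regularity} already control in $L^2(\Omega\times(0,\T))$ uniformly in $k$. The care needed is merely the routine verification that $T^k(t)$ is an admissible test vector in \eqref{flowrulek} for a.a.\ $t$ and that $t\mapsto\int_\Omega\D^{-1}T^k T^k\,\di x$ is absolutely continuous, both guaranteed by the regularity of the $k$-level solution recorded in \eqref{weaklimm}.
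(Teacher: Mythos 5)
Your proof is correct and follows essentially the same route as the paper: test the flow rule \eqref{flowrulek} with $\tau=T^k$, integrate in time, drop (or keep as nonnegative) the term $\tfrac12\int_\Omega\D^{-1}T^k(t)\,T^k(t)\,\di x$, and bound the right-hand side via Cauchy--Schwarz using the $k$-uniform estimates of Theorem \ref{tw:4.1} together with the $L^{\frac{1}{1-\alpha}}\hookrightarrow L^2$ regularity of $\ve(\tilde u_t)$. The only differences are cosmetic (your explicit admissibility check for the test function and the remark that the $\frac1k|\dev(T^k)|^{2r}$ bound already follows from Theorem \ref{tw:4.1} alone), so nothing further is needed.
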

\begin{proof}
Putting $\tau=T^k$ in \eqref{flowrulek} we have 
\begin{equation}
\begin{aligned}
\int_{\Omega}&\D^{-1} T^k_{t}\,T^k\,\di x + \int_{\Omega}\big\{|\dev(T^k)|-\beta(\theta^k+\tilde{\theta})\big\}^{r}_{+}\,|\dev(T^k)|\,\di x
\\[1ex]
&+\frac{1}{k}\int_{\Omega}|\dev(T^k)|^{2r}\,\,\di x
=\int_{\Omega}\big(\ve(u^k_{t})+\ve(\tilde{u}_t)\big)T^k\,\di x\,.
\end{aligned}
\label{flowrulek1}
\end{equation}
Integrating with respect to time we get 
\begin{equation}
\begin{aligned}
&\int_0^t\int_{\Omega}\big\{|\dev(T^k)|-\beta(\theta^k+\tilde{\theta})\big\}^{r}_{+}\,|\dev(T^k)|\,\di x\,\di\tau
+\frac{1}{k}\int_0^t\int_{\Omega}|\dev(T^k)|^{2r}\,\di x\,\di\tau\\[1ex]
&=\frac{1}{2}\int_{\Omega}\D^{-1} T_0\,T_0\,\di x-\frac{1}{2}\int_{\Omega}\D^{-1} T^k\,T^k\,\di x
+\int_0^t\int_{\Omega}\big(\ve(u^k_{t})+\ve(\tilde{u}_t)\big)T^k\,\di x\,\di\tau\,.
\end{aligned}
\label{flowrulek2}
\end{equation}
The assumptions on the initial data \eqref{regularity} imply that the first integral on the right hand side of \eqref{flowrulek2} is bounded. The second one is nonpositive. Theorem \ref{tw:4.1} yields that the sequences $\{u^{k}_t\}_{k>0}$ and $\{T^{k}\}_{k>0}$ are bounded in the space $L^2(0,\T;H^1_0(\Omega;\R^3))$ and $L^{\infty}(0,\T;L^2(\Omega;\S))$, respectively,  which completes the proof.
\end{proof}
\noindent
Theorem \ref{tw:4.1} and Lemma \ref{tw:4.7} imply that the sequences\\  $\Big\{\TC_k\big(\big\{|\dev(T^k)|-\beta(\theta^k+\tilde{\theta})\big\}^{r}_{+}|\dev(T^k)|\Big) \Big\}_{k>0}$ and $\{\mathrm{div}\,u^{k}_t\}_{k>0}$ are bounded in $L^1(0,\T;L^{1}(\Omega))$ and $L^2(0,\T;L^2(\Omega))$, respectively. This information allows us to use Boccardo's and Gallou{\"e}t's approach and obtain the following lemma
\begin{lem}
\label{lem:4.2}
The sequence $\{\theta^{k}\}_{k>0}$ is uniformly bounded in the space $L^{q}(0,\T;W^{1,q}(\Omega))$ for all $1<q<\frac{5}{4}$.
\end{lem}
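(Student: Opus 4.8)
The plan is to treat Lemma~\ref{lem:4.2} as a parabolic Boccardo--Gallou\"et estimate for the heat equation with $L^1$ data, cf.\ \cite{BoccardoGallouet}; the one genuine difficulty is that the term $f\big(\TC_k(\tilde\theta+\theta^k)\big)\,\dyw(\tilde u_t+u^k_t)$ sits in $L^1$ only, and its $L^1$-norm is a priori controlled solely in terms of a Lebesgue norm of $\theta^k$ that we are trying to establish. So I would first run a short bootstrap to obtain a uniform-in-$k$ bound for the whole right-hand side in $L^1(0,\T;L^1(\Omega))$, and then feed this into the standard truncation machinery.

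First I would rewrite the weak heat equation \eqref{temp3}, recalling that after the reduction of Section~2 the temperature carries a homogeneous Neumann condition (see \eqref{eq:Ini-Bond}), in the form $\theta^k_t-\Delta\theta^k=g^k$ with
\[
g^k:=\TC_k\big(\big\{|\dev(T^k)|-\beta(\theta^k+\tilde\theta)\big\}^{r}_{+}|\dev(T^k)|\big)-f\big(\TC_k(\tilde\theta+\theta^k)\big)\,\dyw(\tilde u_t+u^k_t).
\]
Lemma~\ref{tw:4.7} bounds the first summand in $L^1(\Omega\times(0,\T))$ uniformly in $k$, and Theorem~\ref{tw:4.1} together with the regularity \eqref{regularity} of $\tilde u$ bounds $\dyw(\tilde u_t+u^k_t)$ in $L^2(\Omega\times(0,\T))$ uniformly in $k$; moreover \eqref{warwzrostu}--\eqref{warwzrostu1} give $|f(\TC_k(s))|\le C(1+|s|^{\alpha})$ for all $s\in\R$ (the $\tfrac12$-growth for $s<0$ being dominated by the $\alpha$-growth since $\alpha>\tfrac12$), whence $\|g^k\|_{L^1(Q)}\le C\big(1+\|\theta^k\|_{L^{2\alpha}(Q)}^{\alpha}\big)$ with $Q=\Omega\times(0,\T)$.

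Next comes the bootstrap. I would test \eqref{temp3} with $\TC_j(\theta^k)$ for $j\ge1$ (admissible since for fixed $k$ one has $\theta^k\in L^{\infty}(0,\T;H^1(\Omega))\cap H^1(0,\T;L^2(\Omega))$) and use $\int_0^t\!\int_\Omega\theta^k_t\,\TC_j(\theta^k)\,\di x\,\di\tau=\int_\Omega\varphi_j(\theta^k(t))\,\di x-\int_\Omega\varphi_j(\TC_k(\theta_0))\,\di x$, together with $|\TC_j(r)|\le j$, $\varphi_j(r)\le j|r|$ and $\varphi_j(r)\ge\tfrac12|\TC_j(r)|^2$, to get
\[
\sup_{t\in(0,\T)}\int_\Omega\varphi_j(\theta^k(t))\,\di x+\int_0^\T\!\!\int_\Omega|\nabla\TC_j(\theta^k)|^2\,\di x\,\di t\le j\big(\|\theta_0\|_{L^1(\Omega)}+\|g^k\|_{L^1(Q)}\big).
\]
By the parabolic interpolation inequality $L^{\infty}_tL^2_x\cap L^2_tH^1_x\hookrightarrow L^{10/3}(Q)$ this gives $\|\TC_j(\theta^k)\|_{L^{10/3}(Q)}^{2}\le Cj\,(1+\|g^k\|_{L^1(Q)})$, hence, since $\TC_j(\theta^k)=\pm j$ on $\{|\theta^k|\ge j\}$, the uniform weak-$L^{5/3}(Q)$ bound $|\{|\theta^k|\ge j\}|\le C\,(1+\|g^k\|_{L^1(Q)})^{5/3}j^{-5/3}$, and therefore $\|\theta^k\|_{L^{\rho}(Q)}\le C\,(1+\|g^k\|_{L^1(Q)})^{5/(3\rho)}$ for every $\rho<5/3$. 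Because $\alpha<\tfrac56$ I may fix $\rho$ with $2\alpha<\rho<\tfrac53$; then $L^{\rho}(Q)\hookrightarrow L^{2\alpha}(Q)$ and, chaining the last estimate with $\|g^k\|_{L^1(Q)}\le C(1+\|\theta^k\|_{L^{2\alpha}(Q)}^{\alpha})$,
\[
\|\theta^k\|_{L^{\rho}(Q)}\le C\Big(1+\|\theta^k\|_{L^{\rho}(Q)}^{\,5\alpha/(3\rho)}\Big),\qquad \frac{5\alpha}{3\rho}<\frac{5\alpha}{3\cdot2\alpha}=\frac56<1,
\]
so this self-improving inequality forces $\|\theta^k\|_{L^{\rho}(Q)}\le C$, and then $\|g^k\|_{L^1(Q)}\le C$, both uniformly in $k$.

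With the uniform $L^1(Q)$-bound on $g^k$ in hand I would close the argument in the classical Boccardo--Gallou\"et way: the $\TC_j$-testing now yields $\|\nabla\TC_j(\theta^k)\|_{L^2(Q)}^{2}+\sup_t\|\TC_j(\theta^k(t))\|_{L^2(\Omega)}^{2}\le Cj$ and $|\{|\theta^k|\ge j\}|\le Cj^{-5/3}$ uniformly in $k$; then, for $\lambda>0$,
\[
\big|\{|\nabla\theta^k|\ge\lambda\}\big|\le\big|\{|\nabla\TC_j(\theta^k)|\ge\lambda\}\big|+\big|\{|\theta^k|\ge j\}\big|\le C\big(j\lambda^{-2}+j^{-5/3}\big),
\]
and optimising over $j$ (the choice $j\sim\lambda^{3/4}$) gives $|\{|\nabla\theta^k|\ge\lambda\}|\le C\lambda^{-5/4}$, i.e.\ $\{\nabla\theta^k\}_{k>0}$ is bounded in the Marcinkiewicz space $L^{5/4,\infty}(Q)$, hence in $L^q(Q)$ for every $q<\tfrac54$; together with the uniform $L^{\rho}(Q)$-bound on $\theta^k$ itself (take $q<\rho<\tfrac53$) this yields the claim. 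The main obstacle is precisely the bootstrap step above; this is where the restriction $\alpha\in(\tfrac12,\tfrac56)$ is indispensable, since it keeps $2\alpha$ strictly below the parabolic critical exponent $\tfrac{N+2}{N}=\tfrac53$ in $\R^3$ and makes the resulting self-map a contraction-type map with exponent $<1$.
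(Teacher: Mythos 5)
Your argument is correct, and it follows the Boccardo--Gallou\"et route that the paper itself invokes: the paper omits the proof of Lemma \ref{lem:4.2} altogether, referring to \cite{ChelminskiOwczarekthermoII,barowcz2} after recording exactly the two inputs you use (the $L^1$ bound of Lemma \ref{tw:4.7} and the $L^2$ bound on $\dyw u^k_t$ coming from Theorem \ref{tw:4.1}). The point where you supply something the present text glosses over is the coupling term $f\big(\TC_k(\tilde\theta+\theta^k)\big)\dyw(\tilde u_t+u^k_t)$: its $L^1(Q)$ norm is only controlled through $\|\theta^k\|_{L^{2\alpha}(Q)}$, and in the paper the $L^2$ bound on $f\big(\TC_k(\theta^k+\tilde\theta)\big)$ is derived only afterwards, in Remark \ref{col:4.5}, by interpolating between the $L^\infty(0,\T;L^1(\Omega))$ bound of Theorem \ref{tw:4.1} and the $L^q(W^{1,q})$ bound of Lemma \ref{lem:4.2} itself; in the cited works this apparent circularity is closed inside the proof by the same interpolation-and-absorption mechanism, with $\alpha<\tfrac{5}{6}$ making the relevant exponent sublinear. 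Your bootstrap performs precisely this closure, just organized through the single truncations $\TC_j$, the parabolic embedding $L^\infty_tL^2_x\cap L^2_tH^1_x\hookrightarrow L^{10/3}(Q)$ and Marcinkiewicz-type level-set bounds (weak $L^{5/3}$ for $\theta^k$, weak $L^{5/4}$ for $\nabla\theta^k$), instead of the classical summation over unit slices $\TC_1(\theta^k-\TC_j(\theta^k))$ with interpolation against the $W^{1,q}$ norm used in \cite{ChelminskiOwczarekthermoII,barowcz2}; both variants are standard and hinge on the same restriction $\alpha\in(\tfrac{1}{2},\tfrac{5}{6})$, i.e.\ $2\alpha<\tfrac{N+2}{N}=\tfrac{5}{3}$ for $N=3$. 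The quantitative steps check out: $\varphi_j(r)\ge\tfrac12|\TC_j(r)|^2$, the bound $\|\TC_j(\theta^k)\|^2_{L^{10/3}(Q)}\le Cj\big(1+\|g^k\|_{L^1(Q)}\big)$, the consequence $\|\theta^k\|_{L^{\rho}(Q)}\le C\big(1+\|g^k\|_{L^1(Q)}\big)^{5/(3\rho)}$ for $\rho<\tfrac{5}{3}$, the self-map exponent $5\alpha/(3\rho)<1$, and the optimization $j\sim\lambda^{3/4}$ are all sound; and the absorption step is legitimate because for each fixed $k$ one has $\theta^k\in L^\infty(0,\T;H^1(\Omega))$, so the norms being absorbed are finite.
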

\noindent
In the theory of inelastic deformations taking into account heat flow, it can be said that the Lemma \ref{lem:4.2} has become a standard. The original idea came from Boccardo's and Gallou{\"e}t from \cite{BoccardoGallouet}. The complete proof of the Lemma \ref{lem:4.2} can be found in \cite{ChelminskiOwczarekthermoII} and \cite{barowcz2}, so we decided to omit it (see also \cite{GKS15} and \cite{RoubicekL1}, where this approach has been used to similar problems).
\begin{remark}
\label{col:4.5}
The growth condition \eqref{warwzrostu} yields
\begin{equation}
\label{eq:47}
\int\limits _0^\T\int\limits _{\Omega}|f\big(\TC_{k}(\theta^{k}+\tilde{\theta})\big)|^2\,\di x\,\di\tau\leq A+\tilde{M} \int\limits _0^\T\int\limits _{\Omega}|\theta^{k}|^{2\alpha}\,\di x\,\di\tau\,,
\end{equation}
where the constants $A$ and $\tilde{M}$ do not depend on $k>0$. Let us assume that $\frac{4}{3}\leq 2\alpha<\frac{5}{3}$ and $2\alpha=\frac{4}{3}q$. Using the interpolation inequality we obtain  
\begin{equation}
\label{eq:48}
\|\theta^{k}(t)\|_{L^{2\alpha}(\Omega)} \leq \|\theta^{k}(t)\|^{s_1}_{L^{1}(\Omega)}\|\theta^{k}(t)\|^{1-s_1}_{L^{q^{\ast}}(\Omega)}
\end{equation} 
for almost every $t\leq \T$, where $q^{\ast}=\frac{3q}{3-q}$ and $\frac{1}{2\alpha}=\frac{s_1}{1}+\frac{1-s_1}{q^{\ast}}$. Using inequality \eqref{eq:48} in \eqref{eq:47},  applying Theorem \ref{tw:4.1} and the Sobolev embedding theorem we deduce that the sequence $\{f\big(\TC_{k}(\theta^{k}+\tilde{\theta})\big)\}_{k>0}$ is bounded in $L^2(0,\T;L^2(\Omega;\R))$. For $1<2\alpha<\frac{4}{3}$ the last statement is also correct since the following inequality is true
\begin{equation}
\label{eq:411}
\|\theta^{\lambda}(t)\|_{L^{2\alpha}(\Omega)} \leq D_5\|\theta^{\lambda}(t)\|_{L^{\frac{4}{3}}(\Omega)}
\end{equation} 
for almost every $t\leq T$. Summarizing the above statements we get that the sequence 
\begin{equation}
\label{eq:412}
\Big\{f\big(\TC_{k}(\theta^{k}+\tilde{\theta})\big) \mathrm{div}\,u^{k}_t + \TC_k\Big(\big\{|\dev(T^k)|-\beta(\theta^k+\tilde{\theta})\big\}^{r}_{+}|\dev(T^k)|\Big) \Big\}_{\lambda>0}
\end{equation}
is bounded in $L^1(0,\T;L^1(\Omega))$. It follows that the sequence \eqref{eq:412} is bounded in\\ $L^1\big(0,\T;\big(W^{1,q'}(\Omega)\big)^{\ast}\big)$, where the space $\big(W^{1,q'}(\Omega)\big)^{\ast}$ is the space of all linear bounded functionals on $W^{1,q'}(\Omega)$ $(\frac{1}{q}+\frac{1}{q'}=1)$. This information entail that the sequence $\{\theta^{k}_t\}_{\lambda>0}$ is bounded in $L^1\big(0,\T;\big(W^{1,q'}(\Omega)\big)^{\ast}\big)$. Using the compactness Aubin-Lions lemma we derive that the sequence  $\{\theta^{k}\}_{k>0}$ is relatively compact in $L^1(0,\T;L^1(\Omega))$. It contains a subsequence (again denoted using the superscript $k$) such that $\theta^{k}\rightarrow \theta$ a.e. in $\Omega\times(0,\T)$. 
The continuity of $f$ entails that
$$f\big(\TC_{k}(\theta^{k}+\tilde{\theta})\big)\rightarrow f(\theta+\tilde{\theta})\quad \mathrm{a.e.\,\, in}\quad \Omega\times(0,\T)\,.$$
Additionally, from Lemma \ref{lem:4.2} we know that the sequence $\{\theta^{k}\}_{k>0}$ is bounded in the space $L^{p}(0,\T;L^{p}(\Omega))$ for $1\leq p<\frac{5}{3}$. Let us select $r\in\R$ such that $2\alpha<r<\frac{5}{3}$, hence the growth condition on $f$ yields that the sequence $\left\{f\left(\TC_{k}(\theta^{k}+\tilde{\theta})\right)\right\}_{k>0}$ is bounded in $L^{\frac{r}{\alpha}}(\Omega\times(0,\T))$. Perceiving that $\frac{r}{\alpha}>2$, we finally deduce from equi-integrability one of the most important pieces of information (in the proof of main result)
$$
f\big(\TC_{k}(\theta^{k}+\tilde{\theta})\big)\rightarrow f(\theta+\tilde{\theta})\quad \mathrm{in}\quad L^2(0,\T;L^2(\Omega))\,.\\[1ex]
$$
\end{remark}
\noindent
In the next part of this section we intend to address the boundedness of nonlinearities associated with the inelastic constitutive equation $\eqref{AMain1}_2$.
\begin{remark}
\label{col:4.6}
Let us examine the following integral
\begin{equation}
\label{412}
\begin{split}
\int_0^t&\int_{\Omega}\Big|\big\{|\dev(T^k)|-\beta(\theta^k+\tilde{\theta})\big\}^{r}_{+}\,\frac{\dev(T^k)}{|\dev(T^k)|}\Big|^{\frac{r+1}{r}}\,\di x\, \di \tau\\[1ex]
&= \int_0^t\int_{\Omega}\big\{|\dev(T^k)|-\beta(\theta^k+\tilde{\theta})\big\}^{r+1}_{+}\,\di x\, \di\tau
\end{split}
\end{equation}
for $t\leq\T$. Observe that for almost every $(x,\tau)\in \Omega\times (0,t)$ such that $$|\dev(T^k(x,\tau))|\leq\beta(\theta^k(x,\tau)+\tilde{\theta}(x,\tau))\,,$$
the integral on the right hand side of \eqref{412} is equal to 0. Let us denote by 
$$Q_1=\{(x,\tau)\in\Omega\times (0,t):\,|\dev(T^k(x,\tau))|>\beta(\theta^k(x,\tau)+\tilde{\theta}(x,\tau))\}\,,$$
then  
\begin{equation}
\label{413}
\begin{split}
\int_0^t&\int_{\Omega}\big\{|\dev(T^k)|-\beta(\theta^k+\tilde{\theta})\big\}^{r+1}_{+}\,\di x\, \di\tau\\[1ex]
&=\int_{Q_1}\big\{|\dev(T^k)|-\beta(\theta^k+\tilde{\theta})\big\}^{r}_{+}\big(|\dev(T^k)|-\beta(\theta^k+\tilde{\theta})\big)\,\di x\, \di\tau\\[1ex]
&=\int_{Q_1}\big\{|\dev(T^k)|-\beta(\theta^k+\tilde{\theta})\big\}^{r}_{+}|\dev(T^k)|\,\di x\, \di\tau\\[1ex]
&\hspace{2ex}-
\int_{Q_1}\big\{|\dev(T^k)|-\beta(\theta^k+\tilde{\theta})\big\}^{r}_{+}\beta(\theta^k+\tilde{\theta})\,\di x\, \di\tau
\end{split}
\end{equation}
Last term on the right-hand side of \eqref{413} in non-positive (assumption (C2)) and the Lemma \ref{tw:4.7} implies that the sequence $\Big\{\big\{|\dev(T^k)|-\beta(\theta^k+\tilde{\theta})\big\}^{r}_{+}\,\frac{\dev(T^k)}{|\dev(T^k)|}\Big\}_{k>0}$  is bounded in $L^{\frac{r+1}{r}}(0,\T;L^{\frac{r+1}{r}}(\Omega;\SS))$. Additionally 
\begin{equation}
\label{414}
\frac{1}{k}\int_0^t\int_{\Omega}\Big||\dev(T^k)|^{2r-1}\,\frac{\dev(T^k)}{|\dev(T^k)|}\Big|^{\frac{2r}{2r-1}}\,\di x\, \di \tau= \frac{1}{k}\int_0^t\int_{\Omega}|\dev(T^k)|^{2r}\,\di x\, \di \tau\,,
\end{equation}
therefore using again Lemma \ref{tw:4.7} we obtain that the sequence 
\begin{equation}
\label{415}
    \Big(\frac{1}{k}\Big)^{\frac{2r-1}{2r}}\Big\||\dev(T^k)|^{2r-1}\,\frac{\dev(T^k)}{|\dev(T^k)|}\Big\|_{L^{\frac{2r}{2r-1}}(0,\T;L^{\frac{2r}{2r-1}}(\Omega))}
\end{equation}
is bounded independently on $k>0$.
\end{remark}
\noindent
To pass to the limit in equations \eqref{temp3}, \eqref{balancek} and \eqref{flowrulek}, we also need a boundedness of the sequences $\{\dev(T^k)\}_{k>0}$ and $T^k_t$.
\begin{remark}
\label{col:4.7}
Let us introduce the $Q_2= \Omega\times(0,t)\setminus Q_1$ for $t\in (0,\T)$, where $Q_1$ is defined in Corollary \ref{col:4.6}. Then 
\begin{equation}
\label{416}
\begin{split}
\int_0^t&\int_{\Omega}|\dev(T^k)|^{r+1}\,\di x\, \di\tau= \int_0^t\int_{\Omega}|\dev(T^k)|^{r}|\dev(T^k)|\,\di x\, \di\tau\\[1ex]
&=\int_{Q_1}|\dev(T^k)|^{r}|\dev(T^k)|\,\di x\, \di\tau+ \int_{Q_2}|\dev(T^k)|^{r}|\dev(T^k)|\,\di x\, \di\tau\\[1ex]
&=\int_{Q_1}\big(|\dev(T^k)|-\beta(\theta^k+\tilde{\theta})+\beta(\theta^k+\tilde{\theta})\big)^{r}|\dev(T^k)|\,\di x\, \di\tau\\[1ex]
&\hspace{2ex}+\int_{Q_2}|\dev(T^k)|^{r}|\dev(T^k)|\,\di x\, \di\tau\\[1ex]
&=2^{r-1}\int_{Q_1}\big(|\dev(T^k)|-\beta(\theta^k+\tilde{\theta}))\big)^{r}|\dev(T^k)|\,\di x\, \di\tau\\[1ex]
&\hspace{2ex}+ 2^{r-1}\int_{Q_1}\beta^r(\theta^k+\tilde{\theta})|\dev(T^k)|\,\di x\, \di\tau
+\int_{Q_2}|\dev(T^k)|^{r}|\dev(T^k)|\,\di x\, \di\tau\,.
\end{split}
\end{equation}
Lemma \ref{tw:4.7} implies that the first integral on the right-hand side of \eqref{416} is bounded independently of $k>0$. Theorem \ref{tw:4.1} yields that the sequence $\{T^k\}_{k>0}$ is bounded in\\ $L^\infty(0,\T;L^2(\Omega;\S))$, therefore from the assumption (C2) the penultimate integral in \eqref{416} is bounded. On the set $Q_2$ the deviatoric part of $T^k$ is in $L^\infty(0,\T;L^\infty(\Omega;\SS))$, hence the last integral in \eqref{416} is also bounded. Summarizing we got that the sequence $\{\dev(T^k)\}_{k>0}$ is bounded in the space $L^{r+1}(0,\T;L^{r+1}(\Omega;\SS))$.
\end{remark}
\begin{lem}
\label{lem47} The sequence $T^k_t$ is bounded in $ L^{\frac{2r}{2r-1}}(0,\T;L^{\frac{2r}{2r-1}}(\Omega;\S))$.
\end{lem}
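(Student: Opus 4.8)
The plan is to read the bound off the inelastic constitutive relation \eqref{flowrulek}. Since that identity holds for every admissible test tensor $\tau$ and almost all $t$, it holds pointwise a.e.\ in $\Omega\times(0,\T)$, so that
\begin{equation*}
\D^{-1}T^k_t=\ve(u^k_t)+\ve(\tilde u_t)-\big\{|\dev(T^k)|-\beta(\theta^k+\tilde\theta)\big\}^r_+\,\frac{\dev(T^k)}{|\dev(T^k)|}-\frac1k\,|\dev(T^k)|^{2r-1}\,\frac{\dev(T^k)}{|\dev(T^k)|}\,.
\end{equation*}
Because $\D$ has constant coefficients, $\D(\cdot)$ is a bounded linear operator on $L^p(\Omega\times(0,\T);\S)$ for every $p$, so it suffices to bound each of the four terms on the right-hand side in $L^{\frac{2r}{2r-1}}(0,\T;L^{\frac{2r}{2r-1}}(\Omega;\S))$ uniformly in $k$; moreover it is enough to do this for $k\ge1$, since the lemma is only used for the subsequent passage to the limit $k\to\infty$.

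First I would dispose of the two linear terms. By Theorem \ref{tw:4.1} the sequence $\{\ve(u^k_t)\}_{k>0}$ is bounded in $L^2(0,\T;L^2(\Omega;\S))$, and by \eqref{regularity} together with the regularity of $\tilde u$ recorded after \eqref{war_brz_u} we have $\ve(\tilde u_t)\in L^{\frac1{1-\alpha}}(0,\T;L^{\frac1{1-\alpha}}(\Omega;\S))$ with $\frac1{1-\alpha}>2$. Since $r>1$ forces $\frac{2r}{2r-1}<2$ and $\Omega\times(0,\T)$ has finite measure, both sequences are bounded in the target space by Hölder's inequality (the embedding of Lebesgue spaces on a finite-measure set).

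For the plasticity term, Remark \ref{col:4.6} (cf.\ \eqref{412}--\eqref{413}) already provides a uniform bound in $L^{\frac{r+1}{r}}(0,\T;L^{\frac{r+1}{r}}(\Omega;\SS))$; since $(r+1)(2r-1)\ge 2r^2$ for $r\ge1$, i.e.\ $\frac{r+1}{r}\ge\frac{2r}{2r-1}$, the same finite-measure embedding transfers this to $L^{\frac{2r}{2r-1}}(0,\T;L^{\frac{2r}{2r-1}}(\Omega;\SS))$. Finally, for the regularising term one computes
\begin{equation*}
\Big\|\tfrac1k\,|\dev(T^k)|^{2r-1}\,\tfrac{\dev(T^k)}{|\dev(T^k)|}\Big\|_{L^{\frac{2r}{2r-1}}(0,\T;L^{\frac{2r}{2r-1}}(\Omega))}^{\frac{2r}{2r-1}}=\frac1{k^{\frac1{2r-1}}}\cdot\frac1k\int_0^\T\!\!\int_\Omega|\dev(T^k)|^{2r}\,\di x\,\di t\,,
\end{equation*}
and Lemma \ref{tw:4.7} bounds $\frac1k\int_0^\T\int_\Omega|\dev(T^k)|^{2r}$ uniformly in $k$, while $k^{-1/(2r-1)}\le1$ for $k\ge1$. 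Summing the four bounds and applying $\D$ yields the assertion; note that it is precisely this regularising term that dictates the exponent $\frac{2r}{2r-1}$ (strictly smaller than $\frac{r+1}{r}$) in the statement.

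There is no serious obstacle here: the entire argument is bookkeeping of Lebesgue exponents — one must check that $\frac{2r}{2r-1}$ lies below each of $2$, $\frac1{1-\alpha}$ and $\frac{r+1}{r}$ when $r>1$ — together with the observation that the factor $k^{-1/(2r-1)}$ appearing in the regularising term is harmless once attention is restricted to $k\ge1$, which is legitimate since $k$ will be sent to infinity.
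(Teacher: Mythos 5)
Your proposal is correct and in substance coincides with the paper's proof: the paper tests \eqref{flowrulek} with an arbitrary $\varphi\in L^{2r}(0,\T;L^{2r}(\Omega;\S))$ and bounds the resulting functional by exactly the three estimates you invoke (Remark \ref{col:4.6} for the plastic term, the uniform bound of Lemma \ref{tw:4.7} for the $\tfrac1k$-regularising term after splitting the factor $\tfrac1k$, and Theorem \ref{tw:4.1} together with the regularity of $\tilde u_t$ for $\ve(u^k_t)+\ve(\tilde u_t)$), concluding by duality of $L^{2r}$ and $L^{\frac{2r}{2r-1}}$. Reading the constitutive relation pointwise and estimating each term directly in $L^{\frac{2r}{2r-1}}$ via the finite-measure embeddings is just a repackaging of the same argument, and your exponent checks and the restriction to $k\ge 1$ for the harmless factor $k^{-1/(2r-1)}$ are sound.
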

\begin{proof}
Let $\varphi \in L^{2r}(0,\T;L^{2r}(\Omega;\S))$. The function $\varphi$ can be used as a test function in \eqref{flowrulek}, hence integrating \eqref{flowrulek} with respect to time we obtain
\begin{equation}
\begin{aligned}
\int_0^{\T}\int_{\Omega}\D^{-1} T^k_{t}\,\varphi\,\di x\,\di\tau&+ \int_0^{\T}\int_{\Omega}\big\{|\dev(T^k)|-\beta(\theta^k+\tilde{\theta})\big\}^{r}_{+}\,\frac{\dev(T^k)}{|\dev(T^k)|}\,\varphi\,\di x\,\di\tau
\\[1ex]
&+\frac{1}{k}\int_0^{\T}\int_{\Omega}|\dev(T^k)|^{2r-1}\,\frac{\dev(T^k)}{|\dev(T^k)|}\,\varphi\,\di x\,\di\tau
\\[1ex]
&=\int_0^{\T}\int_{\Omega}\big(\ve(u^k_{t})+\ve(\tilde{u}_t)\big)\varphi\,\di x\,\di\tau\,,
\end{aligned}
\label{417}
\end{equation}
therefore 
\begin{equation}
\begin{aligned}
\big|&\int_0^{\T}\int_{\Omega}\D^{-1} T^k_{t}\,\varphi\,\di x\,\di\tau\big|\\[1ex]
&\leq   \big\|\big\{|\dev(T^k)|-\beta(\theta^k+\tilde{\theta})\big\}^{r}_{+}\,\frac{\dev(T^k)}{|\dev(T^k)|}\big\|_{L^{\frac{r+1}{r}}(0,\T;L^{\frac{r+1}{r}}(\Omega))}\|\varphi\|_{ L^{2r}(0,\T;L^{2r}(\Omega))}
\\[1ex]
&\hspace{2ex}+\Big(\frac{1}{k}\Big)^{\frac{1}{2r}}\Big(\frac{1}{k}\Big)^{\frac{2r-1}{2r}}\Big\||\dev(T^k)|^{2r-1}\,\frac{\dev(T^k)}{|\dev(T^k)|}\Big\|_{L^{\frac{2r}{2r-1}}(0,\T;L^{\frac{2r}{2r-1}}(\Omega))}\|\varphi\|_{ L^{2r}(0,\T;L^{2r}(\Omega))}
\\[1ex]
&\hspace{4ex}+\|\ve(u^k_{t})+\ve(\tilde{u}_t)\|_{L^{2}(0,\T;L^{2}(\Omega))}\|\varphi\|_{ L^{2r}(0,\T;L^{2r}(\Omega))}\,.
\end{aligned}
\label{418}
\end{equation}
Remark \ref{col:4.6} together with \eqref{418} show that
\begin{equation}
\sup_{\substack{\varphi \in L^{2r}(0,\T;L^{2r}(\Omega;\S))\\[1ex]
\|\varphi\|_{L^{2r}(0,\T;L^{2r}(\Omega;\S))}\leq 1}}\,\,\big|\int_0^{\T}\int_{\Omega}\D^{-1} T^k_{t}\,\varphi\,\di x\,\di\tau\big|
\end{equation}
is bounded independently on $k>0$ and the proof is complete.
\end{proof}
\subsection{Passing to the limit with $k\rightarrow +\infty$}
All the boundednesses from Section $4.1$ lead to (going if needed to the subsequences)
\begin{equation}
\begin{array}{cl}
T^k \rightharpoonup T & \mbox{weakly in }   L^2(0,\T;L^2(\Omega;\S))\,,\\[1ex]
T^k_t \rightharpoonup T_t & \mbox{weakly in }   L^{\frac{2r}{2r-1}}(0,\T;L^{\frac{2r}{2r-1}}(\Omega;\S))\,,\\[1ex]
\dev(T^k) \rightharpoonup \dev(T) & \mbox{weakly in }   L^{r+1}(0,\T;L^{r+1}(\Omega;\SS))\,,\\[1ex]
u^k\rightharpoonup u  &  \mbox{weakly in }   H^1(0,\T;H^1_0(\Omega;\R^3))\,,\\[1ex]
\theta^k \rightharpoonup \theta  &  \mbox{weakly in }   L^q(0,\T;W^{1,q}(\Omega))\,,\\[1ex]
\theta^k \rightarrow \theta  &  \mbox{in }   L^1(0,\T;L^1(\Omega))\,,\\[1ex]
f\big(\TC_{k}(\theta^{k}+\tilde{\theta})\big)\rightarrow f(\theta+\tilde{\theta})&  \mbox{in } L^2(0,\T;L^2(\Omega))\,,\\[1ex]
\frac{1}{k}|\dev(T^k)|^{2r-1}\,\frac{\dev(T^k)}{|\dev(T^k)|} \rightharpoonup 0 & \mbox{weakly in } L^{\frac{2r}{2r-1}}(0,\T;L^{\frac{2r}{2r-1}}(\Omega;\SS)),\\[2ex]
\big\{|\dev(T^k)|-\beta(\theta^k+\tilde{\theta})\big\}^{r}_{+}\,\frac{\dev(T^k)}{|\dev(T^k)|}\rightharpoonup \psi  & \mbox{weakly in } L^{\frac{r+1}{r}}(0,\T;L^{\frac{r+1}{r}}(\Omega;\SS))
\end{array}
\label{weaklimk}
\end{equation}
with $k \rightarrow \infty$. 
\begin{lem}
\label{lem48} 
The weak limit $T_t$ belongs to $L^{\frac{r+1}{r}}(0,\T;L^{\frac{r+1}{r}}(\Omega;\S))$.
\end{lem}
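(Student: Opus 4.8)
The plan is to read off, from the pointwise form of the flow rule \eqref{flowrulek}, the algebraic identity
\begin{equation*}
\D^{-1}T^k_t=\ve(u^k_t)+\ve(\tilde u_t)-\big\{|\dev(T^k)|-\beta(\theta^k+\tilde\theta)\big\}^{r}_{+}\,\frac{\dev(T^k)}{|\dev(T^k)|}-\frac1k|\dev(T^k)|^{2r-1}\,\frac{\dev(T^k)}{|\dev(T^k)|},
\end{equation*}
which holds for almost every $(x,t)\in\Omega\times(0,\T)$ since \eqref{flowrulek} is valid for all admissible test functions. I will then pass to the limit $k\to\infty$ in each summand on the right-hand side, carefully recording the Lebesgue space in which each term lives, and finally apply the bounded linear (pointwise) operator $\D$.

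The key observation is that the first three terms on the right are bounded in $L^{\frac{r+1}{r}}(0,\T;L^{\frac{r+1}{r}}(\Omega;\S))$. Indeed, for $r>1$ one has $\frac{r+1}{r}\in(1,2)$, so on the finite-measure set $\Omega\times(0,\T)$ both the sequence $\{\ve(u^k_t)\}_{k>0}$, bounded in $L^2$ by Theorem \ref{tw:4.1}, and the fixed function $\ve(\tilde u_t)\in L^{\frac{1}{1-\alpha}}(0,\T;L^{\frac{1}{1-\alpha}}(\Omega;\S))$ (recall $\frac{1}{1-\alpha}\in(2,6)$) lie in $L^{\frac{r+1}{r}}$, while the Norton--Hoff term is bounded in $L^{\frac{r+1}{r}}$ by Remark \ref{col:4.6}. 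Moreover, \eqref{weaklimk} gives $\ve(u^k_t)\rightharpoonup\ve(u_t)$ weakly in $L^2$, hence (since $L^{r+1}\subset L^2$ on a finite-measure set) weakly in $L^{\frac{r+1}{r}}$, and $\big\{|\dev(T^k)|-\beta(\theta^k+\tilde\theta)\big\}^{r}_{+}\,\frac{\dev(T^k)}{|\dev(T^k)|}\rightharpoonup\psi$ weakly in $L^{\frac{r+1}{r}}$. Therefore
\begin{equation*}
A^k:=\ve(u^k_t)+\ve(\tilde u_t)-\big\{|\dev(T^k)|-\beta(\theta^k+\tilde\theta)\big\}^{r}_{+}\,\frac{\dev(T^k)}{|\dev(T^k)|}\rightharpoonup A:=\ve(u_t)+\ve(\tilde u_t)-\psi
\end{equation*}
weakly in $L^{\frac{r+1}{r}}(0,\T;L^{\frac{r+1}{r}}(\Omega;\S))$, so in particular $A$ belongs to this space.

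It remains to identify $A$ with $\D^{-1}T_t$. The fourth term $\frac1k|\dev(T^k)|^{2r-1}\,\frac{\dev(T^k)}{|\dev(T^k)|}$ is, by \eqref{weaklimk}, only known to converge weakly to $0$ in $L^{\frac{2r}{2r-1}}(0,\T;L^{\frac{2r}{2r-1}}(\Omega;\SS))$, which is the weaker space in which the identification must be done: since $\frac{2r}{2r-1}<\frac{r+1}{r}$ for $r>1$, the embedding $L^{\frac{r+1}{r}}\hookrightarrow L^{\frac{2r}{2r-1}}$ on $\Omega\times(0,\T)$ shows $A^k\rightharpoonup A$ also weakly in $L^{\frac{2r}{2r-1}}$, and adding the vanishing term yields $\D^{-1}T^k_t\rightharpoonup A$ weakly in $L^{\frac{2r}{2r-1}}(0,\T;L^{\frac{2r}{2r-1}}(\Omega;\S))$. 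On the other hand, Lemma \ref{lem47} together with \eqref{weaklimk} give $T^k_t\rightharpoonup T_t$ in the same space, hence $\D^{-1}T^k_t\rightharpoonup\D^{-1}T_t$ there since $\D^{-1}$ is a bounded linear operator acting pointwise on matrices. By uniqueness of weak limits $\D^{-1}T_t=A\in L^{\frac{r+1}{r}}(0,\T;L^{\frac{r+1}{r}}(\Omega;\S))$, and applying $\D$ we conclude $T_t=\D(\D^{-1}T_t)\in L^{\frac{r+1}{r}}(0,\T;L^{\frac{r+1}{r}}(\Omega;\S))$. The only point requiring care is that the $\tfrac1k$-term vanishes only in the coarser $L^{\frac{2r}{2r-1}}$-topology, so the equality $\D^{-1}T_t=A$ is obtained there; the improved exponent $\frac{r+1}{r}$ is inherited from $A$, whose defining expression does not involve that term at all. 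Everything else is bookkeeping of Lebesgue exponents on the finite-measure domain $\Omega\times(0,\T)$.
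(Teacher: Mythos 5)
Your argument is correct and follows essentially the paper's route: pass to the limit in the flow rule \eqref{flowrulek}, arrive at the a.e.\ identity $\D^{-1}T_t=\ve(u_t)+\ve(\tilde{u}_t)-\psi$, and read the $L^{\frac{r+1}{r}}$-regularity off the right-hand side, whose terms lie in $L^{\frac{r+1}{r}}$ by Theorem \ref{tw:4.1}, the regularity of $\tilde{u}_t$ and Remark \ref{col:4.6}. The paper performs the identification by passing to the limit in the time-integrated weak formulation \eqref{417} with test functions in $L^{2r}(0,\T;L^{2r}(\Omega;\S))$, which is just another packaging of your uniqueness-of-weak-limits step in the coarser space $L^{\frac{2r}{2r-1}}$; the exponent bookkeeping is the same.
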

\begin{proof}
Using \eqref{weaklimk} we can pass to the limit with $k\rightarrow\infty$ in \eqref{417} and  obtain
\begin{equation}
\int_0^{\T}\int_{\Omega}\D^{-1} T_{t}\,\varphi\,\di x\,\di\tau+ \int_0^{\T}\int_{\Omega}\psi\,\dev(\varphi)\,\di x\,\di\tau
=\int_0^{\T}\int_{\Omega}\big(\ve(u_{t})+\ve(\tilde{u}_t)\big)\varphi\,\di x\,\di\tau
\label{421}
\end{equation}
for all $\varphi \in L^{2r}(0,\T;L^{2r}(\Omega;\S))$, which means that
\begin{equation}
\D^{-1} T_{t}(x,t)= -\psi(x,t)+ \ve(u_{t}(x,t))+\ve(\tilde{u}_t(x,t))
\label{422}
\end{equation}
almost everywhere in $\Omega\times(0,\T)$. Regularities of the functions $\psi$, $u_{t}$ and $\tilde{u}_t$ complete the proof.
\end{proof}
\begin{col}
\label{col:4.8}
Formula \eqref{422} yield
\begin{equation}
\begin{split}
\mathrm{tr}(\D^{-1}T_{t}(x,t))&= -\mathrm{tr}(\psi(x,t))+ \mathrm{tr}\big(\ve(u_{t}(x,t))+\ve(\tilde{u}_t(x,t))\big)\\[1ex]
&=\mathrm{tr}\big(\ve(u_{t}(x,t))+\ve(\tilde{u}_t(x,t))\big)
\end{split}
\label{423}
\end{equation}
The regularities of the functions $u_t$ and $\tilde{u}_t$ give $\mathrm{tr}(\D^{-1}T_{t})\in L^2(0,\T;L^2(\Omega;\R^3))$. The properties of the operator $\D$ (we consider the isotropic materials) imply
\begin{equation}
\D^{-1}T_{t}(x,t)T(x,t)= \D^{-1}\mathrm{tr}(T_{t}(x,t))\mathrm{tr}(T(x,t)) +\D^{-1}\dev (T_t(x,t))\dev (T(x,t))\,.
\label{424}
\end{equation}
Integrating \eqref{424} with respect to $\Omega\times (0,t)$ for $t\in (0,\T]$ we get
\begin{equation}
\begin{split}
\int_0^{t}\int_{\Omega}\D^{-1}T_{t}T\,\di x\, \di \tau&= \int_0^{t}\int_{\Omega}\D^{-1}\mathrm{tr}(T_{t})\mathrm{tr}(T)\,\di x\, \di \tau +\int_0^{t}\int_{\Omega}\D^{-1}\dev (T_t)\dev (T)\,\di x\, \di\tau\\[1ex]
&=\int_0^{t}\frac{1}{2}\frac{\di}{\di t}\big( \int_{\Omega}\D^{-1}\mathrm{tr}(T)\mathrm{tr}(T)+ \D^{-1}\dev (T)\dev (T)\,\di x\big)\,\di\tau\\[1ex]
&= \int_0^{t}\frac{1}{2}\frac{\di}{\di t}\big( \int_{\Omega}\D^{-1}T\,T\,\di x\big)\,\di\tau\\[1ex]
&= \int_{\Omega}\D^{-1}T(t)\,T(t)\,\di x-\int_{\Omega}\D^{-1}T(0)\,T(0)\,\di x\,.
\end{split}
\label{425}
\end{equation}
On the other hand, using the formula \eqref{422} we obtain
\begin{equation}
\begin{split}
\int_{\Omega}\D^{-1}T(t)\,T(t)\,\di x-\int_{\Omega}\D^{-1}T(0)\,T(0)\,\di x&= -\int_0^{t}\int_{\Omega}\psi\,\dev(T)\,\di x\,\di\tau\\[1ex]
&\hspace{2ex}+\int_0^{t}\int_{\Omega}\big(\ve(u_{t})+\ve(\tilde{u}_t)\big)T\,\di x\,\di\tau\,.
\end{split}
\label{426}
\end{equation}
\end{col}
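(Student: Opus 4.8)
The plan is to establish the displayed identities \eqref{423}--\eqref{426} in order; the structural fact that drives everything is that for an isotropic $\D$ the inverse $\D^{-1}$ leaves invariant the $L^{2}$-orthogonal splitting of a symmetric matrix into its spherical and deviatoric parts, which is what makes the mismatched integrability exponents of $T$ and $T_{t}$ pair up correctly. First I would take the trace of the pointwise identity \eqref{422}. The field $\psi$ is the weak $L^{\frac{r+1}{r}}(0,\T;L^{\frac{r+1}{r}}(\Omega;\SS))$-limit of the deviatoric fields in \eqref{weaklimk}, and since the trace-free tensors form a weakly closed subspace, $\mathrm{tr}(\psi)=0$ a.e.; hence $\mathrm{tr}(\D^{-1}T_{t})=\mathrm{div}(u_{t})+\mathrm{div}(\tilde u_{t})$, which is \eqref{423}. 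Because $u\in H^{1}(0,\T;H^{1}_{0}(\Omega;\R^{3}))$ and $\tilde u\in W^{1,\frac{1}{1-\alpha}}(0,\T;W^{1,\frac{1}{1-\alpha}}(\Omega;\R^{3}))$ with $\frac{1}{1-\alpha}>2$, the right-hand side lies in $L^{2}(0,\T;L^{2}(\Omega))$, which gives the claimed regularity of $\mathrm{tr}(\D^{-1}T_{t})$ and also shows $\mathrm{tr}(T_{t})\in L^{2}(0,\T;L^{2}(\Omega))$.

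Identity \eqref{424} is then purely algebraic: from \eqref{CE4} one has $\D^{-1}S=\tfrac{1}{2\mu}\dev(S)+\tfrac{1}{3(2\mu+3\lambda)}\mathrm{tr}(S)\,\id$, so $\D^{-1}T_{t}\cdot T$ splits into a purely spherical contribution and a purely deviatoric one, the cross terms vanishing by orthogonality of $\dev(\cdot)$ and the spherical part. To integrate \eqref{424} in time I would treat the two parts separately. For the deviatoric part I would use the Gelfand triple $V=L^{r+1}(\Omega;\SS)\hookrightarrow H=L^{2}(\Omega;\SS)\hookrightarrow V^{\ast}=L^{\frac{r+1}{r}}(\Omega;\SS)$, valid since $r>1$: by \eqref{weaklimk} and Remark~\ref{col:4.7}, $\dev(T)\in L^{r+1}(0,\T;V)$, and by Lemma~\ref{lem48}, $\dev(T_{t})\in L^{\frac{r+1}{r}}(0,\T;V^{\ast})$, so the classical lemma on the absolute continuity of $t\mapsto\tfrac12\|\dev(T)(t)\|_{H}^{2}$ yields $\int_{0}^{t}\int_{\Omega}\D^{-1}\dev(T_{t})\dev(T)\,\di x\,\di\tau=\tfrac12\int_{\Omega}\D^{-1}\dev(T)(t)\dev(T)(t)\,\di x-\tfrac12\int_{\Omega}\D^{-1}\dev(T_{0})\dev(T_{0})\,\di x$. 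For the spherical part the same lemma applies with $V=H=L^{2}(\Omega)$, using $\mathrm{tr}(T)\in L^{\infty}(0,\T;L^{2}(\Omega))$ (Theorem~\ref{tw:4.1}) and $\mathrm{tr}(T_{t})\in L^{2}(0,\T;L^{2}(\Omega))$ from the previous step. Adding the two, recombining the spherical and deviatoric pieces of $T$ into $T$, and using $T(0)=T_{0}$ a.e.\ gives \eqref{425}.

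For \eqref{426} I would multiply the a.e.\ identity \eqref{422} by $T$ and integrate over $\Omega\times(0,t)$: pointwise $\D^{-1}T_{t}\cdot T=-\psi\cdot\dev(T)+(\ve(u_{t})+\ve(\tilde u_{t}))\cdot T$, where $\psi\cdot T=\psi\cdot\dev(T)$ because $\psi$ is deviatoric, and all the resulting integrals converge by H\"older through the conjugate pairings $\dev(T_{t})\in L^{\frac{r+1}{r}}$ against $\dev(T)\in L^{r+1}$, $\mathrm{tr}(T_{t})\in L^{2}$ against $\mathrm{tr}(T)\in L^{2}$, $\psi\in L^{\frac{r+1}{r}}$ against $\dev(T)\in L^{r+1}$, and $\ve(u_{t})+\ve(\tilde u_{t})\in L^{2}$ against $T\in L^{\infty}(0,\T;L^{2})$. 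Substituting the left-hand side from \eqref{425} then yields \eqref{426}.

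I expect the only genuinely delicate point — the ``hard part'' — to be the justification of the time integration by parts, i.e.\ the absolute continuity of $t\mapsto\int_{\Omega}\D^{-1}T(t)\,T(t)\,\di x$ with the expected derivative, since $T$ and $T_{t}$ do not lie in conjugate spaces in the naive $L^{2}$ sense. The resolution is exactly the spherical/deviatoric splitting — legitimate precisely because the material is isotropic, so that $\D^{-1}$ respects it — after which each part sits in a bona fide Gelfand triple and the standard lemma applies directly; a minor additional care is that $T(0)=T_{0}$ must already be known from the passage to the limit carried out in the previous subsection.
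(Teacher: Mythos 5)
Your proposal is correct and follows essentially the same route as the paper: take the trace of \eqref{422} together with $\mathrm{tr}(\psi)=0$, use isotropy to split $\D^{-1}T_t\cdot T$ into spherical and deviatoric parts so that each pairs in conjugate exponents ($L^2$ against $L^2$, and $L^{\frac{r+1}{r}}$ against $L^{r+1}$), integrate in time, and finally pair \eqref{422} with $T$. Your Gelfand-triple (Lions lemma) argument merely makes explicit the integration by parts that the paper performs implicitly; note only that this careful bookkeeping yields a factor $\tfrac12$ in front of the quadratic terms, which the displays \eqref{425}--\eqref{426} drop (a harmless slip of the paper, not a gap in your argument).
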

\noindent
The formula \eqref{426} is crucial to characterize the weak limit $\psi$.
\begin{tw}
\label{lmimsup1}
The following inequality holds for solutions of approximate system
\begin{equation}
\label{limsupmain1}
\limsup_{k\rightarrow\infty} \int_{0}^{t}\int_{\Omega}\big\{|\dev(T^k)|-\beta(\theta^k+\tilde{\theta})\big\}^{r}_{+}\,|\dev(T^k)| \,\di x\, \di \tau\leq
\int_{0}^{t}\int_{\Omega}\psi\, \dev(T) \,\di x\,\di \tau
\end{equation}
and for all $t\in (0,\T]$.
\end{tw}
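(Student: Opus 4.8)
The plan is to mirror the argument of Theorem \ref{lmimsup} (the analogous $\limsup$ estimate at the level $m\to\infty$), replacing the role of the Galerkin equations by the limit equations \eqref{temp3}, \eqref{balancek}, \eqref{flowrulek} and using the energy identity hidden in Corollary \ref{col:4.8}. First I would test \eqref{balancek} by $w=u^k_t$ and \eqref{flowrulek} by $\tau=T^k$, add the two identities and integrate over $(0,t)$; using that $\D^{-1}T^k_t\,T^k=\tfrac12\tfrac{\di}{\di t}\D^{-1}T^k\,T^k$ and that the deviatoric and volumetric parts are orthogonal, this yields
\begin{equation*}
\begin{split}
&\tfrac12\int_{\Omega}\D^{-1}T^k(t)\,T^k(t)\,\di x+\int_0^t\int_{\Omega}\D\ve(u^k_t)\,\ve(u^k_t)\,\di x\,\di\tau\\[1ex]
&\hspace{2ex}+\int_0^t\int_{\Omega}\big\{|\dev(T^k)|-\beta(\theta^k+\tilde\theta)\big\}^r_+\,|\dev(T^k)|\,\di x\,\di\tau+\frac1k\int_0^t\int_{\Omega}|\dev(T^k)|^{2r}\,\di x\,\di\tau\\[1ex]
&=\tfrac12\int_{\Omega}\D^{-1}T_0\,T_0\,\di x+\int_0^t\int_{\Omega}f\big(\TC_k(\tilde\theta+\theta^k)\big)\,\mathrm{div}(u^k_t)\,\di x\,\di\tau+\int_0^t\int_{\Omega}\ve(\tilde u_t)\,T^k\,\di x\,\di\tau\,.
\end{split}
\end{equation*}

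Next I would write down the analogous limit identity: by \eqref{422} (equivalently, testing \eqref{421} by $\varphi=T$), together with \eqref{426} from Corollary \ref{col:4.8}, we have
\begin{equation*}
\tfrac12\int_{\Omega}\D^{-1}T(t)\,T(t)\,\di x+\int_0^t\int_{\Omega}\psi\,\dev(T)\,\di x\,\di\tau=\tfrac12\int_{\Omega}\D^{-1}T_0\,T_0\,\di x+\int_0^t\int_{\Omega}\big(\ve(u_t)+\ve(\tilde u_t)\big)T\,\di x\,\di\tau\,,
\end{equation*}
and testing \eqref{balancek}'s limit form (equation $\eqref{Main1}_1$ in the weak sense, with test $w=u_t$) gives $\int_0^t\!\int_{\Omega}\big(T-f(\theta+\tilde\theta)\id\big)\ve(u_t)+\D\ve(u_t)\ve(u_t)=0$, hence $\int_0^t\!\int_\Omega\D\ve(u_t)\ve(u_t)=\int_0^t\!\int_\Omega f(\theta+\tilde\theta)\,\mathrm{div}\,u_t-\int_0^t\!\int_\Omega T\,\ve(u_t)$. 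Subtracting the two displayed identities and rearranging, one isolates $\int_0^t\!\int_\Omega\psi\,\dev(T)$ on the right-hand side against $\limsup_k$ of the dissipation terms on the left.

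Then I would pass to the limit term by term in the $k$-identity. The initial terms cancel exactly. For $\int_0^t\!\int_\Omega f(\TC_k(\tilde\theta+\theta^k))\,\mathrm{div}\,u^k_t$ I would use the strong convergence $f(\TC_k(\tilde\theta+\theta^k))\to f(\theta+\tilde\theta)$ in $L^2(L^2)$ from Remark \ref{col:4.5} together with $\mathrm{div}\,u^k_t\rightharpoonup\mathrm{div}\,u_t$ weakly in $L^2(L^2)$ (product of strong$\times$weak); for $\int_0^t\!\int_\Omega\ve(\tilde u_t)\,T^k$ I would use $T^k\rightharpoonup T$ weakly in $L^2(L^2)$ against the fixed function $\ve(\tilde u_t)\in L^2(L^2)$; and for $\int_\Omega\D^{-1}T^k(t)\,T^k(t)$ and $\int_0^t\!\int_\Omega\D\ve(u^k_t)\ve(u^k_t)$ I would invoke weak lower semicontinuity of these nonnegative quadratic forms (as in \eqref{lowersemi1}--\eqref{lowersemi2}), which is exactly what turns the $k$-identity into a $\limsup$ inequality after moving these two terms to the other side. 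The nonnegative term $\tfrac1k\int_0^t\!\int_\Omega|\dev(T^k)|^{2r}$ is simply dropped (bounded below by $0$), which only weakens the inequality in the desired direction. Combining everything yields \eqref{limsupmain1}.

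The main obstacle I anticipate is the term $\int_\Omega\D^{-1}T^k(t)\,T^k(t)\,\di x$ evaluated at the fixed time $t$: the convergence $T^k\rightharpoonup T$ is only weak in $L^2(0,\T;L^2(\Omega;\S))$, and $T^k_t$ is bounded only in $L^{\frac{2r}{2r-1}}(L^{\frac{2r}{2r-1}})$, so pointwise-in-time weak lower semicontinuity of $T^k(t)$ is not automatic for \emph{every} $t$. The resolution is to work with the time-integrated quantity or to note that, after integrating \eqref{424} as in \eqref{425}, $t\mapsto\int_\Omega\D^{-1}T(t)\,T(t)\,\di x$ is (absolutely) continuous and the required lower semicontinuity $\liminf_k\int_\Omega\D^{-1}T^k(t)\,T^k(t)\,\di x\ge\int_\Omega\D^{-1}T(t)\,T(t)\,\di x$ holds for a.a. $t$ by the convexity argument already used in \eqref{lowersemi1}; since \eqref{limsupmain1} is only claimed for such $t$, this suffices. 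A secondary point to handle carefully is that the limit momentum equation \eqref{balancek} after $k\to\infty$ must be validated with test function $w=u_t\in L^2(H^1_0)$ — this is legitimate since $u_t$ has exactly the regularity of an admissible test function and all terms in $\eqref{Main1}_1$ lie in dual spaces that pair with it.
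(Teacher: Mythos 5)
Your proposal is correct and follows essentially the same route as the paper: the paper itself only remarks that, thanks to formula \eqref{426} of Corollary \ref{col:4.8} (which supplies the energy identity for the limit functions despite the low regularity of $T_t$), the proof is "almost identical" to that of Theorem \ref{lmimsup}, and your argument is precisely that transcription — the $k$-level energy identity from testing \eqref{balancek} by $u^k_t$ and \eqref{flowrulek} by $T^k$, the limit identity via \eqref{426} and the limit momentum equation tested by $u_t$, then weak lower semicontinuity of the quadratic terms, strong-times-weak convergence for the $f$- and $\ve(\tilde u_t)T^k$-terms, and dropping the nonnegative $\tfrac1k$-term. Your cautionary remarks (a.e.-in-time lower semicontinuity and the admissibility of $u_t$ as a test function) are handled exactly as in the paper's Theorem \ref{lmimsup}, so no gap remains.
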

\noindent
The formula \eqref{426} makes the proof of Theorem \ref{lmimsup1} almost identical to the proof of the Theorem \ref{lmimsup}, thus we decided to skip it.
\begin{lem}
\label{lem:4.4}
The following characterisation holds\\[1ex]
$$\psi(x,t)=\big\{|\dev(T(x,t))|-\beta(\theta(x,t)+\tilde{\theta}(x,t))\big\}^{r}_{+}\frac{\dev(T(x,t))}{|\dev(T(x,t))|}$$
for almost all $(x,t)\in\Omega\times(0,\T)$.
\end{lem}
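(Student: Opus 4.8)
The plan is to identify $\psi$ by a Minty--Browder monotonicity argument, using Theorem \ref{lmimsup1} as the crucial upper bound for the quadratic-type term. Introduce, for $\theta\in\R$ and $S\in\SS$, the field
$$\mathcal A(\theta,S):=\big\{|S|-\beta(\theta+\tilde{\theta})\big\}^{r}_{+}\,\frac{S}{|S|}\,,$$
so that $\mathcal A(\theta,\cdot)$ is the ($S$-)gradient of the convex function $S\mapsto\tfrac{1}{r+1}\big\{|S|-\beta(\theta+\tilde{\theta})\big\}^{r+1}_{+}$. Consequently $\mathcal A(\theta,\cdot)$ is monotone and continuous, and by (C2) (i.e.\ $\beta\ge 0$) it obeys the growth bound $|\mathcal A(\theta,S)|\le|S|^{r}$. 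The sequence appearing in \eqref{flowrulek} is $\mathcal A(\theta^k,\dev(T^k))$, which by \eqref{weaklimk} converges weakly to $\psi$ in $L^{\frac{r+1}{r}}(0,\T;L^{\frac{r+1}{r}}(\Omega;\SS))$.

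First I would use pointwise monotonicity: for each $k>0$ and each $W\in L^{r+1}(0,\T;L^{r+1}(\Omega;\SS))$,
$$\int_{0}^{\T}\!\!\int_{\Omega}\big(\mathcal A(\theta^k,\dev(T^k))-\mathcal A(\theta^k,W)\big)\cdot\big(\dev(T^k)-W\big)\,\di x\,\di t\ \ge\ 0\,.$$
Then I would pass to the limit $k\to\infty$ term by term: the weak convergences $\dev(T^k)\rightharpoonup\dev(T)$ in $L^{r+1}$ and $\mathcal A(\theta^k,\dev(T^k))\rightharpoonup\psi$ in $L^{\frac{r+1}{r}}$ from \eqref{weaklimk} handle the cross terms, provided one first checks that $\mathcal A(\theta^k,W)\to\mathcal A(\theta,W)$ \emph{strongly} in $L^{\frac{r+1}{r}}$ --- this follows from $\theta^k\to\theta$ a.e.\ (Remark \ref{col:4.5}), continuity of $\mathcal A$ in its first slot, and the $k$-uniform domination $|\mathcal A(\theta^k,W)|\le|W|^{r}\in L^{\frac{r+1}{r}}$, via dominated convergence; and for the ``diagonal'' term one only invokes the upper estimate
$$\limsup_{k\to\infty}\int_{0}^{\T}\!\!\int_{\Omega}\mathcal A(\theta^k,\dev(T^k))\cdot\dev(T^k)\,\di x\,\di t\ \le\ \int_{0}^{\T}\!\!\int_{\Omega}\psi\cdot\dev(T)\,\di x\,\di t$$
supplied by Theorem \ref{lmimsup1} (note $\mathcal A(\theta^k,\dev(T^k))\cdot\dev(T^k)=\big\{|\dev(T^k)|-\beta(\theta^k+\tilde{\theta})\big\}^{r}_{+}|\dev(T^k)|$). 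Taking $\limsup$ of the left-hand side of the monotonicity inequality then yields
$$\int_{0}^{\T}\!\!\int_{\Omega}\big(\psi-\mathcal A(\theta,W)\big)\cdot\big(\dev(T)-W\big)\,\di x\,\di t\ \ge\ 0\qquad\text{for every }W\,.$$

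To conclude I would run the usual Minty step: set $W=\dev(T)-\lambda V$ with $\lambda>0$ and arbitrary $V\in L^{r+1}(0,\T;L^{r+1}(\Omega;\SS))$, divide by $\lambda$, and let $\lambda\to 0^{+}$, using continuity of $\mathcal A(\theta,\cdot)$ together with the bound $|\mathcal A(\theta,\dev(T)-\lambda V)|\le\big(|\dev(T)|+|V|\big)^{r}$ (uniform for $\lambda\in(0,1)$) to pass to the limit $\mathcal A(\theta,\dev(T)-\lambda V)\to\mathcal A(\theta,\dev(T))$ in $L^{\frac{r+1}{r}}$. This gives $\int_{0}^{\T}\int_{\Omega}\big(\psi-\mathcal A(\theta,\dev(T))\big)\cdot V\,\di x\,\di t\ge 0$ for all $V$, hence (replacing $V$ by $-V$) equality to $0$ for all $V$, i.e.\ $\psi=\mathcal A(\theta,\dev(T))$ a.e.\ in $\Omega\times(0,\T)$, which is the claim.

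I expect the main obstacle to be the passage to the limit in the diagonal term $\int_{0}^{\T}\int_{\Omega}\mathcal A(\theta^k,\dev(T^k))\cdot\dev(T^k)\,\di x\,\di t$: because the strictly monotone regularising term $\tfrac1k|\dev(T^k)|^{2r-1}\tfrac{\dev(T^k)}{|\dev(T^k)|}$ vanishes as $k\to\infty$, no a.e.\ convergence of $\dev(T^k)$ is available (contrary to the $m\to\infty$ stage), and one must rely precisely on the $\limsup$-inequality of Theorem \ref{lmimsup1} to close the chain. A secondary delicate point is that the nonlinearity is evaluated at $\theta^k$, not at $\theta$, which forces the strong $L^{\frac{r+1}{r}}$-convergence of $\mathcal A(\theta^k,W)$ rather than mere weak convergence; one should also note that only deviatoric parts are tested, so restricting $W$ to $\SS$-valued fields loses nothing.
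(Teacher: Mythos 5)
Your proposal is correct and follows essentially the same route as the paper: monotonicity of the field $G(\theta,\cdot)$ tested against arbitrary $W$, strong $L^{\frac{r+1}{r}}$-convergence of $G(\theta^k,W)$ via the a.e.\ convergence of $\theta^k$ and dominated convergence, the $\limsup$-inequality of Theorem \ref{lmimsup1} for the diagonal term, and the Minty--Browder trick to identify $\psi$. The only difference is cosmetic: you spell out the hemicontinuity step ($W=\dev(T)-\lambda V$, $\lambda\to 0^{+}$) that the paper dismisses as ``standard''.
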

\begin{proof}
Let us introduce
\begin{equation}
\label{427a}
G(\theta, S):=\big\{|\dev(S)|-\beta(\theta+\tilde{\theta})\big\}^{r}_{+}\frac{\dev(S)}{|\dev(S)|}
\end{equation}
for $\theta\in\R$ and $S\in\S$.
The monotonicity of the function $G(\theta,\cdot)$ implies
\begin{equation}
\label{428}
\int_0^{\T}\int_{\Omega}\big(G(\theta^k,\dev(T^k))-G(\theta^k,\dev(W))\big) \big(\dev(T^{k})-\dev(W)\big)\,\di x\,\di\tau\geq 0\\
\end{equation}
for all $W\in L^{r+1}(0,\T;L^{r+1}(\Omega;\S))$. Therefore
\begin{equation}
\label{429}
\begin{split}
\int_0^{\T}\int_{\Omega}&G(\theta^k,\dev(T^k))\dev(T^{k})\,\di x\,\di\tau -\int_0^{\T}\int_{\Omega}G(\theta^k,\dev(T^k)) \dev(W)\,\di x\,\di\tau\\[1ex]
& -\int_0^{\T}\int_{\Omega}G(\theta^k,\dev(W)) \big(\dev(T^{k})-\dev(W)\big)\,\di x\,\di\tau\geq 0\\
\end{split}
\end{equation}
The pointwise convergence of the sequence $\{\theta^k\}_{k>0}$ yields that the sequence $G(\theta^k,\dev(W))$ convergence pointwise to $G(\theta,\dev(W))$. Additionally the sequence $\{G(\theta^k,\dev(W))\}_{k>0}$ is bounded in $L^{\frac{r+1}{r}}(0,\T;L^{\frac{r+1}{r}}(\Omega;\SS))$. The Lebesgue's dominated convergence theorem implies that $G(\theta^k,\dev(W))\rightarrow G(\theta,\dev(W))$ in $L^{\frac{r+1}{r}}(0,\T;L^{\frac{r+1}{r}}(\Omega;\SS))$. This information is sufficient to pass to the limit in the last integral in \eqref{429}, thus taking $\limsup\limits_{k\rightarrow 0}\big(\ref{428}\big)$ and using Lemma \ref{lmimsup1} we deduce
\begin{equation}
\label{430}
\int_0^{\T}\int_{\Omega}\big(\psi-G(\theta,\dev(W))\big) \big(\dev(T)-\dev(W)\big)\,\di x\,\di\tau\geq 0\,.
\end{equation}
Now the standard approach in the Minty-Browder trick finises the proof.
\end{proof}
\begin{lem}
\label{lem:4.5}
The following formula holds
\begin{equation}
\begin{split}
\lim_{k\rightarrow\infty}\,\int_{0}^{\T}\int_{\Omega}\big\{|\dev(T^k)|-\beta(\theta^k+\tilde{\theta})\big\}^{r}_{+}\,|\dev(T^k)| \,\di x\, \di \tau=\qquad\qquad\qquad\nn\\[1ex] 
\int_{0}^{\T}\int_{\Omega}\big\{|\dev(T)|-\beta(\theta+\tilde{\theta})\big\}^{r}_{+}\,|\dev(T)| \,\di x\, \di \tau\,.
\end{split}
\end{equation}
\end{lem}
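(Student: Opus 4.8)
The plan is to obtain the equality by sandwiching the limit between a $\limsup$ estimate coming from the machinery already set up and a matching $\liminf$ estimate coming from weak lower semicontinuity of a convex integral functional. For the upper bound I would simply invoke Theorem~\ref{lmimsup1} at $t=\T$,
$$\limsup_{k\rightarrow\infty}\int_0^{\T}\!\!\int_{\Omega}\big\{|\dev(T^k)|-\beta(\theta^k+\tilde{\theta})\big\}^r_+\,|\dev(T^k)|\,\di x\,\di\tau\leq\int_0^{\T}\!\!\int_{\Omega}\psi\,\dev(T)\,\di x\,\di\tau\,,$$
and then use Lemma~\ref{lem:4.4}, which identifies the right-hand integrand with $\big\{|\dev(T)|-\beta(\theta+\tilde{\theta})\big\}^r_+\,|\dev(T)|$ almost everywhere; this gives $\limsup_{k\rightarrow\infty}(\cdots)\leq\int_0^{\T}\int_{\Omega}\big\{|\dev(T)|-\beta(\theta+\tilde{\theta})\big\}^r_+\,|\dev(T)|\,\di x\,\di\tau$.

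For the lower bound the first step is a convexity observation: for every fixed $c\geq0$ the scalar map $g_c(s):=\{s-c\}^r_+\,s$ is convex and non-decreasing on $[0,\infty)$, since on $(c,\infty)$ a direct computation gives $g_c''(s)=r(s-c)^{r-2}\big((r+1)s-2c\big)\geq0$ (because $(r+1)s-2c>(r-1)c\geq0$ there), while $g_c\equiv0$ on $[0,c]$ and $g_c\in C^1$ as $r>1$, so $g_c'\geq0$ is non-decreasing. Composing the convex non-decreasing $g_c$ with the convex norm $S\mapsto|S|$ shows that $S\mapsto\{|S|-c\}^r_+\,|S|$ is convex on $\SS$, and it is jointly continuous in $(c,S)$.

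Next I would freeze the temperature. From the mean value theorem applied to $c\mapsto\{s-c\}^r_+$ one gets $|g_{c_1}(s)-g_{c_2}(s)|\leq r\,s^r|c_1-c_2|$, so, with all norms over $\Omega\times(0,\T)$ and Hölder's inequality with exponents $\tfrac{r+1}{r}$ and $r+1$,
$$\Big|\int_0^{\T}\!\!\int_{\Omega}\Big(\big\{|\dev(T^k)|-\beta(\theta^k+\tilde{\theta})\big\}^r_+-\big\{|\dev(T^k)|-\beta(\theta+\tilde{\theta})\big\}^r_+\Big)|\dev(T^k)|\,\di x\,\di\tau\Big|\leq r\,\|\dev(T^k)\|^r_{L^{r+1}}\,\big\|\beta(\theta^k+\tilde{\theta})-\beta(\theta+\tilde{\theta})\big\|_{L^{r+1}}\,.$$
By Remark~\ref{col:4.7} the first factor is bounded uniformly in $k$, while $\theta^k\rightarrow\theta$ a.e., assumptions (C1)--(C2) and dominated convergence on the bounded set $\Omega\times(0,\T)$ force the second factor to zero. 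It therefore suffices to bound below $\liminf_{k\rightarrow\infty}\int_0^{\T}\int_{\Omega}\big\{|\dev(T^k)|-\beta(\theta+\tilde{\theta})\big\}^r_+\,|\dev(T^k)|\,\di x\,\di\tau$; this is the integral of the nonnegative normal integrand $S\mapsto\big\{|S|-\beta(\theta(x,t)+\tilde{\theta}(x,t))\big\}^r_+\,|S|$, which is convex in $S$, evaluated along $\dev(T^k)\rightharpoonup\dev(T)$ weakly in $L^{r+1}(\Omega\times(0,\T);\SS)$ (see \eqref{weaklimk}), so weak lower semicontinuity of convex integral functionals gives $\liminf_{k\rightarrow\infty}(\cdots)\geq\int_0^{\T}\int_{\Omega}\big\{|\dev(T)|-\beta(\theta+\tilde{\theta})\big\}^r_+\,|\dev(T)|\,\di x\,\di\tau$.

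Combining the two estimates yields $\limsup_k(\cdots)\leq\int_0^{\T}\int_{\Omega}\big\{|\dev(T)|-\beta(\theta+\tilde{\theta})\big\}^r_+\,|\dev(T)|\,\di x\,\di\tau\leq\liminf_k(\cdots)$, so the limit exists and equals the claimed value. I expect the two genuinely delicate points to be the convexity of $S\mapsto\{|S|-c\}^r_+\,|S|$ and the control of the $\theta$-dependence of the integrand; everything else is bookkeeping with the a priori bounds of Section~4.1. Alternatively, since $\theta^k\rightarrow\theta$ in measure and $\dev(T^k)\rightharpoonup\dev(T)$ in $L^{r+1}$, one may invoke directly an Ioffe-type lower semicontinuity theorem for normal integrands convex in the last variable and skip the freezing step.
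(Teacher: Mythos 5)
Your proposal is correct, and while the upper bound coincides with the paper's (Theorem \ref{lmimsup1} at $t=\T$ together with the identification of $\psi$ from Lemma \ref{lem:4.4}), your lower bound follows a genuinely different route. The paper stays entirely inside the Minty--Browder machinery: it writes the monotonicity inequality \eqref{eq:432} for the vector field $G(\theta^k,\cdot)$ with the fixed competitor $\dev(T)$, uses the strong convergence $G(\theta^k,\dev(T))\rightarrow G(\theta,\dev(T))$ in $L^{\frac{r+1}{r}}$ already established inside the proof of Lemma \ref{lem:4.4} together with the weak convergences \eqref{weaklimk}, and reads off both the lower and the upper estimate from a single chain of inequalities. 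You instead freeze the temperature through the pointwise estimate $|g_{c_1}(s)-g_{c_2}(s)|\leq r\,s^{r}|c_1-c_2|$ (valid since $\beta\geq 0$ by (C2)), control the error by H\"older, the $L^{r+1}$-bound of Remark \ref{col:4.7} and dominated convergence, and then invoke weak lower semicontinuity of the convex, nonnegative normal integrand $S\mapsto\{|S|-\beta(\theta+\tilde{\theta})\}^{r}_{+}|S|$ along $\dev(T^k)\rightharpoonup\dev(T)$ in $L^{r+1}$; your convexity verification of $g_c(s)=\{s-c\}^{r}_{+}s$ for $r>1$, $c\geq 0$, and the monotone-composition argument with the norm, are correct, as is the Ioffe-type shortcut you mention. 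The paper's route buys economy, recycling monotonicity of $G(\theta,\cdot)$ and facts already proved with no extra convexity analysis; your route buys a lower bound that does not use Lemma \ref{lem:4.4} at all (only the upper bound does) and that extends to integrands which are convex in the stress variable without being of the monotone-field form used in the Minty argument. Both arguments close by sandwiching the $\liminf$ and $\limsup$, so the limit exists and equals the claimed value.
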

\begin{proof}
 Using the monotonicity of $G(\theta,\cdot)$ defined in \eqref{427a} we have  
\begin{equation}
\label{eq:432}
\begin{split}
0&\leq\int_0^{\T}\int_{\Omega}\big(G(\theta^k,\dev(T^k))-G(\theta^k,\dev(T))\big) \big(\dev(T^{k})-\dev(T)\big)\,\di x\,\di\tau\\[1ex]
&= \int_0^{\T}\int_{\Omega}G(\theta^k,\dev(T^k)) \big(\dev(T^{k})-\dev(T)\big)\,\di x\,\di\tau\\[1ex]
&\hspace{2ex}-\int_0^{\T}\int_{\Omega}G(\theta^k,\dev(T)) \big(\dev(T^{k})-\dev(T)\big)\,\di x\,\di\tau\,.
\end{split}
\end{equation}
From the proof of the Lemma \ref{lem:4.4} we conclude that the last integral convergences to zero as $k\rightarrow\infty$. Taking the limit superior of \eqref{eq:432} we obtain
 \begin{equation}
\label{eq:4.33}
\begin{split}
0&\leq\limsup_{k\rightarrow\infty} \int_{0}^{\T}\int_{\Omega}\big\{|\dev(T^k)|-\beta(\theta^k+\tilde{\theta})\big\}^{r}_{+}\frac{\dev(T^k)}{|\dev(T^k)|}\big(\dev(T^{k})-\dev(T)\big) \,\di x\, \di \tau\\[1ex]
&=\limsup_{k\rightarrow\infty} \int_{0}^{\T}\int_{\Omega}\big\{|\dev(T^k)|-\beta(\theta^k+\tilde{\theta})\big\}^{r}_{+}|\dev(T^{k})|\,\di x\, \di \tau\\[1ex]
&\hspace{2ex}-\int_{0}^{\T}\int_{\Omega}\big\{|\dev(T)|-\beta(\theta+\tilde{\theta})\big\}^{r}_{+}|\dev(T)|\,\di x\, \di \tau\leq 0\,,
\end{split}
\end{equation}
where the last inequality follows from \eqref{limsupmain1} and the proof is complete.
\end{proof}
\noindent
The above convergences allow us to pass to the limit in the system \eqref{AMain1} with $k\rightarrow \infty$. Let us assume that $\psi_1\in C_0^\infty([0,\T])$, then from \eqref{balancek} we have 
\begin{equation}
\int_0^\T\int_{\Omega}  \big(T^k - f\big(\TC_k(\tilde{\theta}+\theta^k )\big)\id\big) \psi_1(t) \varepsilon(w)\, \di x\,\di t + \int_0^\T\int_{\Omega}\D(\varepsilon(u^k_{t})) \,\psi_1(t)\varepsilon(w)\,\di x\,\di t =0 
\label{balancek1}
\end{equation}
for all $w\in H^1_0(\Omega;\R^3)$. Using the convergences \eqref{weaklimk}  and keeping in mind the removed boundary value problems \eqref{war_brz_u} and \eqref{war_brz_t} we conclude that 
\begin{equation}
\int_0^\T\int_{\Omega}  \big(T - f(\tilde{\theta}+\theta)\id\big) \ve(\psi)\, \di x\,\di t + \int_0^\T\int_{\Omega}\D(\varepsilon(u_{t}+\tilde{u}_t)) \,\varepsilon(\psi)\,\di x\,\di t =\int_0^\T\int_{\Omega} F\,\psi\,\di x\,\di t\,.
\label{balance}
\end{equation}
for every test function $\psi\in C^\infty_0([0,\T];H^1_0(\Omega;\R^3))$. In particular, we also receive \eqref{balancede}. Passage to the limit in the inelastic constitutive equation \eqref{flowrulek} have been actually done in \eqref{421} (see Lemma \ref{lem:4.4}). It is enough to note that since $T_t\in L^{\frac{r+1}{r}}(0,\T;L^{\frac{r+1}{r}}(\Omega;\S))$, we can take the test functions from the space  $L^{r+1}(0,\T;L^{r+1}(\Omega;\S))$. There remains a passing to the limit in heat equation \eqref{temp3}. Let us suppose that $\psi_2\in C_0^\infty([0,\T])$, then \eqref{temp3} yields
\begin{equation}
\label{tempkoncowe}
\begin{split}
&\int_0^\T\int_{\Omega}\theta^k_{t}\, \psi_2(t)v\,\di x\,\di t  + \int_0^\T\int_{\Omega}\nabla\theta^k\, \psi_2(t)\nabla v\, \di x\,\di t\\[1ex]
&\hspace{2ex}+\int_0^\T\int_{\Omega} f\big(\TC_k(\tilde{\theta}+ \theta^k )\big)\mathrm{div} (\tilde{u}_t + u^k_{t})\, \psi_2(t)v \,\di x\,\di t\\[1ex]
=& \int_0^\T\int_{\Omega} \TC_k \big(\big\{|\dev(T^k)|-\beta(\theta^k+\tilde{\theta})\big\}^{r}_{+}|\dev(T^k)|  \big)\,\psi_2(t)v\, \di x\,\di t\,.
 \end{split}
\end{equation}
Again \eqref{weaklimk}, Remark \ref{col:4.5} and Lemma \ref{lem:4.5} give us  
\begin{equation}
\label{tempkoncowe1}
\begin{split}
&\int_0^\T\int_{\Omega}(\theta+\tilde{\theta})_{t}\, \phi\,\di x\,\di t  + \int_0^\T\int_{\Omega}\nabla(\theta+\tilde{\theta})\, \phi\, \di x\,\di t\\[1ex] &\hspace{2ex}+\int_0^\T\int_{\Omega} f\big(\tilde{\theta}+ \theta )\mathrm{div} (\tilde{u}_t + u_{t})\, \phi \,\di x\,\di t\\[1ex]
&= \int_0^\T\int_{\Omega} \big\{|\dev(T)|-\beta(\theta+\tilde{\theta})\big\}^{r}_{+}|\dev(T)|\,\phi\, \di x\,\di t+\int_0^\T\int_{\Omega}g_{\theta}\,\phi\,\di S(x)\,\di t
 \end{split}
\end{equation}
for all $\phi\in C^\infty([0,\T];C^\infty(\overline{\Omega}))$ and the function $\tilde{\theta}$ is the solution of \eqref{war_brz_t}. To complete the proof of the main result, we need to make sense of the initial condition for the temperature function. The following lemma ensures that the initial condition for temperature is satisfied in the standard sense.
\begin{lem}
\label{lem:46}
The sequence $\{\theta^k\}_{k>0}$ converges strongly to $\theta$ in $C([0,\T];L^1(\Omega))$.
\end{lem}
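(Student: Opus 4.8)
The plan is to split the temperature so as to separate the term whose source is only $L^{1}$ (the inelastic dissipation) from the term $f\big(\TC_{k}(\tilde\theta+\theta^{k})\big)\,\mathrm{div}(\tilde u_{t}+u^{k}_{t})$, which will turn out to have a source in $L^{p}$ for some $p>1$. For each fixed $k$ I would let $\zeta^{k}$ be the solution of $\zeta^{k}_{t}-\Delta\zeta^{k}=-f\big(\TC_{k}(\tilde\theta+\theta^{k})\big)\,\mathrm{div}(\tilde u_{t}+u^{k}_{t})=:-B^{k}$ with homogeneous Neumann condition and $\zeta^{k}(0)=0$, and set $\eta^{k}:=\theta^{k}-\zeta^{k}$, so that $\eta^{k}$ solves $\eta^{k}_{t}-\Delta\eta^{k}=\TC_{k}\big(\{|\dev(T^{k})|-\beta(\theta^{k}+\tilde\theta)\}^{r}_{+}|\dev(T^{k})|\big)=:A^{k}$ with homogeneous Neumann condition and $\eta^{k}(0)=\TC_{k}(\theta_{0})$. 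For each fixed $k$ both right-hand sides lie in $L^{2}(\Omega\times(0,\T))$ (the truncations are bounded), hence $\eta^{k},\zeta^{k}\in H^{1}(0,\T;L^{2}(\Omega))\cap L^{2}(0,\T;H^{2}(\Omega))$ exactly as $\theta^{k}$ in Definition \ref{defk}; only the $k$-uniform bounds will differ.

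\textbf{The component $\zeta^{k}$.} By Remark \ref{col:4.5} the sequence $\{f(\TC_{k}(\tilde\theta+\theta^{k}))\}_{k>0}$ is bounded in $L^{r/\alpha}(\Omega\times(0,\T))$ with $r/\alpha>2$, while $\{\mathrm{div}(\tilde u_{t}+u^{k}_{t})\}_{k>0}$ is bounded in $L^{2}(\Omega\times(0,\T))$ by Theorem \ref{tw:4.1} and \eqref{regularity}; H\"older's inequality then gives a bound for $B^{k}$ in $L^{p}(\Omega\times(0,\T))$ for some $p>1$. Classical parabolic $L^{p}$-estimates for the Neumann Laplacian bound $\{\zeta^{k}\}_{k>0}$ in $L^{p}(0,\T;W^{2,p}(\Omega))\cap W^{1,p}(0,\T;L^{p}(\Omega))$, which embeds continuously into $C([0,\T];X)$ for a space $X$ compactly embedded in $L^{1}(\Omega)$, and for which $\|\zeta^{k}(t)-\zeta^{k}(s)\|_{L^{1}(\Omega)}\le|\Omega|^{1-1/p}|t-s|^{1-1/p}\|\zeta^{k}_{t}\|_{L^{p}(\Omega\times(0,\T))}$. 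Hence $\{\zeta^{k}(t)\}_{k}$ is relatively compact in $L^{1}(\Omega)$ for each $t$ and $\{\zeta^{k}\}_{k>0}$ is equicontinuous in $C([0,\T];L^{1}(\Omega))$; by the Arzel\`a--Ascoli theorem it is relatively compact in $C([0,\T];L^{1}(\Omega))$. Since $B^{k}\rightharpoonup f(\theta+\tilde\theta)\,\mathrm{div}(\tilde u_{t}+u_{t})$ weakly in $L^{p}$ and the solution operator is linear and bounded, the limit is uniquely identified as the solution $\zeta$ of the corresponding limit problem, so $\zeta^{k}\to\zeta$ in $C([0,\T];L^{1}(\Omega))$.

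\textbf{The component $\eta^{k}$.} The key step, and also the main obstacle, is to prove $A^{k}\to A:=\{|\dev(T)|-\beta(\theta+\tilde\theta)\}^{r}_{+}|\dev(T)|$ strongly in $L^{1}(\Omega\times(0,\T))$. The monotonicity information already used for Lemmas \ref{lem:4.4}--\ref{lem:4.5} gives $\big(G(\theta^{k},\dev(T^{k}))-G(\theta^{k},\dev(T))\big)\cdot\big(\dev(T^{k})-\dev(T)\big)\to0$ in $L^{1}$, and since for the radial flat-core field $G$ of \eqref{427a} this quantity vanishes only when $\dev(T^{k})=\dev(T)$ or when $|\dev(T^{k})|\le\beta(\theta^{k}+\tilde\theta)$ and $|\dev(T)|\le\beta(\theta+\tilde\theta)$, one obtains $\dev(T^{k})\to\dev(T)$ a.e.\ on the set where the dissipation is active, while on its complement both $\{|\dev(T^{k})|-\beta(\theta^{k}+\tilde\theta)\}^{r}_{+}|\dev(T^{k})|$ and its limit vanish; combining the resulting a.e.\ convergence with the convergence of integrals from Lemma \ref{lem:4.5} and Vitali's theorem yields strong $L^{1}$ convergence, and the truncation $\TC_{k}$ does not change the limit because $L^{1}$-convergent sequences are equi-integrable. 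Granting this, I would invoke the $L^{1}$-accretivity of $-\Delta$ with homogeneous Neumann condition: testing the equation for $\eta^{k}-\eta^{l}$ with $\tfrac1\delta\TC_{\delta}(\eta^{k}-\eta^{l})$, discarding the nonnegative term $\tfrac1\delta\int_{\{|\eta^{k}-\eta^{l}|<\delta\}}|\nabla(\eta^{k}-\eta^{l})|^{2}$ and letting $\delta\to0^{+}$ (with $\varphi_{\delta}(s)=\int_{0}^{s}\tfrac1\delta\TC_{\delta}\nearrow|s|$), one gets, for every $t\in[0,\T]$,
\begin{equation*}
\|\eta^{k}(t)-\eta^{l}(t)\|_{L^{1}(\Omega)}\le\|\TC_{k}(\theta_{0})-\TC_{l}(\theta_{0})\|_{L^{1}(\Omega)}+\|A^{k}-A^{l}\|_{L^{1}(\Omega\times(0,\T))}\longrightarrow0
\end{equation*}
as $k,l\to\infty$. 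Thus $\{\eta^{k}\}_{k>0}$ is Cauchy in $C([0,\T];L^{1}(\Omega))$ and converges there to some $\eta$.

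\textbf{Conclusion.} Adding the two convergences gives $\theta^{k}=\eta^{k}+\zeta^{k}\to\eta+\zeta$ in $C([0,\T];L^{1}(\Omega))$; since $\theta^{k}\to\theta$ in $L^{1}(\Omega\times(0,\T))$ (see \eqref{weaklimk}), necessarily $\eta+\zeta=\theta$, so $\theta\in C([0,\T];L^{1}(\Omega))$ and $\theta^{k}\to\theta$ in $C([0,\T];L^{1}(\Omega))$, which is the assertion; moreover $\theta(0)=\eta(0)+\zeta(0)=\lim_{k\to\infty}\TC_{k}(\theta_{0})=\theta_{0}$ in $L^{1}(\Omega)$, which makes sense of the initial condition for the temperature in Definition \ref{Maindef}. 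The delicate point is the strong $L^{1}$-convergence of the truncated dissipation $A^{k}$, which is genuinely nontrivial because the field $G$ of \eqref{427a} is only degenerately monotone; the remaining ingredients (parabolic $L^{p}$-regularity, the $L^{1}$-contraction of the Neumann heat semigroup, and the Arzel\`a--Ascoli compactness) are routine.
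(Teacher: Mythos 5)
Your proposal is correct in substance but follows a genuinely different route from the paper. The paper proves the Cauchy property of $\{\theta^k\}$ in $C([0,\T];L^1(\Omega))$ directly, following Blanchard: it tests the difference of the heat equations for $\theta^{k}$ and $\theta^{l}$ with $\TC_M(\theta^{k}-\theta^{l})$, and then uses only the a.e. convergence $\theta^k\to\theta$, Egorov's theorem, the convergence $\TC_k(\theta_0)\to\theta_0$ in $L^1(\Omega)$, and the $L^1$-boundedness plus equi-integrability of the full right-hand sides $G^k$ (the growth of $\varphi_M$ then converts the bound on $\int_\Omega\varphi_M(\theta^k-\theta^l)(t)\,\di x$ into an $L^1$ bound); no splitting, no parabolic $L^p$ regularity and no strong convergence of the dissipation is required. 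You instead decompose $\theta^k=\eta^k+\zeta^k$, treat the $f\big(\TC_k(\tilde\theta+\theta^k)\big)\dyw(\tilde u_t+u^k_t)$ part by maximal parabolic $L^p$ regularity and Arzel\`a--Ascoli, and treat the truncated dissipation by $L^1$-contraction of the Neumann heat flow, which forces you to upgrade the information of Lemmas \ref{lem:4.4}--\ref{lem:4.5} to \emph{strong} $L^1$ convergence of $\big\{|\dev(T^k)|-\beta(\theta^k+\tilde\theta)\big\}^r_+|\dev(T^k)|$. That upgrade is indeed available: the computation in \eqref{eq:432}--\eqref{eq:4.33} shows the nonnegative monotonicity gap tends to $0$ in $L^1(\Omega\times(0,\T))$, hence a.e. along a subsequence, and your case analysis of the radial, degenerately monotone field (together with $\theta^k\to\theta$ a.e. and the coercive growth of the gap, which rules out $|\dev(T^k)|\to\infty$) yields a.e. convergence of the dissipation; Scheff\'e's lemma with Lemma \ref{lem:4.5} then gives $L^1$ convergence, and the truncation $\TC_k$ is harmless. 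Two small imprecisions: on the inactive set $\{|\dev(T)|\le\beta(\theta+\tilde\theta)\}$ the approximate dissipation does not \emph{vanish}, it only tends to zero (this is what your argument actually shows), and this key step is only sketched in your write-up, whereas it carries most of the weight of your proof. What your route buys is a stronger by-product — the strong $L^1$ convergence of the dissipation term, which is also what ultimately legitimises the passage to the limit in the heat equation — at the price of heavier machinery; the paper's argument is more economical, needing only equi-integrability of the right-hand side, which it gets from Remark \ref{col:4.5} and Lemma \ref{tw:4.7}/Lemma \ref{lem:4.5}.
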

\begin{proof}
The idea of the proof is taken from Lemma 1 of \cite{Blanchard}. Fix $M>0$, then selecting the test function $v=\TC_M(\theta^{k}-\theta^{l})$ in the difference of the heat equations for $\theta^{k}$ and $\theta^{l}$ (see \eqref{temp3} if necessary) we get
\begin{equation}
\label{431}
\begin{split}
\int_{\Omega}&\varphi_M(\theta^{k}-\theta^{l})(t)\,\di x+ \int_0^{t}\int_{\Omega}|\nabla \TC_M(\theta^{k}-\theta^{l})|^2\,\di x\,\di\tau\\[1ex] 
&=\int_0^{t}\int_{\Omega}\Big(G^{k}-G^{l}\Big)\TC_M(\theta^{k}-\theta^{l})\,\di x\,\di\tau+\int_{\Omega}\varphi_M(\theta^{k}-\theta^{l})(0)\,\di x\,,
\end{split}
\end{equation}
for almost every $t\leq \T$, where
\begin{equation}
\label{eq:4123}
G^i=f\big(\TC_{i}(\theta^{i}+\tilde{\theta})\big) \mathrm{div}\,u^{i}_t + \TC_k\Big(\big\{|\dev(T^i)|-\beta(\theta^i+\tilde{\theta})\big\}^{r}_{+}|\dev(T^i)|\Big)
\end{equation}
for $i=k,\,l$. Observe that the function $\TC_M(\theta^{k}-\theta^{l})\rightarrow 0$ for almost all $(x,\tau)\in\Omega\times(0,t)$ (as $k,\,l\rightarrow \infty$). Additionally,  $\TC_M(\theta^{k}-\theta^{l})$ is uniformly bounded. The Egorov Theorem entails that for every $\nu>0$, there exists a measurable subset $A$ of $\Omega\times(0,t)$ such that $|A|<\nu$ and $\TC_M(\theta^{k}-\theta^{l})$ converges to $0$ uniformly on $\Omega\times(0,t)\setminus A$. Therefore, 
\begin{equation}
\label{eq:4.32}
\begin{split}
\int_{\Omega}&\varphi_M(\theta^{k}-\theta^{l})(t)\,\di x
\leq \int_0^{t}\int_{\Omega}\Big(G^{k}-G^{l}\Big)\TC_M(\theta^{k}-\theta^{l})\,\di x\,\di\tau+\int_{\Omega}\varphi_M(\theta^{k}-\theta^{l})(0)\,\di x\\[1ex]
&=\int_{\Omega\times(0,t)\setminus B} \Big(G^{k}-G^{l}\Big)\TC_M(\theta^{k}-\theta^{l})\,\di x\,\di \tau\\[1ex]
&\hspace{2ex}+\int_{B}\Big(G^{k}-G^{l}\Big)\TC_M(\theta^{k}-\theta^{l})\di x\,\di \tau+ \int_{\Omega}\varphi_M(\theta^{k}-\theta^{l})(0)\,\di x\,\\[1ex]
\end{split}
\end{equation}
From Lemma 4.5, we know that the sequence $G^{k}$ converges weakly in $L^1(0,\T;L^1(\Omega))$ (it is bounded), hence the first integral on the right hand-side of (\ref{eq:4.32}) tends to zero. Notice that  $(\theta^{k}-\theta^{l})(0)=\TC_{k}(\theta_0)-\TC_{l}(\theta_0)\rightarrow 0$ in $L^1(\Omega)$ as $k,\,l\rightarrow \infty$. The functions $G^{k}$ and $G^{l}$ are uniformly integrable, thus the second integral on the right hand-side of (\ref{eq:4.32}) is arbitrarily small.  Moreover, for $r\in\R$
\begin{displaymath}
\varphi_M(r) \geq \left\{ \begin{array}{ll}
\frac{1}{2}|r|^2 & \textrm{if}\quad |r|\leq M\,,\\[1ex]
\frac{1}{2}M|r| & \textrm{if}\quad |r|>M\\
\end{array} \right.
\end{displaymath}
and the sequence $\{\theta^{k}\}_{k>0}$ is a Cauchy sequence in the space $C([0,\T];L^1(\Omega))$.
\end{proof}
\begin{remark}
\label{col:4.10}
From Lemma \ref{lem:46}, we know that the sequence $\theta^k(\cdot,0)$ convergences to $\theta(\cdot,0)$ in $L^1(\Omega)$. It is not hard to observe that $\theta^k(\cdot,0)=\TC_k(\theta_0)\rightarrow \theta_0$ in $L^1(\Omega)$. Hence $\theta_0=\theta(\cdot,0)$ and the initial condition for the temperature function is satisfied in a classical sense. 
\end{remark}
\noindent
The Remark \ref{col:4.10} ends with the proof of the existence of solutions for the considered system \eqref{Main} with the boundary conditions \eqref{BC} and the initial conditions \eqref{IC}.

\bibliographystyle{plain}

\begin{thebibliography}{99}

\bibitem{Alber}
H.-D. Alber.
\newblock {\em Materials with memory}, volume 1682 of {\em Lecture Notes in
  Mathematics}.
\newblock Springer-Verlag, Berlin, 1998.
\newblock Initial-boundary value problems for constitutive equations with
  internal variables.

\bibitem{AlberChelminski1}
H.-D. Alber and K.~Che{\l}mi{\'n}ski.
\newblock Quasistatic problems in viscoplasticity theory. {I}. {M}odels with
  linear hardening.
\newblock In {\em Operator theoretical methods and applications to mathematical
  physics}, volume 147 of {\em Oper. Theory Adv. Appl.}, pages 105--129.
  Birkh\"auser, Basel, 2004.

\bibitem{Alibert1997}
J.~Alibert and G.~Bouchitt\'e.
\newblock Non-uniform integrability and generalized young measures.
\newblock {\em Journal of Convex Analysis}, 4(1):129--147, 1997.

\bibitem{Alt}
H.~W. Alt.
\newblock {\em Lineare Funktionalanalysis}.
\newblock Springer-Verlag, 2006.

\bibitem{AubFran}
J.-P. Aubin and H.~Frankowska.
\newblock {\em Set-valued analysis}.
\newblock Modern Birkh\"auser Classics. Birkh\"auser Boston, Inc., Boston, MA,
  2009.
\newblock Reprint of the 1990 edition [MR1048347].

\bibitem{Bartczak12}
L.~Bartczak.
\newblock Mathematical analysis of a thermo-visco-plastic model with
  {B}odner-{P}artom constitutive equations.
\newblock {\em J. Math. Anal. Appl.}, 385(2):961--974, 2012.

\bibitem{Bartczak14}
L.~Bartczak.
\newblock Analysis of a thermo-viscoplastic model with {L}ipschitz continuous
  constitutive equations.
\newblock {\em Math. Methods Appl. Sci.}, 37(17):2597--2614, 2014.

\bibitem{BarlesRoubicek}
S.~Bartels and T.~Roub{\'{\i}}{\v{c}}ek.
\newblock Thermo-visco-elasticity with rate-independent plasticity in isotropic
  materials undergoing thermal expansion.
\newblock {\em ESAIM Math. Model. Numer. Anal.}, 45(3):477--504, 2011.

\bibitem{Bensoussan1996}
A.~Bensoussan and J.~Frehse.
\newblock Asymptotic behaviour of the time dependent {N}orton-{H}off law in
  plasticity theory and {$H^1$} regularity.
\newblock {\em Comment. Math. Univ. Carolin.}, 37(2):285--304, 1996.

\bibitem{Bensoufrehsebook}
A.~Bensoussan and J.~Frehse.
\newblock {\em Regularity Results for Nonlinear Elliptic Systems and
  Applications}.
\newblock Applied Mathematical Sciences, Vol. 151, Springer Verlag Berlin,
  2002.

\bibitem{Blanchard}
D.~Blanchard.
\newblock Truncations and monotonicity methods for parabolic equations.
\newblock {\em Nonlinear Anal.}, 21(10):725--743, 1993.

\bibitem{BlanchardGuibe97}
D.~Blanchard and O.~Guib{\'e}.
\newblock Existence d'une solution pour un syst\`eme non lin\'eaire en
  thermovisco\'elasticit\'e.
\newblock {\em C. R. Acad. Sci. Paris S\'er. I Math.}, 325(10):1125--1130,
  1997.

\bibitem{BlanchardGuibe00}
D.~Blanchard and O.~Guib{\'e}.
\newblock Existence of a solution for a nonlinear system in
  thermoviscoelasticity.
\newblock {\em Adv. Differential Equations}, 5(10-12):1221--1252, 2000.

\bibitem{BlanchardMurat}
D.~Blanchard and F.~Murat.
\newblock Renormalised solutions of nonlinear parabolic problems with {L}$^1$
  data: existence and uniqueness.
\newblock {\em Proceedings of the Royal Society of Edinburgh: Section A
  Mathematics}, 127:1137--1152, 1997.

\bibitem{BoccardoGallouet}
L.~Boccardo and T.~Gallou\"{e}t.
\newblock Non-linear elliptic and parabolic equations involving measure data.
\newblock {\em Journal of Functional Analysis}, 87(1):149 -- 169, 1989.

\bibitem{Brezis1}
H.~Brezis.
\newblock {\em Functional Analysis, Sobolev Spaces and Partial Differential
  Equations}.
\newblock Universitext. Springer-Verlag New York, 2011.

\bibitem{chegwia2}
K.~Che{\l}mi{\'n}ski and P.~Gwiazda.
\newblock Nonhomogeneous initial-boundary value problems for coercive and
  self-controlling models of monotone type.
\newblock {\em Contin. Mech. Thermodyn.}, 12(4):217--234, 2000.

\bibitem{chegwia1}
K.~Che{\l}mi{\'n}ski and P.~Gwiazda.
\newblock On the model of {B}odner-{P}artom with nonhomogeneous boundary data.
\newblock {\em Math. Nachr.}, 214:5--23, 2000.

\bibitem{chelgwiaz2007}
K.~Che{\l}mi{\'n}ski and P.~Gwiazda.
\newblock Convergence of coercive approximations for strictly monotone
  quasistatic models in inelastic deformation theory.
\newblock {\em Mathematical Methods in the Applied Sciences},
  30(12):1357--1374, 2007.

\bibitem{ChelNeffOwczarek14}
K.~Che{\l}mi\'{n}ski, P.~Neff, and S.~Owczarek.
\newblock The {A}rmstrong-{F}rederick cyclic hardening plasticity model with
  {C}osserat effects.
\newblock {\em J. Diff. Equations}, 256(11):3497--3523, 2014.

\bibitem{ChelNeffOwczarek15}
K.~Che{\l}mi\'{n}ski, P.~Neff, and S.~Owczarek.
\newblock A first regularity result for the {A}rmstrong-{F}rederick cyclic
  hardening plasticity model with {C}osserat effects.
\newblock {\em J. Math. Anal. Appl.}, 423(1):283 -- 304, 2015.

\bibitem{ChelminskiOwczarekthermoI}
K.~Che{\l}mi\'{n}ski and S.~Owczarek.
\newblock Renormalised solutions in thermo-visco-plasticity for a
  {N}orton-{H}off type model. {P}art {I}: {T}he truncated case.
\newblock {\em Nonlinear Analysis: Real World Applications}, 28:140--152, 2016.

\bibitem{ChelminskiOwczarekthermoII}
K.~Che{\l}mi{\'n}ski and S.~Owczarek.
\newblock Renormalised solutions in thermo-visco-plasticity for a
  {N}orton-{H}off type model. {P}art {II}: the limit case.
\newblock {\em Nonlinear Anal. Real World Appl.}, 31:643--660, 2016.

\bibitem{ChelRacke}
K.~Che{\l}mi{\'n}ski and R.~Racke.
\newblock Mathematical analysis of thermoplasticity with linear kinematic
  hardening.
\newblock {\em J. Appl. Anal.}, 12(1):37--57, 2006.

\bibitem{dall96}
A.~Dall'Aglio and L.~Orsina.
\newblock Nonlinear parabolic equations with natural growth conditions and
  {$L^1$} data.
\newblock {\em Nonlinear Anal.}, 27(1):59--73, 1996.

\bibitem{Lionsfr}
G.~Duvaut and J.-L. Lions.
\newblock {\em Les in\'equations en m\'ecanique et en physique}.
\newblock Dunod, Paris, 1972.
\newblock Travaux et Recherches Math{\'e}matiques, No. 21.

\bibitem{GKO}
P.~Gwiazda, F.~Klawe, and S.~Owczarek.
\newblock Thermo-visco-elasticity for {N}orton-{H}off-type models with
  homogeneous thermal expansion.
\newblock {\em Nonlinear Anal. Real World Appl.}, 40:337--360, 2018.

\bibitem{GwiazdaKlaweSwierczewska014}
P.~Gwiazda, F.~Z. Klawe, and A.~{\'S}wierczewska-Gwiazda.
\newblock Thermo-visco-elasticity for the {M}r\'oz model in the framework of
  thermodynamically complete systems.
\newblock {\em Discrete Contin. Dyn. Syst. Ser. S}, 7(5):981--991, 2014.

\bibitem{GKS15}
P.~Gwiazda, F.~Z. Klawe, and A.~\'Swierczewska-Gwiazda.
\newblock Thermo-visco-elasticity for {N}orton-{H}off-type models.
\newblock {\em Nonlinear Analysis: Real World Applications}, 26:199 -- 228,
  2015.

\bibitem{GWIAZDA2005923}
P.~Gwiazda and A.~{\'S}wierczewska.
\newblock Large eddy simulation turbulence model with young measures.
\newblock {\em Applied Mathematics Letters}, 18(8):923--929, 2005.

\bibitem{Haupt}
P.~Haupt.
\newblock {\em Continuum mechanics and theory of materials}.
\newblock Advanced Texts in Physics. Springer-Verlag, Berlin, 2000.
\newblock Translated from the German by Joan A. Kurth.

\bibitem{HERZOGtermoplast}
R.~Herzog, C.~Meyer, and A.~St\"otzner.
\newblock Existence of solutions of a thermoviscoplastic model and associated
  optimal control problems.
\newblock {\em Nonlinear Analysis: Real World Applications}, 35:75--101, 2017.

\bibitem{Iosofonea}
I.~Ionescu and M.~Sofonea.
\newblock {\em Functional and Numerical Methods in Viscoplasticity}.
\newblock Oxford mathematical monographs. Oxford University Press, 1993.

\bibitem{tve-Orlicz}
F.~Z. Klawe.
\newblock Thermo-visco-elasticity for models with growth conditions in {O}rlicz
  spaces.
\newblock {\em Topol. Methods Nonlinear Anal.}, 43(2):297 -- 322, 2016.

\bibitem{KlaweOwczar20}
F.~Z. Klawe and S.~Owczarek.
\newblock Thermo-visco-elasticity for models with non-homogeneous thermal
  expansion and with growth conditions in orlicz spaces.
\newblock {\em Submitted to: Nonlinear Analysis: Theory, Methods and
  Applications}, 2020.

\bibitem{maleknecas}
J.~M\'alek, J.~Ne\v{c}as, M.~Rokyta, and M.~R\r{u}\v{z}i{c}ka.
\newblock {\em Weak and measure-valued solutions to evolutionary PDEs}.
\newblock Chapman \& Hall, London, 1996.

\bibitem{Mises1913}
R.~v. Mises.
\newblock Mechanik der festen {K}\"orper im plastisch- deformablen {Z}ustand.
\newblock {\em Nachrichten von der Gesellschaft der Wissenschaften zu
  G\"ottingen, Mathematisch-Physikalische Klasse}, 1913:582--592, 1913.

\bibitem{MUller1999}
S.~M{\"u}ller.
\newblock {\em Variational models for microstructure and phase transitions},
  pages 85--210.
\newblock Springer Berlin Heidelberg, Berlin, Heidelberg, 1999.

\bibitem{owcz2}
S.~Owczarek.
\newblock Convergence of a monotonisation procedure for a non-monotone
  quasi-static model in poroplasticity.
\newblock {\em J. Math. Anal. Appl.}, 364(2):1541--1563, 2010.

\bibitem{barowcz2}
S.~Owczarek and L.~Bartczak.
\newblock On renormalized solutions for thermomechanical problems in perfect
  plasticity with damping forces.
\newblock {\em Math. Mech. Solids}, 24(4):1030--1053, 2019.

\bibitem{KlaweOwczarek}
S.~Owczarek and F.~Z. Klawe.
\newblock Thermo-visco-elasticity for {N}orton-{H}off-type models with
  {C}osserat effects.
\newblock {\em Math. Meth. Appl. Sciences}, 39:4629--4641, 2016.

\bibitem{Roubicekbook}
T.~Roub{\'{\i}}{\v{c}}ek.
\newblock {\em Nonlinear partial differential equations with applications},
  volume 153 of {\em International Series of Numerical Mathematics}.
\newblock Birkh\"auser Verlag, Basel, 2005.

\bibitem{Roubicek}
T.~Roub{\'{\i}}{\v{c}}ek.
\newblock Thermodynamics of perfect plasticity.
\newblock {\em Discrete Contin. Dyn. Syst. Ser. S}, 6(1):193--214, 2013.

\bibitem{RoubicekL1}
T.~Roub\'{\i}\v{c}ek.
\newblock Thermo-visco-elasticity at small strains with {$L^1$}-data.
\newblock {\em Quart. Appl. Math.}, 67(1):47--71, 2009.

\bibitem{strauss}
W.~Strauss.
\newblock {\em Partial differential equations}.
\newblock Wiley, 2007.

\bibitem{temam}
R.~Temam.
\newblock {\em Probl\`emes math\'ematiques en plasticit\'e}, volume~12 of {\em
  M\'ethodes Math\'ematiques de l'Informatique [Mathematical Methods of
  Information Science]}.
\newblock Gauthier-Villars, Montrouge, 1983.

\bibitem{temam1}
R.~Temam.
\newblock A generalized {N}orton-{H}off model and the prandtl-reuss law of
  plasticity.
\newblock {\em Archive for Rational Mechanics and Analysis}, 95(2):137--183,
  1986.

\bibitem{Valent}
T.~Valent.
\newblock {\em Boundary Value Problems of Finite Elasticity: Local Theorems on
  Existence, Uniqueness, and Analytic Dependence on Data}.
\newblock Springer Publishing Company, Incorporated, 1st edition, 2011.

\bibitem{zeidlerB}
E.~Zeidler.
\newblock {\em Nonlinear functional analysis and its applications. {II}/{B}}.
\newblock Springer-Verlag, New York, 1990.
\newblock Nonlinear monotone operators, Translated from the German by the
  author and Leo F. Boron.
\end{thebibliography}
\begin{footnotesize}

\end{footnotesize}
\end{document}